\newif\ifpersonal
\newcommand{\ostar}{\mathbin{\mathpalette\make@circled\star}}
\newcommand{\make@circled}[2]{%
	\ooalign{$\m@th#1\smallbigcirc{#1}$\cr\hidewidth$\m@th#1#2$\hidewidth\cr}%
}
\newcommand{\smallbigcirc}[1]{%
	\vcenter{\hbox{\scalebox{0.77778}{$\m@th#1\bigcirc$}}}%
}
\newcommand{\caldararu}{{C\u{a}ld\u{a}raru}}
\newcommand{\inertia}{\mathsf I}
\newcommand{\loopstack}{\mathscr{L}}
\newcommand{\dSt}{\mathsf{dSt}}
\newcommand{\hatZ}{\widehat{\mathbb Z}}
\newcommand{\hatcircle}{\widehat{S}^1}
\newcommand{\BCr}{\mathsf{BC}_r}
\newcommand{\BGa}{\mathsf{B}\bbG_{a}}
\newcommand{\hatBGa}{\mathsf{B}\widehat{\bbG}_{\mathsf a}}
\newcommand{\inertiaDM}{\inertia^{\mathsf{DM}}}
\newcommand{\inertiarth}{\inertia^{(r)}}
\newcommand{\HC}{\mathsf{HC}}
\newcommand{\HH}{\mathsf{HH}}
\newcommand{\HP}{\mathsf{HP}}
\newcommand{\HN}{\mathsf{HN}}
\newcommand{\DR}{\mathsf{DR}}
\newcommand{\aff}{\mathsf{aff}}
\newcommand{\hataff}{\widehat{\mathsf{aff}}}
\newcommand{\piDM}{\pi^{\mathsf{DM}}}
\newcommand{\QCoh}{\mathsf{QCoh}}
\numberwithin{equation}{subsection}
\theoremstyle{plain}
\newtheorem{thm}{Theorem}[section]
\newtheorem{lem}[thm]{Lemma}
\newtheorem{prop}[thm]{Proposition}
\newtheorem{cor}[thm]{Corollary}
\theoremstyle{definition}
\newtheorem{defin}[thm]{Definition}
\newtheorem{notation}[thm]{Notation}
\newtheorem{eg}[thm]{Example}
\newtheorem{rem}[thm]{Remark}
\newtheorem{variant}[thm]{Variant}
\newtheorem{recollection}[thm]{Recollection}
\newtheorem{construction}[thm]{Construction}
\newcommand{\personal}[1]{\textcolor[rgb]{0,0,1}{(Personal: #1)}}
\newcommand{\discussion}[1]{\textcolor{violet}{(Discussion: #1)}}
\newcommand{\personal}[1]{\ignorespaces}
\newcommand{\discussion}[1]{\ignorespaces}
\newcommand{\C}{\mathbb C}
\newcommand{\Z}{\mathbb Z}
\newcommand{\cC}{\mathcal C}
\newcommand{\cF}{\mathcal F}
\newcommand{\cG}{\mathcal G}
\newcommand{\cO}{\mathcal O}
\newcommand{\cX}{\mathcal X}
\DeclareFontFamily{U}{BOONDOX-calo}{\skewchar\font=45 }
\DeclareFontShape{U}{BOONDOX-calo}{m}{n}{<-> s*[1.05] BOONDOX-r-calo}{}
\DeclareFontShape{U}{BOONDOX-calo}{b}{n}{<-> s*[1.05] BOONDOX-b-calo}{}
\DeclareMathAlphabet{\mathcalboondox}{U}{BOONDOX-calo}{m}{n}
\newcommand{\bbD}{\mathbb D}
\newcommand{\bbG}{\mathbb G}
\newcommand{\bbL}{\mathbb L}
\let\save@mathaccent\mathaccent
\newcommand*\if@single[3]{%
	\setbox0\hbox{${\mathaccent"0362{#1}}^H$}%
	\setbox2\hbox{${\mathaccent"0362{\kern0pt#1}}^H$}%
	\ifdim\ht0=\ht2 #3\else #2\fi
}
\newcommand*\rel@kern[1]{\kern#1\dimexpr\macc@kerna}
\newcommand*\widebar[1]{\@ifnextchar^{{\wide@bar{#1}{0}}}{\wide@bar{#1}{1}}}
\newcommand*\wide@bar[2]{\if@single{#1}{\wide@bar@{#1}{#2}{1}}{\wide@bar@{#1}{#2}{2}}}
\newcommand*\wide@bar@[3]{%
	\begingroup
	\def\mathaccent##1##2{%
		\let\mathaccent\save@mathaccent
		\if#32 \let\macc@nucleus\first@char \fi
		\setbox\z@\hbox{$\macc@style{\macc@nucleus}_{}$}%
		\setbox\tw@\hbox{$\macc@style{\macc@nucleus}{}_{}$}%
		\dimen@\wd\tw@
		\advance\dimen@-\wd\z@
		\divide\dimen@ 3
		\@tempdima\wd\tw@
		\advance\@tempdima-\scriptspace
		\divide\@tempdima 10
		\advance\dimen@-\@tempdima
		\ifdim\dimen@>\z@ \dimen@0pt\fi
		\rel@kern{0.6}\kern-\dimen@
		\if#31
		\overline{\rel@kern{-0.6}\kern\dimen@\macc@nucleus\rel@kern{0.4}\kern\dimen@}%
		\advance\dimen@0.4\dimexpr\macc@kerna
		\let\final@kern#2%
		\ifdim\dimen@<\z@ \let\final@kern1\fi
		\if\final@kern1 \kern-\dimen@\fi
		\else
		\overline{\rel@kern{-0.6}\kern\dimen@#1}%
		\fi
	}%
	\macc@depth\@ne
	\let\math@bgroup\@empty \let\math@egroup\macc@set@skewchar
	\mathsurround\z@ \frozen@everymath{\mathgroup\macc@group\relax}%
	\macc@set@skewchar\relax
	\let\mathaccentV\macc@nested@a
	\if#31
	\macc@nested@a\relax111{#1}%
	\else
	\def\gobble@till@marker##1\endmarker{}%
	\futurelet\first@char\gobble@till@marker#1\endmarker
	\ifcat\noexpand\first@char A\else
	\def\first@char{}%
	\fi
	\macc@nested@a\relax111{\first@char}%
	\fi
	\endgroup
}
\newcommand{\St}{\mathsf{St}}
\newcommand{\Aff}{\mathrm{Aff}}
\newcommand{\trunc}{\mathrm{t}_0}
\newcommand{\CAlg}{\mathrm{CAlg}}
\newcommand{\dAff}{\mathrm{dAff}}
\newcommand{\bfMap}{\mathbf{Map}}
\newcommand{\Perf}{\mathrm{Perf}}
\newcommand{\dR}{\mathrm{dR}}
\newcommand{\ddR}{d_{\mathrm{dR}}}
\DeclareMathOperator{\Fil}{Fil}
\newcommand{\PrLR}{\categ{Pr}^{\kern0.05em\operatorname{L}, \operatorname{R}}}
\newcommand{\Lie}{\mathsf{Lie}}
\newcommand{\et}{_\mathrm{\acute{e}t}}
\newcommand{\ev}{\mathrm{ev}}
\newcommand{\id}{\mathrm{id}}
\newcommand{\op}{^\mathrm{op}}
\newcommand{\DM}{Deligne--Mumford\xspace}
\tikzset{
  closed/.style = {decoration = {markings, mark = at position 0.5 with { \node[transform shape, xscale = .8, yscale=.4] {/}; } }, postaction = {decorate} },
  open/.style = {decoration = {markings, mark = at position 0.5 with { \node[transform shape, scale = .7] {$\circ$}; } }, postaction = {decorate} }
}
\DeclareMathOperator{\cofib}{cofib}
\DeclareMathOperator{\Fun}{Fun}
\DeclareMathOperator{\Map}{Map}
\DeclareMathOperator{\Spec}{Spec}
\DeclareMathOperator{\Sym}{Sym}
\DeclareMathOperator*{\colim}{colim}
\newcommand{\categ}[1]{\textbf{\textup{#1}}}
\newcommand{\Spc}{\categ{Spc}}
\newcommand{\PrR}{\categ{Pr}^{\kern0.05em\operatorname{R}}}
\newcommand{\PrL}{\categ{Pr}^{\kern0.05em\operatorname{L}}}
\newcommand{\PrLomega}{\categ{Pr}^{\kern0.05em\operatorname{L},\omega}}
\newcommand{\PrLkappa}{\categ{Pr}^{\kern0.05em\operatorname{L},\kappa}}
\newcommand{\PrLRomega}{\categ{Pr}^{\kern0.05em\operatorname{L}, \operatorname{R},\omega}}
\newcommand{\PrLotimes}{\categ{Pr}^{\kern0.05em\operatorname{L},\otimes}}
\newcommand{\PrLat}{\categ{Pr}^{\kern0.05em\operatorname{L}, \operatorname{at}}}
\newcommand{\Mod}{\mathsf{Mod}}
\DeclareMathOperator*{\flim}{``lim''}
\newcommand{\Ind}{\mathrm{Ind}}
\newcommand{\Pro}{\mathrm{Pro}}
\begin{document}

\title{Hochschild--Kostant--Rosenberg isomorphism for derived Deligne--Mumford stacks}

\author{Lie Fu}
\address{Lie Fu, Institut de Recherche Mathématique Avancée, 7 Rue René Descartes, 67000 Strasbourg, France}
\email{lie.fu@math.unistra.fr}

\author{Mauro Porta}
\address{Mauro Porta, Institut de Recherche Mathématique Avancée and Institut Universitaire de France (IUF), 7 Rue René Descartes, 67000 Strasbourg, France}
\email{porta@math.unistra.fr}

\author{Sarah Scherotzke}
\address{Sarah Scherotzke, Department of Mathematics, Universite du Luxembourg, 6 Av. de la Fonte, 4364 Esch-sur-Alzette, Luxembourg}
\email{sarah.scherotzke@uni.lu}

\author{Nicol\`o Sibilla}
\address{Nicol\`o Sibilla, SISSA, Via Bonomea 265, 34136 Trieste TS, Italy}
\email{nsibilla@sissa.it}

\subjclass[2020]{}
\keywords{}

\begin{abstract}
We prove a Hochschild--Konstant--Rosenberg (HKR) theorem for arbitrary derived Deligne--Mumford (DM) stacks, extending the results of Arinkin-\caldararu-Hablicsek in the smooth, global quotient case, although with different methods.
To formulate our result, we introduce the notion of \emph{orbifold inertia} stack of a derived DM stack; this supplies a finely tuned derived enhancement of the classical inertia stack, which does not always coincide with the classical truncation of the free loop space. We show that, in characteristic $0$, given a derived DM stack, the shifted tangent bundle of its orbifold inertia stack is equivalent to its free loop space. This yields a canonical HKR isomorphism of algebras between the Hochschild homology of a derived DM stack and the cohomology of differential forms on its orbifold inertia stack. Moreover, this isomorphism intertwines the natural circle action and the de Rham differential. Similarly, HKR theorems for derived DM stacks are established for Hochschild cohomology, cyclic homology, negative cyclic homology and periodic cyclic homology. As applications, we provide a rich supply of computations of Hochschild homology and Hochschild cohomology of interesting derived DM stacks, for instance, weighted projective lines, root stacks, quotients by algebraic groups, mapping stacks,    etc.

\end{abstract}

\maketitle

\tableofcontents

\section{Introduction}

In this article, we establish the Hochschild--Konstant--Rosenberg (HKR) equivalence for derived Deligne--Mumford (DM) stacks over a field of characteristic $0$, previously known only in the smooth global quotient case thanks to the work of Arinkin--\caldararu--Hablicsek \cite{Arinkin_Caldararu_Hablicsek}.
Our result generalizes theirs in three directions: (1) we allow arbitrarily singular and even derived \DM stacks; (2) we go beyond the global quotient case; (3) we establish the compatibility between the natural circle action and the de Rham differential operator.
Along the way, we introduce a derived enhancement of the classically defined inertia stack, named \emph{orbifold inertia} in this paper, and show that it coincides with the classical inertia for smooth DM stacks and can have a non-trivial derived structure already for underived singular DM stacks.


\subsection{Historical context.}

The Hochschild homology can be defined for any associative algebra $A$, as the derived tensor product of $A$ with itself as an $(A, A)$-bimodule:
$$
\HH_*(A) \simeq  A \otimes_{A \otimes A^{\mathrm{op}}}^\mathrm{L} A\ .
$$ 
In their seminal 1962 paper \cite{HKR}, Hochschild, Konstant and Rosenberg proved that, 
the Hochschild homology of a \emph{smooth commutative} algebra $A$ over a field of characteristic $0$ is equivalent to its graded  algebra of differential forms:
\begin{equation*}
    \HH_*(A) \simeq \bigoplus_i\Omega^i_{A}\ .
\end{equation*}
The HKR  theorem shows in particular that the algebra of differential forms, a subtle geometric invariant, can be computed in terms of a general construction in homological algebra which makes sense even outside the commutative world. The HKR theorem is, from this perspective, one of the cornerstones  of  non-commutative geometry. In the mid-1980's, Connes \cite{connes1985non} and Tsygan \cite{tsygan1983homology} discovered that the de Rham differential operator can also be recovered purely in terms  of homological algebra, as a manifestation of a natural $S^1$-action on 
$\HH_*(A)$. This finding opened the way to many subsequent developments, showing     
 that essentially the whole calculus of differential forms and polyvector fields can be transported to the non-commutative setting.

\medskip

The HKR theorem can be extended to the global geometric case. 
In \cite{GellerWeibel-EtaleDescentHH}, the definition of the Hochschild homology is extended to all schemes.
The classical formulation of the HKR equivalence in this geometric setting, due to Swan \cite[Corollary 2.6]{Swan-HH}, states that if $X$ is a smooth quasi-projective scheme over a field of characteristic zero, its Hochschild homology is equivalent to the hyper-cohomology of differential forms
\begin{equation}
\label{globalhkr}
\HH_{- i }(X) \simeq \bigoplus_{q-p= i }  H^q(X, \Omega^p_X) \ .
\end{equation}
The smoothness assumption can be dropped, at the price of working with the derived exterior powers of the cotangent complex, as first shown in the work of André \cite{Andre_Homologie} and Quillen \cite{Quillen_Homology_of_commutative_rings}. 
Within derived algebraic geometry, it is possible to reinterpret Hochschild homology in terms of a basic geometric construction, the \textit{free loop space}. If $X$ is a derived stack, its free loop space is the mapping stack
\[ \loopstack X\coloneqq\bfMap(S^1, X) \ , \]
and the Hochschild homology of $X$ is equivalent to the algebra of (derived) functions on $\loopstack X$\ :
$$
\HH_{*}(X) \simeq \cO(\loopstack X) \ .
$$
As first observed by Ben--Zvi and Nadler in \cite{BenZvi_Nadler_Loop_spaces_and_connections}, the equivalence  \eqref{globalhkr} can be understood as the  linearization of a more fundamental equivalence of stacks. In characteristic $0$, if $X$ is a scheme there is a canonical equivalence 
\begin{equation}
\label{exp}
\mathsf{exp} \colon \mathsf T[-1] X \to \loopstack X\ ,
\end{equation}
where $\mathsf T[-1] X\coloneqq\Spec_X\left(\Sym_{\mathcal{O}_X}\mathbb{L}_X[1]\right)$ is (the total space of) the shifted tangent complex of $X$.
It is equally possible to understand the de Rham differential and compare it with the $S^1$-action on $\loopstack X$ in this setting, see \cite{Toen_Vezzosi_S1_algebras}.
The cohomology of the structure sheaf of $\mathsf T[-1] X$ computes the right-hand side of (\ref{globalhkr}); thus the HKR isomorphism \eqref{globalhkr} can be obtained from (\ref{exp}) by taking global functions.
Furthermore, in \cite{BenZvi_Nadler_Loop_spaces_and_connections} it was also proven that \eqref{exp}, despite being well-defined for derived Artin stacks, it is typically \textit{not} an equivalence; rather it is an equivalence only on formal completions (along the zero section on the source and at constant loops on the target).

\medskip

In between schemes and Artin stacks, there are \DM stacks.
Once again, the map \eqref{exp} cannot be an equivalence in general: already at the truncated level, the right hand side decomposes into connected components, known as \emph{twisted sectors}, whereas the left hand side only knows information on the principal sector.
Intuitively, this phenomenon arises because specifying a map $S^1 \to X$ amounts to choose a point $x \in X$ together with monodromy data at $x$, i.e.\ a conjugacy class in the isotropy group of $x$; when $X$ is a \DM stack, isotropy groups are finite, and monodromy becomes a discrete parameter.

\medskip

Nevertheless, it was shown by Arinkin--\caldararu--Hablicsek in \cite{Arinkin_Caldararu_Hablicsek} that the situation is not as hopeless as it might seem.
Restricting their attention to \DM stacks of the form
$$X = [Y/G] \ , $$
where $G$ is a finite group acting on a smooth (hence underived) scheme $Y$, they showed that $\HH_*(X)$  can be computed in terms of differential forms on the \emph{inertia stack} of $X$. Recall that the inertia stack $\inertia X$ is the self-intersection of the diagonal in the category of classical (i.e.~underived) stacks, 
$$
\inertia X = X \times^{\mathrm{cl}}_{X \times X} X \ .
$$ 
Note that, since $X$ is underived, the inertia is equivalent to the classical truncation of the free loop space
\begin{equation}
\label{inertiatruncation}
\inertia X \simeq \trunc(\loopstack X)\ .
\end{equation}
Additionally, since $X$ is a global quotient, the inertia can be easily computed via the following formula
\begin{equation}
\label{inertiaglobal}
\inertia X \simeq \bigsqcup_{g \in G/G} [Y^g/Z(g)]\ .
\end{equation}
Here $G/G$ is the set of conjugacy classes of $G$, $Y^g$ denotes the classical locus of points in $Y$ fixed under the action of $g$, and $Z(g)$ is the centralizer of $g$.  The main result of \cite{Arinkin_Caldararu_Hablicsek} is the construction of a canonical equivalence, for $X$ of the special form $[Y/G]$ as above,
$$
\mathsf{exp}: \mathsf T[-1]\inertia X \to \loopstack X\ .
$$
After linearizing by taking the global sections of the structure sheaf, one obtains the equivalence 
\begin{equation}
\label{globalhkr3}
\HH_{-i}([Y/G]) \simeq  \bigoplus_{g\in G/G} \bigoplus_{q-p=i}H^q(Y^g, \Omega_{Y^g}^p)^{Z(g)}\ ,
\end{equation}
which is a generalization of the global HKR theorem for schemes \eqref{globalhkr}.

\subsection{Main results}

Our main goal is to establish the HKR equivalence for \textit{all} derived DM stacks in characteristic $0$. We do not require the stacks to be smooth, or to admit a presentation as global quotients under the action of a finite or even algebraic group. In this respect, our result is new also in the setting of classical DM stacks. Our methods are quite different from the ones in \cite{Arinkin_Caldararu_Hablicsek}, which relied on the construction of a splitting of the derived self-intersection of the diagonal. The extra generality in which we place ourselves in makes that approach insufficient.

\medskip

In fact, the very definition of the inertia stack of a \emph{derived} DM stack is a non-trivial construction that we propose in this paper.
Notice that for general derived  DM stacks equivalences (\ref{inertiatruncation}) and (\ref{inertiaglobal}) are no longer available, and it is therefore a priori unclear how to define $\inertia X$ at this level of generality.
To formulate our definition, let $\mathsf C_r$ be the cyclic group of order $r$.

\begin{defin}
\label{intromain1}
    Let $X$ be a derived DM stack. The \emph{orbifold inertia}  stack of $X$  is defined as  
    \[ \inertiaDM X \coloneqq \colim_r \bfMap(\BCr, X) \ , \]
    where $\bfMap$ stands for the mapping stack of derived stacks, the colimit is taken in the category of derived stacks and over positive integers with respect to the divisibility relation.
\end{defin}

This definition is clearly inspired by the work of Abramovich, Graber and Vistoli \cite{Abramovich_GW_theory_for_DM_stacks}, and it has been kindly suggested to us by Charanya Ravi.
After the first version of this paper became available, the above notion has been considered by Kinjo \cite{Kinjo_Multiplicative_dimensional_reduction} for Artin stacks as well (under the name of \emph{torsion loop stack}), proving a multiplicative version of the dimensional reduction theorem.

\medskip

We list below the main features of this construction:
\begin{enumerate}\itemsep=0.2cm
    \item There are isomorphisms of classical truncations:
    \[ \trunc(\inertiaDM X) \simeq \trunc(\loopstack X) \simeq \inertia (\trunc(X))
    \ . \]
    See \Cref{thm:basics_inertia}.
    
    \item If $X$ is a classical smooth (hence underived) DM stack, then $\inertiaDM X$ coincides with the classical inertia stack $\inertia X$, hence it is in particular underived. See \Cref{cor:InertiaDM-Smooth}.

    \item If $X \simeq [Y/G]$ is a derived global quotient DM stack, where $G$ is a finite group acting on a derived scheme $Y$, there are equivalences 
    \begin{equation} \label{inertiaglobalderived}
        \inertiaDM X \simeq \bigsqcup_{g \in G/G} [Y^g/Z(g)]\simeq \left[\left(\bigsqcup_{g\in G}Y^g \right)/ G\right]\ , 
    \end{equation}
    where $Y^g$ denotes the \emph{derived} fixed locus (\cref{genuinefixed}) of the automorphism $g$ on $Y$, and $Z(g)$ is the centralizer of $g$. 
     Note that the equivalence (\ref{inertiaglobalderived})   generalizes (\ref{inertiaglobal}) to global quotient derived DM stacks.
     See \cref{prop:InertiaGlobalQuotient}.\\
     Moreover, \eqref{inertiaglobalderived} can be further generalized to derived DM stacks of the form $[Y/G]$ for $Y$ a derived algebraic space and $G$ a linear algebraic group acting on $Y$. See \Cref{prop:QuotientByAlgebraicGroup}.
    \item We give in \cref{example:FixedLociBecomeDerived} an underived but singular DM stack $X$ such that $\inertiaDM X$ is a nontrivial derived enhancement of $\inertia X$, showing that the our definition reveals new features even in the classical setting: the deviation of the canonical map $\inertia X\to \inertiaDM X$ from being an equivalence contains information on the singularities of classical DM stacks.
\end{enumerate}

 
\medskip
 
The following is our main result:
\begin{thm}[HKR for derived DM stacks, Theorem \ref{thm:HKR_DM}]
\label{intromain2}
    For every derived \DM stack $X$ locally almost of finite presentation over a commutative $\mathbb{Q}$-algebra, there is a canonical equivalence of derived DM stacks over $X$:
    \begin{equation*}
        \xymatrix{
        \mathsf T[-1] \inertiaDM X \ar[rr]^{\hataff^\ast}_{\simeq} \ar[dr]&  &\loopstack X \ar[dl] \\
        &X&
        }
    \end{equation*}
    This equivalence is furthermore functorial in $X$.
\end{thm}

Theorem \ref{intromain2} recovers the main result of Arinkin--\caldararu--Hablicsek \cite[Theorem 1.15]{Arinkin_Caldararu_Hablicsek} when $X$ is a classical, smooth, global quotient DM stack, since in that setting, our map $\hataff^\ast$ coincides with the comparison map $\mathsf{exp}$ constructed in \emph{loc.\ cit}. In full generality, the map $\hataff^\ast$ is induced by a natural formality equivalence $\mathsf C^\ast_{\mathsf{sing}}(S^1; k)\simeq H^\ast(S^1; k)$  as cocommutative Hopf algebras.

Taking  global sections of the structure sheaves on both sides of the equivalence in \Cref{intromain2}, we obtain the following HKR theorem for Hochschild homology, which is a direct generalization of (\ref{globalhkr}) to derived DM stacks.

\begin{cor}[HKR for Hochschild homology, \Cref{thm:HKR-HH} and \Cref{corS^1=dR}]
\label{intromain3}
Let $X$ be a derived Deligne--Mumford stack locally almost of finite presentation over a commutative $\mathbb{Q}$-algebra. Then we have an equivalence of \emph{algebra objects} in the $\infty$-category $\Mod_k$:  
\begin{equation}
\label{introhhkkrr}
	\HH_{*}(X) \simeq \mathsf\Gamma(\inertiaDM X, \Sym(\bbL_{\inertiaDM X}[1]))\ .
\end{equation}
where the Hochschild homology $\HH_{*}(X)$  is equipped with its natural algebra structure, and $\mathsf\Gamma(\inertiaDM X, \Sym(\bbL_{\inertiaDM X}[1]))$ is equipped with the natural algebra structure induced from the algebra structure on the symmetric algebra. \\
Moreover, the equivalence (\ref{introhhkkrr}) intertwines the natural $S^1$-algebra structure on
$$
\cO(\loopstack X) \simeq \HH_{*}(X)
$$
and the natural mixed structure on 
$$
\cO(\mathsf T[-1]\inertiaDM X) \simeq H^*(\inertiaDM X, \Sym(\bbL_{\inertiaDM X}[1]))
$$
induced by the de Rham differential. 
\end{cor}

From \Cref{intromain2}, we also deduce an HKR theorem for Hochschild cohomology for derived DM stacks:

\begin{cor}[HKR for Hochschild cohomology, \Cref{Hochcoh}]
\label{intromain4}
Let $X$ be a derived DM stack locally almost of finite presentation over a commutative $\mathbb{Q}$-algebra.
    There is a canonical equivalence of objects in the $\infty$-category $\Mod_k$: 
    \[\mathsf{HH}^\ast(X) \simeq 
    \mathsf{\Gamma}(\mathsf T[-1] \inertiaDM X,q^!(\cO_X))\ ,\]
    where $q\colon \mathsf T[-1] \inertiaDM X\to X$ is the natural morphism.
    
If $X$ is moreover of finite presentation and lci, then  
    \begin{equation}
        \mathsf{HH}^\ast(X)\simeq \mathsf{\Gamma} (\inertiaDM X, \Sym(\mathbb{L}_{\inertiaDM X}[1])\otimes i^*\omega_X^\vee[-\dim(X)])\ .
    \end{equation}
    where $\omega_X$ is the dualizing sheaf of $X$, and $i\colon \inertiaDM X\to X$ is the canonical morphism.
\end{cor}

The compatibility of the circle action and the de Rham differential in \Cref{intromain3} yields the following HKR theorems for cyclie homology and its variants:
 
\begin{cor}[HKR for cyclic, negative cyclic and periodic cyclic homology, \Cref{cor:HCHNHP}]
\label{intro-HCHNHP}
Let $X$ be a derived DM stack  locally almost of finite presentation  over a commutative $\mathbb{Q}$-algebra.
Let $\DR$ be the derived de Rham complex of its orbifold inertia $\inertiaDM X$.
For any integer $i$, let $\DR^{\geq i}\coloneqq \Fil_{\mathsf{Hdg}}^i\DR$ and $\DR^{\leq i}\coloneqq \cofib(\Fil_{\mathsf{Hdg}}^i\DR \to \DR)$, where $\Fil^\bullet_{\mathsf{Hdg}}$ is the Hodge filtration.
There are canonical equivalences of objects in the $\infty$-category $\Mod_k$: 
\begin{align*} 
    \HC(X) & \simeq \bigoplus_{i\geq 0}  \mathsf{\Gamma} \left(\inertiaDM X, \DR^{\leq i} [2i]\right)\ ;\\
    \HN(X) &\simeq \prod_{i\in \mathbb{Z}}\mathsf{\Gamma} \left(\inertiaDM X,  \DR^{\geq i} [2i]\right)\simeq  \prod_{i\leq 0} H^*_{\dR}(\inertiaDM X)[2i] \times \prod_{i=1}^\infty \left(\inertiaDM X, \DR^{\geq i} [2i]\right)\ ;\\
    \HP(X) &\simeq \prod_{i\in \mathbb{Z}} \mathsf{\Gamma} \left(\inertiaDM X, \DR[2i]\right) \simeq \prod_{i\in \mathbb{Z}} H^*_{\dR}(\inertiaDM X)[2i]\ .
\end{align*}
\end{cor}

\begin{rem}
    In mixed and positive characteristics, the HKR isomorphism fails in general already for schemes; see Remark \ref{rempos} for a fuller discussion of these aspects. So in particular our Theorem \ref{intromain2} cannot be expected to hold in that setting. However, some of our results can be formulated in a way that also remains  valid in mixed and positive characteristics. A detailed treatment of our constructions in these more general settings  will appear in a forthcoming work, where we will extend the construction of the   \emph{HKR filtration} on Hochschild homology, which was studied in  \cite{MRT} for affine schemes, to the broader context of derived DM stacks.  
\end{rem}

\subsection{Applications}
Our results enable computations of the free loop space, hence the Hochschild (co)homology, of DM stacks in a much broader range of examples that were previously inaccessible to existing methods.
In Section \ref{sec:examples}, we work 
out explicitly some consequences in interesting geometric situations. 

First of all, for derived DM stacks which admit a presentation $X\simeq [Y/G]$ as a global quotient of a derived scheme $Y$ by a finite group $G$, we give an explicit formula in \cref{prop:InertiaGlobalQuotient} for its orbifold inertia in terms of derived fixed loci; we have already presented this as equation  (\ref{inertiaglobalderived}) above. This allows us to prove in \Cref{cor:HH-GlobalQuotientDerived} the explicit HKR decomposition of the Hochschild (co)homology of $[Y/G]$, generalizing \eqref{globalhkr3}.

In Section 
 \ref{sec:examplesbeyond}, we turn our attention to DM stacks that cannot be presented as global quotients by finite group actions. In \Cref{eg:Football_Teardrop}, we give a detailed treatment of two simple but instructive examples: Thurston's football and teardrop, two weighted projective lines which cannot be obtained as global quotients by finite groups. More generally, we compute the Hochschild homology and Hochschild cohomology of arbitrary weighted projective lines (\cref{eg:WeightedProjLine}) and root stacks $\sqrt[n]{(X, D)}$; see \Cref{cor:HHRootStack} and \Cref{prop:HHcohRootStack}. These examples are particularly relevant for us  as they fall beyond the scope of the HKR theorems which are currently available in the literature. As such, they show the reach of our new techniques and findings. 

Although most DM stacks are not global quotients by finite group actions, many of them are global quotients by linear algebraic groups (\cite[Theorem 2.18]{Edidin-Hassett-Kresch-Vistoli}), most notably, for moduli stacks constructed via Geometric Invariant Theory. In Section \ref{subsec:QuotientByAlgGroups}, we extend the formula \eqref{globalhkr3} to derived DM stacks of the form $[Y/G]$ with $G$ a linear algebraic group acting on a derived algebraic space $Y$; see \Cref{cor:HHofQuotientByAlgGp}.

Finally, Hochschild cohomology of a stack controls its (possibly non-commutative, stacky, and derived) deformations. Therefore our main result on Hochschild cohomology \Cref{intromain4} opens the way of studying the non-commutative/stacky/derived deformation theory of some interesting \DM stacks that are not global quotients. To exemplify the great potential in this direction, let us just mention one instance, namely, the moduli stacks of Deligne--Mumford stable curves. Corollary \ref{intromain4} implies in particular a formula for Hochschild cohomology conjectured by Okawa and Sano in \cite[Question 2.23 and (2.27)]{Okawa-Sano-NCRigidity}. Thanks to our work, their result  \cite[Corollary 3.21]{Okawa-Sano-NCRigidity} becomes unconditional: for any pairs of integers $g\geq 1$ and $n\geq 0$ such that $g+n\geq 5$, the moduli stack of Deligne--Mumford stable curves of genus $g$ with $n$ marked points $\overline{\mathscr{M}}_{g,n}$ satisfies
    \begin{equation*}
        \mathsf{HH}^2(\overline{\mathscr{M}}_{g,n})=0.
    \end{equation*}
    In particular, $\overline{\mathscr{M}}_{g,n}$ does not admit non-commutative deformations,  generalizing a theorem of Hacking \cite{Hacking-MgnRigid} that $\overline{\mathscr{M}}_{g,n}$ is rigid, i.e., it does not admit (commutative) deformations.

\subsection*{Acknowledgments}
We are grateful to Pieter Belmans, M\'arton Hablicsek, Adeel Khan, Elsa Maneval, Tasos Moulinos, Charanya Ravi, Marco Robalo, Tony Yue Yu, and Bertrand Toën for useful discussions on the topics of this paper.

L.F.\ is supported by the University of Strasbourg Institute for Advanced Study (USIAS), by the Agence Nationale de la Recherche (ANR) under projects FanoHK (ANR-20-CE40-0023) and DAG-Arts (ANR-24-CE40-4098), and by the International Emerging Actions (IEA) project of CNRS.
M.P.\ is supported by the grant DAG-Arts ANR-24-CE40-4098 and by Institut Universitaire de France (IUF). N.S. was partly supported by PRIN grant 2022BTA242 ``Geometry of algebraic structures: moduli, invariants, deformations''. S.S. was partly supported by the OPEN grant O23/18107005 for the project DeMoReSh.

\subsection*{Conventions} Throughout the paper, $k$ is a commutative $\mathbb{Q}$-algebra. All the derived \DM stacks considered in this paper are assumed to be 1-stacks. Even when we do not state it explicitly, all derived {schemes},  {algebraic spaces} and {\DM stacks} are assumed to be {locally almost of finite presentation}.

\section{Some results on mapping stacks}
\label{Appendix:Map}
In this section we collect some foundational results that we could not locate in the exact form that we needed in the literature.
There is an evident overlap of intention with \cite[\S3.2]{BenZvi_Nadler_Drinfeld_centers} and \cite[Appendix A]{Halpern_Leistner_Preygel_Categorical_properness}, but the results discussed below are slightly more general.
We systematically use the theory of tensor products in $\PrL_k$, which was not yet fully available at the time \cite{BenZvi_Nadler_Drinfeld_centers,Halpern_Leistner_Preygel_Categorical_properness} were firstly written.
We refer the reader to \cite[\S4.8.1]{HA} and to \cite[\S I.4]{Porta_HDR} for background on this topic.

\subsection{Categorical finiteness properties}

We introduce the following finiteness conditions on (the $\infty$-categories of quasi-coherent complexes of) derived stacks.

\begin{defin}
	Let $F \in \dSt_k$ and write $p \colon F \to \Spec(k)$ for the structural morphism.
	\begin{enumerate}\itemsep=0.2cm
		\item The derived stack $F$ is called \emph{$\otimes$-universal} if for any derived stack $X \in \dSt_k$, the canonical morphism
		\[ \boxtimes \colon \QCoh(F) \otimes_k \QCoh(X) \longrightarrow \QCoh(F \times X) \]
		is an equivalence.
		
		\item The derived stack $F$ is called \emph{categorically quasi-compact} if the functor
		\[ p_\ast \colon \QCoh(F) \longrightarrow \Mod_k \]
		commutes with filtered colimits.
		
		\item The derived stack $F$ is called \emph{categorically perfect} if the functor
		\[ p^\ast \colon \Mod_k \longrightarrow \QCoh(F) \]
		admits a left adjoint $p_+$.
	\end{enumerate}	
\end{defin}

\begin{eg}\label{eg:tensor_universal}
	Let $F \in \dSt_k$.
	\begin{enumerate}\itemsep=0.2cm
		\item If $\QCoh(F)$ is dualizable in $\PrL_k$, then $F$ is $\otimes$-universal.
		Indeed, in this case $\QCoh(F) \otimes_k (-)$ commutes with limits, and therefore both source and target $\boxtimes$ satisfy descent in $X$ and so it is enough to test that it is an equivalence when $X$ is affine.
		In this case, the claim follows from \cite[Proposition 4.13]{BenZvi_Nadler_Drinfeld_centers} (applied with $Y = \Spec(k)$).
		
		\item Let $K \in \Spc$ be an $\infty$-groupoid, seen as a constant derived stack.
		Then
		\[ \QCoh(K) \simeq \Fun(K, \Mod_k) \]
		is compactly generated, and for every $A \in \mathsf{dCAlg}_k$ we have
		\[ \QCoh(K) \otimes_k \Mod_A \simeq \Fun(K, \Mod_A) \ , \]
		as it follows from \cite[Proposition 4.8.1.17]{HA}.
		Thus, $K$ is $\otimes$-universal.
		If $K$ is in addition a compact object in $\Spc$, then the constant derived stack $K$ is categorically quasi-compact.
	\end{enumerate}
\end{eg}

\begin{lem} \label{lem:categorical_properness}
	Assume that $F$ is categorically quasi-compact, that $\QCoh(F) \simeq \Ind(\Perf(F))$ and that $p_\ast \colon \QCoh(F) \to \Mod_k$ preserves perfect complexes.
	Then $F$ is categorically perfect, and moreover the left adjoint $p_+$ is characterized by the fact that for any $M \in \Perf(F)$, we have
	\[ p_+(M) \simeq (p_\ast(M^\vee))^\vee .\]
\end{lem}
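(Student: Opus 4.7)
My plan is to construct the left adjoint $p_+$ by first defining it explicitly on the compact generators $\Perf(F)$ via the stated formula, verifying the adjunction there, and then extending along $\Perf(F) \subset \Ind(\Perf(F)) \simeq \QCoh(F)$ by filtered colimits. This is the standard strategy for building a left adjoint whose source is known to be compactly generated.

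For $M \in \Perf(F)$, the dual $M^\vee$ is again perfect, so by the third hypothesis $p_\ast(M^\vee)$ is a perfect, and in particular dualizable, object of $\Mod_k$; the formula $p_+(M) := (p_\ast(M^\vee))^\vee$ therefore makes sense. The desired adjunction $\Map_{\Mod_k}(p_+(M), N) \simeq \Map_{\QCoh(F)}(M, p^\ast N)$ is then obtained as a short chain of equivalences combining, in order, dualizability of $p_\ast(M^\vee)$ in $\Mod_k$, the projection formula
\[ p_\ast(M^\vee) \otimes_k N \simeq p_\ast\bigl(M^\vee \otimes_{\cO_F} p^\ast N\bigr), \]
the $(p^\ast, p_\ast)$-adjunction, and dualizability of $M$ in $\QCoh(F)$.

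The only step that is not entirely formal is the projection formula, which is where I expect the main subtleties. For $N \in \Perf(k)$ the identification is immediate: both sides of the displayed formula can be tested against a dualizable probe and reduce to the same tensor--hom expression using that $M^\vee$ is dualizable in $\QCoh(F)$. For arbitrary $N \in \Mod_k$, I would write $N$ as a filtered colimit of perfect $k$-modules: the left-hand side commutes with this colimit because tensoring with $p_\ast(M^\vee)$ preserves colimits, while the right-hand side commutes with it because $M^\vee \otimes p^\ast(-)$ preserves colimits and because categorical quasi-compactness of $F$ forces $p_\ast$ to preserve filtered colimits. One must keep track of naturality throughout in order to extract a genuine equivalence of functors in $N$, rather than just an abstract equivalence of underlying objects.

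Finally, using $\QCoh(F) \simeq \Ind(\Perf(F))$, I would Kan-extend $p_+$ to $\QCoh(F)$ by filtered colimits, obtaining a colimit-preserving functor $\QCoh(F) \to \Mod_k$. Since $p^\ast$ preserves all colimits, the adjunction already verified on the generating subcategory $\Perf(F)$ automatically propagates to the Ind-completion, producing the desired global left adjoint. The stated formula on perfect objects is built into the construction, and would in any case be forced by uniqueness of left adjoints.
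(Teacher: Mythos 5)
Your proposal is correct and follows essentially the same route as the paper: the paper likewise observes that the hypotheses give the projection formula, defines $p_+$ on $\Perf(F)$ by the stated dual formula, and extends by left Kan extension along $\Perf(F)\to\Ind(\Perf(F))$, citing \cite[Proposition 7.11]{Porta_Yu_Hom} for the chain of equivalences you write out explicitly. The only difference is that you supply the details (the colimit argument for the projection formula and the propagation of the adjunction to the Ind-completion) that the paper delegates to the references.
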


\begin{proof}
	Under these assumptions, projection formula holds, that is, the natural comparison morphism
	\[ p_\ast(M) \otimes N \longrightarrow p_\ast(M \otimes p^\ast(N)) \]
	is an equivalence.
	The same computation of \cite[Proposition 7.11]{Porta_Yu_Hom} shows that defining $p_+$ by the given formula on perfect complexes and taking the left Kan extension along $\Perf(F) \to \Ind(\Perf(F))$ produces indeed a left adjoint for $p^\ast$.
	See also \cite[Proposition 6.4.5.3]{SAG}.
\end{proof}

\begin{lem}\label{lem:adjointability_I}
	Let $F \in \dSt_k$ be $\otimes$-universal and categorically quasi-compact.
	Then for every morphism $f \colon X \to Y$ in $\dSt_k$, the square
	\[ \begin{tikzcd}[column sep=35pt]
		\QCoh(Y) \arrow{r}{f^\ast} \arrow{d}{p_Y^\ast} & \QCoh(X) \arrow{d}{p_X^\ast} \\
		\QCoh(F \times Y) \arrow{r}{(\id_F \times f)^\ast} & \QCoh(F \times X)
	\end{tikzcd} \]
	is vertically right adjointable, that is, the natural Beck-Chevalley transformation
	\[ f^\ast \circ p_{Y,\ast} \longrightarrow p_{X,\ast} \circ (\id_F \times f)^\ast \]
	is an equivalence.
	If in addition $F$ is categorically perfect, then the above square is also vertically left adjointable.
\end{lem}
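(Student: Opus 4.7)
The plan is to reduce the claim to the compatibility of the symmetric monoidal structure on $\PrL_k$ with the formation of internal adjoints, after transporting the square across the $\otimes$-universality equivalence. Let $\pi \colon F \to \Spec(k)$ denote the structural morphism.

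First I would invoke $\otimes$-universality of $F$ to identify $\QCoh(F \times Z) \simeq \QCoh(F) \otimes_k \QCoh(Z)$ for $Z \in \{X, Y\}$. Under these identifications, the vertical functor $p_Z^\ast$ corresponds to $\pi^\ast \otimes_k \id_{\QCoh(Z)}$, and the bottom horizontal functor corresponds to $\id_{\QCoh(F)} \otimes_k f^\ast$; both facts follow immediately from the construction of the external tensor product $\boxtimes$.

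The crucial step is to observe that $\pi_\ast$ itself lies in $\PrL_k$. As a right adjoint between stable presentable $\infty$-categories, $\pi_\ast$ preserves finite colimits; by categorical quasi-compactness it preserves filtered colimits; hence it preserves all small colimits. Invoking the general principle that if both a morphism $L$ of $\PrL_k$ and its right adjoint $R$ lie in $\PrL_k$, then $L \otimes_k \id_\cE \dashv R \otimes_k \id_\cE$ in $\PrL_k$ for every $\cE \in \PrL_k$, I conclude that $p_{Z,\ast}$ corresponds to $\pi_\ast \otimes_k \id_{\QCoh(Z)}$ for $Z \in \{X, Y\}$. With this in hand, both composites in the Beck-Chevalley transformation present the same morphism $\pi_\ast \otimes_k f^\ast$ in $\PrL_k$, and the transformation itself reduces to the coherence equivalence between these two presentations, hence is an equivalence.

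The vertically left adjointable statement follows by the same argument with $\pi_+$ in place of $\pi_\ast$: when $F$ is categorically perfect, $\pi^\ast$ admits a left adjoint $\pi_+$, which automatically lies in $\PrL_k$ since every left adjoint between presentable $\infty$-categories preserves colimits, and one obtains $p_{Z,+} \simeq \pi_+ \otimes_k \id_{\QCoh(Z)}$. The main obstacle is the cited compatibility of $\otimes_k$ with adjoints in $\PrL_k$, which I would invoke from \cite[\S4.8.1]{HA} and \cite[\S I.4]{Porta_HDR}.
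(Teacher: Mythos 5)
Your proof is correct and follows essentially the same route as the paper's: identify the vertical and horizontal functors via $\otimes$-universality, use categorical quasi-compactness to place the adjunction $p^\ast \dashv p_\ast$ inside $\PrL_k$, and then apply the (2-)functoriality of the tensor product to identify $p_{Z,\ast}$ with $p_\ast \otimes \id_{\QCoh(Z)}$, after which both composites of the Beck--Chevalley square coincide. Your explicit justification that $p_\ast$ preserves all small colimits (finite colimits by stability, filtered colimits by categorical quasi-compactness) is a detail the paper leaves implicit, but the argument is the same.
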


\begin{proof}
	Since $F$ is $\otimes$-universal, we find canonical identifications
	\[ (\id_F \times f)^\ast \simeq \id_{\QCoh(F)} \otimes f^\ast \ , \qquad p_X^\ast \simeq p^\ast \otimes \id_{\QCoh(X)} \ , \qquad q_X^\ast \simeq q^\ast \otimes \id_{\QCoh(X)} \ . \]
	Since $F$ is categorically quasi-compact, the adjunction $p^\ast \dashv p_\ast$ holds in $\PrL_k$.
	The functoriality of the tensor product in $\PrL_k$ guarantees therefore that the adjunctions
	\[ p^\ast \otimes \id_{\QCoh(X)} \dashv p_\ast \otimes \id_{\QCoh(X)} \qquad \text{and} \qquad p^\ast \otimes \id_{\QCoh(Y)} \dashv p_\ast \otimes \id_{\QCoh(Y)} \]
	hold.
	The uniqueness of the adjoint therefore provides two more identifications
	\[ p_{X,\ast} \simeq p_\ast \otimes \id_{\QCoh(X)} \ , \qquad q_{X,\ast} \simeq q_\ast \otimes \id_{\QCoh(X)} \ . \]
	At this point, the conclusion is obvious.
	The same argument shows that if there exists a left adjoint $p_+$ for $p^\ast$, then the adjunction $p_+ \otimes \id_{\QCoh(X)} \dashv p_X^\ast$ holds as well, and therefore the base change holds as well.
\end{proof}

\begin{notation}
	Let
	\[ \begin{tikzcd}[column sep=small]
		F \arrow{rr}{f} \arrow{dr}[swap]{p} & & G \arrow{dl}{q} \\
		{} & \Spec(k)
	\end{tikzcd} \]
	be a diagram in $\dSt_k$.
	The associated $\ast$-Beck-Chevalley transformation is the natural transformation
	\[ q_\ast \longrightarrow q_\ast f_\ast f^\ast \simeq p_\ast f^\ast \ . \]
	If furthermore $F$ and $G$ are categorically perfect, the associated $+$-Beck-Chevalley transformation is the natural transformation
	\[ p_+ f^\ast \longrightarrow p_+ f^\ast q^\ast q_+ \simeq p_+ p^\ast q_+ \longrightarrow q_+ \ . \]
\end{notation}

\begin{lem}\label{lem:adjointability_II}
	Let $f \colon F \to G$ be a morphism in $\dSt_k$.
	For every $X \in \dSt_k$, let
	\[ p_X \colon F \times X \longrightarrow X \ , \qquad q_X \colon G \times X \longrightarrow X \]
	be the canonical projections.
	Assume that:
	\begin{enumerate}\itemsep=0.2cm
		\item both $F$ and $G$ are $\otimes$-universal and categorically quasi-compact.
		
		\item the $\ast$-Beck-Chevalley transformation associated to $f$
		\[ q_\ast \longrightarrow p_\ast f^\ast \]
		is an equivalence.
	\end{enumerate}
	Then for every $X \in \dSt_k$, the triangle
	\[ \begin{tikzcd}[column sep=small]
		\QCoh(G \times X) \arrow{rr}{(f \times \id_X)^\ast} & & \QCoh(F \times X) \\
		& \QCoh(X) \arrow{ul}{q_X^\ast} \arrow{ur}[swap]{p_X^\ast}
	\end{tikzcd} \]
	is vertically right adjointable.
	If $F$ and $G$ are moreover categorically perfect, and $+$-Beck-Chevalley transformation associated to $f$ is an equivalence, then the above triangle is vertically left adjointable as well.
\end{lem}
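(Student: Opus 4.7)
The plan is to argue in direct parallel with \Cref{lem:adjointability_I}: reduce the statement on $F \times X \to G \times X$ to the given Beck-Chevalley equivalence on $F \to G$ by tensoring with $\id_{\QCoh(X)}$ inside $\PrL_k$.

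First, by $\otimes$-universality of $F$ and $G$ (assumption (1)), one has canonical identifications
\[ \QCoh(F \times X) \simeq \QCoh(F) \otimes_k \QCoh(X) \ , \qquad \QCoh(G \times X) \simeq \QCoh(G) \otimes_k \QCoh(X) \ , \]
under which the three functors in the triangle take the form
\[ p_X^\ast \simeq p^\ast \otimes \id_{\QCoh(X)} \ , \qquad q_X^\ast \simeq q^\ast \otimes \id_{\QCoh(X)} \ , \qquad (f \times \id_X)^\ast \simeq f^\ast \otimes \id_{\QCoh(X)} \ . \]
Categorical quasi-compactness of $F$ and $G$ guarantees that the adjunctions $p^\ast \dashv p_\ast$ and $q^\ast \dashv q_\ast$ live in $\PrL_k$. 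Since tensoring with a fixed object in $\PrL_k$ preserves adjunctions, the adjunctions $p_X^\ast \dashv (p_\ast \otimes \id_{\QCoh(X)})$ and $q_X^\ast \dashv (q_\ast \otimes \id_{\QCoh(X)})$ hold, and uniqueness of adjoints identifies these right adjoints with $p_{X,\ast}$ and $q_{X,\ast}$ respectively.

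Next, the $\ast$-Beck-Chevalley transformation $q_{X,\ast} \to p_{X,\ast} (f \times \id_X)^\ast$ attached to the triangle factors, under the identifications above, as the tensor product
\[ \bigl( q_\ast \longrightarrow p_\ast f^\ast \bigr) \otimes \id_{\QCoh(X)} \ . \]
By assumption (2) the map inside the parentheses is an equivalence in $\PrL_k$, and tensoring with the identity on $\QCoh(X)$ preserves equivalences; this proves the first claim.

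For the second claim, categorical perfectness provides left adjoints $p_+ \dashv p^\ast$ and $q_+ \dashv q^\ast$ in $\PrL_k$. The same functoriality argument yields left adjoints $p_+ \otimes \id_{\QCoh(X)} \simeq p_{X,+}$ and $q_+ \otimes \id_{\QCoh(X)} \simeq q_{X,+}$, and the $+$-Beck-Chevalley transformation $p_{X,+} (f \times \id_X)^\ast \to q_{X,+}$ is identified with $(p_+ f^\ast \to q_+) \otimes \id_{\QCoh(X)}$, an equivalence by hypothesis. The only subtle point in the whole argument is the compatibility between the formation of Beck-Chevalley transformations and the tensor product in $\PrL_k$; this is a formal consequence of the symmetric monoidal structure but should be spelled out by checking that the unit and counit of the tensored adjunctions are obtained by tensoring the original unit and counit with $\id_{\QCoh(X)}$.
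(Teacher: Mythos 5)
Your proof is correct and follows essentially the same route as the paper's: identify the three functors (and, via uniqueness of adjoints, their right/left adjoints) with tensor products against $\id_{\QCoh(X)}$, and then observe that the Beck--Chevalley transformation of the triangle is the tensor of the given one with $\id_{\QCoh(X)}$, so the hypothesis finishes the argument. Your closing remark about checking that units and counits are compatible with tensoring is exactly the content the paper compresses into ``the functoriality of the tensor product in $\PrL_k$'' (note the paper's citation of assumption ``(3)'' there is a typo for ``(2)'').
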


\begin{proof}
	Assumption (1) provides canonical identifications
	\[ (f \times \id_X)^\ast \simeq f^\ast \otimes \id_{\QCoh(X)} \ , \qquad p_X^\ast \simeq p^\ast \otimes \id_{\QCoh(X)} \ , \qquad q_X^\ast \simeq q^\ast \otimes \id_{\QCoh(X)} \ . \]
	As in the proof of \cref{lem:adjointability_I}, we furthermore obtain identifications
	\[ p_{X,\ast} \simeq p_\ast \otimes \id_{\QCoh(X)} \ , \qquad q_{X,\ast} \simeq q_\ast \otimes \id_{\QCoh(X)} \ . \]
	Thus, the conclusion follows from (3) and the functoriality of the tensor product in $\PrL_k$.
	The proof of left adjointability follows along the same lines.
\end{proof}

The following proof is completely standard, and well-documented in the literature.
See \cite[Proposition B.3.5]{Rozenblyum_Connections_infinitesimal_Hecke} or \cite[Proposition 1.4]{Naef_Safronov}.
The proof is elementary, so we include it for the convenience of the reader:

\begin{prop}\label{prop:cotangent_complex_mapping_stack}
	Let $F, X \in \dSt_k$ be derived stacks.
	Assume that $F$ is $\otimes$-universal and categorically perfect.
	Assume that $X$ admits a global cotangent complex and is infinitesimally cohesive.
	Then $\bfMap(F,X)$ admits a global cotangent complex, given by the formula
	\[ \mathbb L_{\bfMap(F,X)} \simeq \pi_+ \ev^\ast(\mathbb L_X) \ , \]
	where $\ev \colon X \times \bfMap(F,X) \to X$ is the evaluation map and $\pi \colon X \times \bfMap(F,X) \to \bfMap(F,X)$ is the canonical projection.
\end{prop}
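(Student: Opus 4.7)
The strategy is to verify, for every derived affine scheme $T$ and every point $f \colon T \to \cM := \bfMap(F,X)$, that the functor of derivations $M \mapsto \mathsf{Der}_{\cM}(f,M)$ on $\QCoh(T)^{\leq 0}$ is corepresented by an explicit formula and that this identification is functorial in $T$. The comparison map in the formula of the proposition should then arise by specialization to the universal point $\id \colon \cM \to \cM$, with base change controlled by \Cref{lem:adjointability_I}. (Note that in the statement, the evaluation map and its projection should live on $F \times \bfMap(F,X)$, not on $X \times \bfMap(F,X)$; this is evidently what is meant.)

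For the pointwise computation, let $\tilde f \colon F \times T \to X$ be the adjoint of $f$ under the defining adjunction of $\bfMap(F,-)$, and let $p_T \colon F \times T \to T$ denote the projection. That adjunction yields $\Map(T[M], \cM) \simeq \Map(F \times T[M], X)$, and since $T[M] = \Spec_T(\cO_T \oplus M)$ is relatively affine over $T$, the split square-zero extension $T \to T[M]$ with ideal $M$ pulls back along $F \times T \to T$ to the split square-zero extension $F \times T \to F \times T[M]$ with ideal $p_T^\ast M$. The infinitesimal cohesion of $X$ together with the existence of $\bbL_X$ identify the fiber of $\Map(F \times T[M], X) \to \Map(F \times T, X)$ over $\tilde f$ with $\Map_{\QCoh(F \times T)}(\tilde f^\ast \bbL_X, p_T^\ast M)$. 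Since $F$ is $\otimes$-universal and categorically perfect, the adjunction $(p_T)_+ \dashv p_T^\ast$ is provided by the left adjointability part of \Cref{lem:adjointability_I}, so this mapping space rewrites as $\Map_{\QCoh(T)}((p_T)_+\, \tilde f^\ast \bbL_X,\, M)$, establishing the pointwise formula $f^\ast \bbL_{\cM} \simeq (p_T)_+\, \tilde f^\ast \bbL_X$.

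For the global statement, let $\pi \colon F \times \cM \to \cM$ and $\ev \colon F \times \cM \to X$ be the projection and universal evaluation. For any $f \colon T \to \cM$ the factorization $\tilde f = \ev \circ (\id_F \times f)$ gives $\tilde f^\ast \bbL_X \simeq (\id_F \times f)^\ast \ev^\ast \bbL_X$, while the left adjointability clause of \Cref{lem:adjointability_I} applied to $f$ itself supplies the base-change equivalence $f^\ast \pi_+ \simeq (p_T)_+ (\id_F \times f)^\ast$. Combining these gives $f^\ast (\pi_+ \ev^\ast \bbL_X) \simeq (p_T)_+\, \tilde f^\ast \bbL_X$, which agrees with the pointwise formula from the previous paragraph. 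Since the pointwise identifications are natural in $T$, we conclude that $\pi_+ \ev^\ast \bbL_X$ is indeed a global cotangent complex for $\cM$.

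The main obstacle is the identification of $\mathsf{Der}_{\cM}(f,M)$ with derivations of $\tilde f$ into $X$ with coefficients in $p_T^\ast M$: one must argue that the non-affine square-zero extension $F \times T \to F \times T[M]$ is correctly detected by the cotangent complex of $X$, which is exactly where the infinitesimal cohesion hypothesis on $X$ enters (combined with the fact that $\bbL_X$ is a \emph{global} cotangent complex, available on arbitrary derived stacks over $X$). Once this identification of derivation spaces is in hand, the remainder of the argument is a formal manipulation using the adjunction $(p_T)_+ \dashv p_T^\ast$ and the base change supplied by \Cref{lem:adjointability_I}, both of which rest on $F$ being $\otimes$-universal and categorically perfect.
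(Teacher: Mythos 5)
Your proposal is correct and follows essentially the same route as the paper's proof: test on affine points, use the mapping-stack adjunction together with the identification $F\times T[M]\simeq (F\times T)[p_T^\ast M]$ and the infinitesimal cohesion of $X$ to compute the space of derivations as $\Map(\tilde f^\ast\bbL_X, p_T^\ast M)$, then pass through the adjunction $(p_T)_+\dashv p_T^\ast$ and the base change of \cref{lem:adjointability_I} to globalize. Your parenthetical remark that the statement should read $F\times\bfMap(F,X)$ rather than $X\times\bfMap(F,X)$ is also correct.
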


\begin{proof}
	Let $S \in \dAff_k$ be a test derived affine scheme and fix a morphism $S \to \bfMap(F,X)$ classifying a morphism $f \colon S \times F \to X$.
	Let $M \in \QCoh(S)_{\geqslant 0}$ and write $S[M]$ the split square-zero extension determined by $M$.
	By definition of mapping stack, the space of liftings 
	\[ \begin{tikzcd}
		S \arrow{d} \arrow{r} & \bfMap(F,X) \\
		S[M] \arrow[dashed]{ur}
	\end{tikzcd} \]
	is equivalent to the space of liftings 
	\[ \begin{tikzcd}
		S \times F \arrow{r}{f} \arrow{d} & X \\
		S[M] \times F \arrow[dashed]{ur} & \phantom{X} \ .
	\end{tikzcd} \]
	However, $S[M] \times F \simeq (S \times F)[p_S^\ast(M)]$, where $p_S \colon S \times F \to S$ is the canonical projection.
	Therefore, the space of such liftings is equivalent to
	\begin{equation}\label{eq:cotangent_complex_mapping_stack}
		\Map_{\QCoh(S \times F)}( f^\ast \mathbb L_X, p_S^\ast M ) \ .
	\end{equation}
	Since $p^\ast$ admits a left adjoint and $F$ is $\otimes$-universal, the same argument given in \cref{lem:adjointability_I} implies that $p_S^\ast$ also admits a left adjoint $p_{S,+}$.
	This shows that $\bfMap(F,X)$ admits a cotangent complex at the given point, which is furthermore given by the formula $p_{S,+}f^\ast(\mathbb L_X)$.
	The base change for the plus pushforward proven in \cref{lem:adjointability_I} readily implies that $\bfMap(F,X)$ has a global cotangent complex, given by the claimed formula.
\end{proof}

\begin{variant}\label{variant:cotangent_complex_mapping_stack_formally_etale_target}
	Let $X \in \dSt_k$ be a derived stack and assume that it is infinitesimally cohesive and formally étale over $k$.
	Then for any derived stack $F$, $\bfMap(F,X)$ admits a global cotangent complex, given by $0$.
	More generally, if $p \colon X \to Y$ is an infinitesimally cohesive and formally étale morphism in $\dSt_k$, then for any derived stack $F$, the induced morphism $\bfMap(F,X) \to \bfMap(F,Y)$ is again formally étale.
	The same argument given in \cref{prop:cotangent_complex_mapping_stack} supplies an identification of the space of derivations with \eqref{eq:cotangent_complex_mapping_stack}.
	The formal étaleness assumption now supplies $f^\ast \mathbb L_p \simeq 0$, so the conclusion is automatic.
\end{variant}

\begin{construction}\label{construction:comparison_cotangent_maps}
	Let $f \colon F \to G$ be a morphism in $\dSt_k$.
	Assume that $F$ is $\otimes$-universal and that both $F$ and $G$ are categorically perfect.
	Fix as well $X \in \dSt_k$ admitting a cotangent complex and consider the induced morphism
	\[ \phi_f \colon \bfMap(G,X) \longrightarrow \bfMap(F,X) \ , \]
	which is part of the following commutative diagram:
	\[ \begin{tikzcd}
		& \bfMap(G,X) \arrow{dd}[near start]{\phi_f} \arrow[equal]{rr} & & \bfMap(G,X) \\
		F \times \bfMap(G,X) \arrow[crossing over]{rr}[near end]{g} \arrow{dd}{\id_F \times \phi_f} \arrow{ur}{\pi_{F,G}} & & G \times \bfMap(G,X) \arrow{dd}{\ev_G} \arrow{ur}[swap]{\pi_G} \\
		& \bfMap(F,X) \\
		F \times \bfMap(F,X) \arrow{rr}{\ev_F} \arrow{ur}{\pi_F} & & X
	\end{tikzcd} \]
	where we set $g \coloneqq f \times \id_{\bfMap(G,X)}$.
	Since $F$ is $\otimes$-universal and categorically perfect, \cref{lem:adjointability_I} guarantees that the Beck-Chevalley transformation
	\[ \phi_f^\ast \circ \pi_{F,+} \longrightarrow \pi_{F,G,+} \circ (\id_F \times \phi_f)^\ast \]
	is an equivalence.
	Besides, the top square induces a second Beck-Chevalley transformation
	\begin{equation}\label{eq:Beck_Chevalley_maps}
		\mathsf{BC} \colon \pi_{F,G,+} \circ g^\ast \longrightarrow \pi_{G,+} \ ,
	\end{equation}
	which, paired with the above equivalence gives rise to a natural transformation
	\[ \phi_f^\ast \circ \pi_{F,+} \circ \ev_F^\ast \longrightarrow \pi_{G, +} \circ \ev_G^\ast \ . \]
	Evaluating on $\mathbb L_X$ gives rise via  to the natural comparison map
	\begin{equation}\label{eq:comparison_cotangent_maps}
		\phi_f^\ast \mathbb L_{\bfMap(F,X)} \longrightarrow \mathbb L_{\bfMap(G,X)} \ .
	\end{equation}
\end{construction}

\begin{prop}\label{prop:universal_criterion_formally_etale}
	Let $F, G \in \dSt_k$ be $\otimes$-universal and categorically perfect derived stacks.
	Write $p \colon F \to \Spec(k)$ and $q \colon G \to \Spec(k)$ for the structural morphisms.
	Let $f \colon F \to G$ be a morphism and assume that the $+$-Beck-Chevalley transformation
	\[ p_+ f^\ast \longrightarrow p_+ f^\ast q^\ast q_+ \simeq p_+ p^\ast q_+ \longrightarrow q_+ \]
	is an equivalence.
	Then for every derived stack $X \in \dSt_k$ admitting a cotangent complex, the induced transformation
	\[ \phi_f \colon \bfMap(G,X) \longrightarrow \bfMap(F,X) \]
	is formally étale, that is, it admits a cotangent complex which is moreover zero.
\end{prop}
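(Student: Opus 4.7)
The plan is to identify the relative cotangent complex $\mathbb L_{\phi_f}$ with the cofiber of the natural comparison map \eqref{eq:comparison_cotangent_maps} produced by \Cref{construction:comparison_cotangent_maps}, and to then show that this comparison map is an equivalence. It will follow that $\phi_f$ is formally étale, with vanishing cotangent complex.

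First, since $F$ and $G$ are both $\otimes$-universal and categorically perfect, \Cref{prop:cotangent_complex_mapping_stack} gives global cotangent complexes
\[ \mathbb L_{\bfMap(F,X)} \simeq \pi_{F,+} \ev_F^\ast \mathbb L_X , \qquad \mathbb L_{\bfMap(G,X)} \simeq \pi_{G,+} \ev_G^\ast \mathbb L_X , \]
and \Cref{construction:comparison_cotangent_maps} assembles from them a comparison map $\phi_f^\ast \mathbb L_{\bfMap(F,X)} \to \mathbb L_{\bfMap(G,X)}$. I then want to show this map is an equivalence; granted this, standard square-zero extension arguments (exactly analogous to the proof of \Cref{prop:cotangent_complex_mapping_stack}) will imply that $\phi_f$ admits a relative cotangent complex, which is therefore zero.

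Next, I unpack the comparison map according to \Cref{construction:comparison_cotangent_maps}. It decomposes as the composite of three arrows: (a) a base-change equivalence $\phi_f^\ast \pi_{F,+} \ev_F^\ast \mathbb L_X \simeq \pi_{F,G,+} (\id_F \times \phi_f)^\ast \ev_F^\ast \mathbb L_X$, supplied by \Cref{lem:adjointability_I} since $F$ is $\otimes$-universal and categorically perfect; (b) an obvious identification $(\id_F \times \phi_f)^\ast \ev_F^\ast \simeq g^\ast \ev_G^\ast$ coming from the commutative diagram of \Cref{construction:comparison_cotangent_maps}; and (c) the Beck–Chevalley transformation $\mathsf{BC} \colon \pi_{F,G,+} g^\ast \to \pi_{G,+}$ of \eqref{eq:Beck_Chevalley_maps}, evaluated on $\ev_G^\ast \mathbb L_X$. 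Pieces (a) and (b) are already equivalences, so the whole comparison map is an equivalence exactly when $\mathsf{BC}$ is an equivalence.

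The heart of the argument is therefore verifying that $\mathsf{BC}$ is an equivalence. This is exactly what \Cref{lem:adjointability_II} provides, applied with the test stack taken to be $\bfMap(G,X)$: the hypotheses required (that $F$ and $G$ are $\otimes$-universal and categorically perfect, and that the $+$-Beck–Chevalley transformation associated to $f \colon F \to G$ is an equivalence) are exactly our standing assumptions. Thus $\mathsf{BC}$ is an equivalence on all of $\QCoh(\bfMap(G,X))$, hence in particular on $\ev_G^\ast \mathbb L_X$. This is essentially the only substantive step; the main obstacle is purely bookkeeping, namely checking that the BC transformation produced inside \Cref{construction:comparison_cotangent_maps} matches the one governed by \Cref{lem:adjointability_II}, which follows directly by comparing the two Beck–Chevalley squares.
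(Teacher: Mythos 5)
Your proposal is correct and follows essentially the same route as the paper: identify $\mathbb L_{\phi_f}$ with the cofiber of the comparison map from \cref{construction:comparison_cotangent_maps}, reduce its invertibility to that of the Beck--Chevalley transformation \eqref{eq:Beck_Chevalley_maps}, and conclude by \cref{lem:adjointability_II}. The only difference is that you spell out the three-step decomposition of the comparison map more explicitly than the paper does.
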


\begin{proof}
	Note that $\mathbb L_{\phi_f}$ is canonically identified with the cofiber of the map \eqref{eq:comparison_cotangent_maps}.
	It is therefore enough to prove that \eqref{eq:comparison_cotangent_maps} is an equivalence.
	Tracing the definition given in \cref{construction:comparison_cotangent_maps}, and applying \cref{prop:cotangent_complex_mapping_stack}, we see that if the Beck-Chevalley transformation \eqref{eq:Beck_Chevalley_maps} is an equivalence, then the same goes for \eqref{eq:comparison_cotangent_maps}.
	Since $F$ and $G$ are $\otimes$-universal and categorically perfect, the conclusion follows directly from \cref{lem:adjointability_II}.
\end{proof}

\subsection{\'Etale codescent}

The second goal of this section is to prove the following codescent property:

\begin{thm} \label{cor:etale_codescent_B_connected}
	Let $G$ be a connected and underived algebraic group.
	Let $X$ be a derived \DM stack and let $X\et$ be the small étale site of $X$.
	Then the functor
	\[ \bfMap(\mathsf BG, -) \colon X\et \longrightarrow \dSt_k \]
	satisfies étale codescent.
\end{thm}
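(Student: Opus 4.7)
The plan is to reformulate étale codescent as a base change statement for the evaluation at the basepoint, and then to verify that base change via a rigidity-type lifting argument exploiting the connectedness of $G$.

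By a standard Čech-nerve argument, it suffices to show that for every étale morphism $f \colon U \to X$ in $X\et$, the natural comparison morphism
\[ \bfMap(\mathsf BG, U) \longrightarrow U \times_X \bfMap(\mathsf BG, X) \]
is an equivalence of derived stacks, where the map $\bfMap(\mathsf BG, X) \to X$ is evaluation at the canonical basepoint $\Spec(k) \to \mathsf BG$. Testing on $T$-points for $T \in \dAff_k$ and using the pullback of $U \to X$ along a map $\varphi \colon T \times \mathsf BG \to X$, this reduces to the following claim: any section $\sigma_0$ of an étale morphism $V \to T \times \mathsf BG$, defined over the basepoint section $\iota \colon T \to T \times \mathsf BG$, extends uniquely to a global section of $V$.

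The key step is the following rigidity argument. Identifying $T \times \mathsf BG \simeq [T/G]$ for the trivial $G$-action on $T$, the atlas $\iota$ is a $G$-torsor, so sections of $V \to T \times \mathsf BG$ correspond to $G$-equivariant sections of the pullback $\widetilde V \coloneqq V \times_{T \times \mathsf BG} T \to T$. We must verify that $\sigma_0$, viewed as a section of $\widetilde V \to T$, is automatically $G$-equivariant. The two candidate morphisms $G \times T \to \widetilde V$ sending $(g, s)$ respectively to $\sigma_0(s)$ and to $g \cdot \sigma_0(s)$ agree on $\{e\} \times T$, and the locus in $G \times T$ on which they coincide is the preimage of the diagonal of $\widetilde V \to T$. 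Since $\widetilde V \to T$ is étale, this diagonal is a clopen immersion; hence the coincidence locus is clopen in $G \times T$ and contains $\{e\} \times T$. Connectedness of $G$ then forces it to equal all of $G \times T$, yielding the desired equivariance.

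The argument above operates at the level of classical truncations. To obtain the full derived statement, one uses that the small étale site of a derived DM stack coincides with that of its classical truncation, together with the fact that étale morphisms of derived stacks satisfy the infinitesimal unique lifting property against square-zero extensions. The main obstacle is executing the rigidity argument rigorously in families over a possibly derived base $T$, and when $G$ may fail to remain connected after base change to residue fields of $T$; the first difficulty is addressed by reducing to the classical truncation via nilpotent extensions, and the second by a fibral analysis combined with faithfully flat descent to a suitable geometric cover where $G$ becomes geometrically connected.
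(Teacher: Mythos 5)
Your reduction of codescent to the base-change statement for the evaluation $\bfMap(\mathsf BG,X)\to X$, and then to unique extension of sections of an \'etale $V\to T\times\mathsf BG$ from the atlas $T$, is a legitimate and genuinely different organization from the paper's: the paper instead reduces (via formal \'etaleness of $\phi_f$, so that everything is detected on classical truncations) to showing that $\bfMap(\mathsf BG,U)\to\bfMap(\mathsf BG,V)$ is an effective epimorphism, and the whole weight is carried by the statement that $\Map(S\times\mathsf BG,X)\to\Map(S,X)$ is an equivalence for $S$ an underived affine. That statement is proved by a bar/cobar adjunction which converts the problem into showing that every homomorphism of $S$-groups $S\times G\to\Omega^1(S\times X)$ is trivial; since $X$ is Deligne--Mumford, $\Omega^1(S\times X)$ is pulled back from the inertia, its unit section is an open immersion, and a homomorphism from a connected group lands in the identity component, which is reduced to the unit. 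Your rigidity argument is a reasonable substitute for this, but it contains a concrete error.

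The gap is the assertion that, because $\widetilde V\to T$ is \'etale, its diagonal is a \emph{clopen} immersion. \'Etale (indeed unramified) only gives an \emph{open} immersion; closedness of the diagonal is separatedness of $\widetilde V\to T$, which is not a hypothesis here (the paper does not assume separated diagonals in this theorem, and pointedly adds that hypothesis elsewhere when it is needed). An open coincidence locus containing $\{e\}\times T$ is not forced to be all of $G\times T$ by connectedness of $G$ alone, so as written the rigidity step does not close. The argument can be repaired: over a field-valued point $s$ of $T$ the fibre $\widetilde V_s$ is \'etale over a field, hence a disjoint union of spectra of finite separable extensions and in particular separated and totally disconnected, so the two maps out of the integral scheme $G_{\kappa(s)}$ agreeing at $e$ must agree; the coincidence locus is then an open immersion that is surjective on points, hence an isomorphism. (This fibrewise step is where connectedness of $G$ is really used, exactly as in the paper's argument; note also that in characteristic zero a connected group with a rational point is geometrically connected, so your worry about loss of connectedness after base change is vacuous.) Two further points are asserted rather than proved: that checking agreement of the two coface maps suffices to produce a full $G$-equivariance datum (this needs the observation, made explicitly in the paper, that the relevant section spaces are discrete because the target is \'etale, so the totalization over the \v{C}ech nerve of $T\to[T/G]$ collapses to an equalizer), and the entire passage from classical truncations to the derived statement, which your last paragraph defers; the paper handles the latter cleanly by establishing that $\phi_f$ is formally \'etale before truncating.
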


\begin{rem}
    The assumption that the source of the mapping stack is of the form $\mathsf BG$ with $G$ connected cannot be easily weakened.
    For instance, $\bfMap(S^1,-)$ only satisfies codescent with respect to \emph{representable} étale covers.
\end{rem}

The proof of \Cref{cor:etale_codescent_B_connected} heavily relies on the following technical result, that is also a key ingredient in the proof of \cref{prop:unipotent_loops_DM} later in the paper: 

\begin{lem}\label{prop:mapping_from_B_connected}
	Let $G$ be a connected and underived algebraic group.
	For any underived affine scheme $S \in \Aff_k$ and any derived \DM stack $X$, the canonical morphism
	\begin{equation}\label{eq:lemma:BG_DM}
		\Map(S \times \mathsf BG, X) \longrightarrow \Map(S,X)
	\end{equation}
	induced by precomposition with $S \to S \times \mathsf BG$, is an equivalence.
\end{lem}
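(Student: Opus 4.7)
The projection $\pi \colon S \times \mathsf BG \to S$ is a retraction of the canonical section $\sigma \colon S \to S \times \mathsf BG$, so precomposition with $\pi$ already yields a section $\pi^\ast$ of \eqref{eq:lemma:BG_DM}. Hence the lemma reduces to showing $\pi^\ast \circ \sigma^\ast \simeq \id$: for each $f \colon S \times \mathsf BG \to X$, we must produce a canonical homotopy $f \simeq (\sigma^\ast f) \circ \pi$.

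The plan is to reduce to an affine target via an étale atlas. Pick an étale atlas $U \to X$ with $U$ a derived affine scheme, and form $V := (S \times \mathsf BG) \times_X U$, which is étale over $S \times \mathsf BG$. Writing $S \times \mathsf BG \simeq [S/G]$ (trivial $G$-action on $S$), the étale morphism $V \to [S/G]$ corresponds to a $G$-equivariant étale morphism $V^\sharp \to S$ with $G$ acting compatibly with the trivial action on $S$. After an étale base change on $S$, $V^\sharp$ becomes a disjoint union of copies of the base, so its $S$-automorphism scheme is discrete; as $G$ is connected, the $G$-action is forced to be trivial, and we conclude $V \simeq V^\sharp \times \mathsf BG$ with $V^\sharp \to S$ an étale surjection. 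Pulling back to $V^\sharp$ supplies a lift $\tilde f \colon V^\sharp \times \mathsf BG \to U$, and étale descent on $S$ reduces us to producing the required homotopy for $\tilde f$.

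For the affine case, write $U = \Spec(A)$ with $A$ a connective simplicial commutative ring. Flat base change along $V^\sharp \to \Spec(k)$ gives $R\Gamma(V^\sharp \times \mathsf BG, \mathcal O) \simeq \mathcal O(V^\sharp) \otimes_k R\Gamma(\mathsf BG, \mathcal O)$. Since $G$ is a connected algebraic group over a characteristic-zero field, $R\Gamma(\mathsf BG, \mathcal O)$ is cohomologically concentrated in non-negative degrees with $H^0 = k$; combined with $\mathcal O(V^\sharp)$ sitting in cohomological degree zero ($V^\sharp$ is underived), this gives $\tau_{\ge 0} R\Gamma(V^\sharp \times \mathsf BG, \mathcal O) \simeq \mathcal O(V^\sharp)$ for the connective cover. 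Maps from the connective ring $A$ factor through this connective cover, so
\[ \Map(V^\sharp \times \mathsf BG, U) \simeq \Map_{\mathrm{sCR}}(A, \mathcal O(V^\sharp)) \simeq \Map(V^\sharp, U), \]
and the equivalence is implemented by precomposition with the projection $V^\sharp \times \mathsf BG \to V^\sharp$. This supplies the sought homotopy for $\tilde f$. The one delicate point is the final descent from $V^\sharp$ back to $S$: it works because $\sigma^\ast f$ is the unique possible underlying map $S \to X$, so the canonical factorization obtained on the cover descends automatically.
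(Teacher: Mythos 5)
Your strategy (étale atlas on $X$, affinization of $\mathsf BG$ for affine targets, then descent) is genuinely different from the paper's, which instead fixes a point $x \colon S \to X$, identifies the fiber of \eqref{eq:lemma:BG_DM} with the space of \emph{group} homomorphisms $S \times G \to \Omega^1_x(S\times X)$ via the bar--cobar adjunction, and kills it using that the unit section of the inertia of a \DM stack is an open immersion together with connectedness of $G$. Your affine computation is correct and is essentially the affinization argument. However, two steps are broken as written.

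First, the claim that an étale morphism $V^\sharp \to S$ ``becomes a disjoint union of copies of the base'' after étale base change on $S$ is false: this holds for \emph{finite} étale morphisms only (take $U \to X$ a disjoint union of open subschemes of a scheme $X$; then $V^\sharp \to S$ is a union of open immersions and never splits). The conclusion you want --- triviality of the $G$-action on $V^\sharp$ --- is still true, but it requires a rigidity argument: the equalizer of the action and projection maps $G \times V^\sharp \rightrightarrows V^\sharp$ is the preimage of the relative diagonal of $V^\sharp/S$, which is an \emph{open} immersion because $V^\sharp \to S$ is unramified; each stabilizer is then an open, hence closed, subgroup of the connected group $G$ and so is everything. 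Note that this openness-of-the-diagonal mechanism is precisely the engine of the paper's proof; your stated justification bypasses it with an incorrect splitting claim.

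Second, and more seriously, the final descent is a genuine gap. Having produced, over the cover $V = V^\sharp \times \mathsf BG$ of $S \times \mathsf BG$, a homotopy $h_0 \colon f|_V \simeq ((\sigma^\ast f)\circ\pi)|_V$, you must verify the cocycle condition $d_0^\ast h_0 = d_1^\ast h_0$ on $V \times_{S\times\mathsf BG} V$ before you may conclude that $f \simeq (\sigma^\ast f)\circ\pi$ globally; since $\Map(S\times\mathsf BG, X)$ is a $1$-groupoid this is the only coherence needed, but it is a nontrivial condition. The sentence ``$\sigma^\ast f$ is the unique possible underlying map, so the factorization descends automatically'' addresses the wrong issue: uniqueness of the underlying map $S \to X$ on $\pi_0$ says nothing about whether the locally defined \emph{homotopies} glue. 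The discrepancy $(d_1^\ast h_0)^{-1}\circ d_0^\ast h_0$ is a section of $f^\ast \inertia X$ over $(V^\sharp\times_S V^\sharp)\times\mathsf BG$; one can show it is the identity by checking that it restricts to the identity along the atlas $V^\sharp\times_S V^\sharp$ and using that the inertia of a $1$-stack is relatively $0$-truncated, so that agreement of sections may be tested on an effective epimorphism --- but this argument is absent from your proposal, and without some such input the proof does not close. (A useful sanity check: your descent heuristic makes no essential use of the \DM hypothesis beyond the existence of \emph{some} atlas, yet the statement fails badly for $X = \mathsf BG$ itself, where the identity map $S\times\mathsf BG \to \mathsf BG$ does not factor through $S$.)
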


\begin{proof}
	Observe that, since $S$ and $S \times \mathsf BG$ are underived, we can replace $X$ by $\trunc(X)$, or, equivalently, assume that $X$ is underived from the very beginning.
	
	\medskip
	
	Since \eqref{eq:lemma:BG_DM} is clearly surjective on $\pi_0$, it suffices to argue that it has contractible fibers.
	Fix therefore $x \colon S \to X$ a morphism.
	The fiber of \eqref{eq:lemma:BG_DM} at $x$ is described as the fiber of 
	\[ \Map_{\St_{S/}}( S \times \mathsf BG, X ) \longrightarrow \Map_{\St_{S/}}(S,X) \ , \]
	or equivalently as fiber of 
	\begin{equation}\label{eq:prop:unipotent_loops_DM_II}
		\Map_{\St_{S/\!\!/S}}( S \times \mathsf BG, S \times X ) \longrightarrow \Map_{\St_{S /\!\!/ S}}(S, S \times X ) \ .
	\end{equation}
	Following \cite[Example 5.2.6.13]{HA}, we find an equivalence
	\[ S \times \mathsf BG \simeq \mathsf B_S(S \times G) \simeq \mathsf{Bar}^{(1)}(S \times G) \ , \]
	where the bar construction is performed in the $\infty$-topos $\St_{/S}$ of \emph{underived} stacks over $S$.
	Similarly, $S$ can be seen as the bar construction of the trivial group over $S$.
	Combining \cite[Notation 5.2.6.11 \& Remark 5.2.6.12]{HA}, we see that
	\[ \mathsf{Bar}^{(1)} \colon \mathsf{Mon}_{\mathbb E_1}(\St_{S/\!\!/S}) \longrightarrow \St_{S/\!\!/S} \]
	admits $\mathsf{Cobar}^{(1)}(-) \simeq \Omega^1(-)$ as a right adjoint.
	Here $\Omega^1(-)$ denotes the based loop space functor in $\St_{S /\!\!/ S}$.
	We can therefore rewrite the map \eqref{eq:prop:unipotent_loops_DM_II} as
	\[ \Map_{\mathsf{Mon}_{\mathbb E_1}(\St_{/S})}( S \times G, \Omega^1(S \times X) ) \longrightarrow \Map_{\mathsf{Mon}_{\mathbb E_1}(\St_{/S})}( S, \Omega^1(S \times X) ) \ , \]
	induced by the unit section $S \to S \times G$, seen as a morphism of groups.
	Notice that these mapping spaces are discrete.
	Thus, in order to conclude the argument it is therefore enough to argue that any morphism of $S$-groups
	\[ S \times G \longrightarrow \Omega^1(S \times X) \]
	factors through the unit section of $\Omega^1(S \times X)$.
	
	\medskip
	
	Observe now that
	\[ \Omega^1(S \times X) \coloneqq S \times_{S \times X} S \simeq S \times_{S \times X} ( \mathsf IX \times S ) \ , \]
	and that the unit section of this $S$-group is the pullback of the diagonal embedding
	\[ X \times S \longrightarrow \mathsf IX \times S \ . \]
	Since $X$ is a \DM stack, the diagonal $X \to X \times X$ is unramified.
	This implies that the map $X \to \mathsf IX$ is an open immersion, see \cite[\href{https://stacks.math.columbia.edu/tag/02GE}{Tag 02GE}]{stacks-project}.
	It follows that the unit section $S \to \Omega^1(S \times X)$ is an open immersion.
	In particular, a morphism $S \times G \to \Omega^1(S \times X)$ factors through the unit section if and only if it factors topologically.
	In order to check this latter statement, it is enough to assume that $S$ is the spectrum of a field.
	In this case, since $G$ is connected by assumption, and the morphism is assumed to be a morphism of groups, it must factor through the connected component of the identity in $\Omega^1(S \times X)$, which is reduced to the unit element itself.
\end{proof}

\begin{proof}[Proof of \cref{cor:etale_codescent_B_connected}]
	Since the functor $\bfMap(\mathsf BG, -)$ commutes with limits, it commutes also with the formation of \v{C}ech nerves.
	In particular, it suffices to show that if $f \colon U \to V$ is an \'etale epimorphism, the same goes for
	\[ \phi_f \colon \bfMap(\mathsf BG, U) \longrightarrow \bfMap(\mathsf BG, V) \ . \]
	First, we observe that \cref{variant:cotangent_complex_mapping_stack_formally_etale_target} implies that $\phi_f$ is formally étale.
	It immediately follows that $\phi_f$ is an effective epimorphism if and only if its truncation $\trunc(\phi_f)$ is.
	Besides, since $G$ is underived, for any derived stack $Y$ we have a canonical identification
	\[ \trunc \bfMap(\mathsf BG, Y) \simeq \trunc \bfMap(\mathsf BG,\trunc(Y)) \ . \]
	We can therefore assume from the very beginning that $X$ (and hence $U$ and $V$) is underived.
	
	\medskip
	
	Fix an affine test scheme $S \in \Aff_k$, and a morphism $S \to \bfMap(\mathsf BG, V)$, corresponding to a morphism $S \times \mathsf BG \to V$.
	We have to show that there exists an étale cover $S' \to S$ such that the restriction
	\[ S' \times \mathsf BG \longrightarrow S \times \mathsf BG \longrightarrow V \]
	factors through $f \colon U \to V$.
	It follows from \cref{prop:unipotent_loops_DM} that
	\[ \Map(S \times \mathsf BG, V) \longrightarrow \Map(S, V) \]
	is an equivalence, and similarly for $U$ in place of $V$.
	Thus, the conclusion follows from the fact that $f \colon U \to V$ was an effective epimorphism to begin with.
\end{proof}

\section{Orbifold inertia for derived Deligne-Mumford stacks}

Let $X$ be a derived \DM stack (see for example \cite{Porta_Comparison} for the basic definition). 
We emphasize that in this work we only consider $1$-stacks. Also, as we stated in the Convention section, all DM stacks are assumed to be locally almost of finite presentation. 
We typically think of $X$ as a derived stack, but we will occasionally use its representation as a structured $\infty$-topos.

\medskip

For an underived Deligne-Mumford stack $X$, it is customary to define the inertia stack of $X$ as
\[ \inertia X \coloneqq X \times_{X \times X} X \ , \]
where the fiber product is taken in the category $\St_k$ of underived stacks.
It can equivalently be realized as the (underived) mapping stack from the constant stack associated to $S^1 \in \Spc$:
\begin{equation}
    \inertia X\simeq \bfMap_{\St_k}(S^1, X) \ .
\end{equation}
The very same constructions performed in $\dSt_k$ yields the free loop space $\loopstack X$:
\begin{equation}
    \loopstack X\simeq \bfMap(S^1, X)\simeq X\times^{d}_{X\times X} X \ ,
\end{equation}
where the fiber product is taken in the category of derived stacks.
To generalize the HKR theorem, one needs to introduce a meaningful construction of the inertia stack.

\subsection{(Derived) orbifold inertia}

Our definition is motivated by the work of Abramovich--Graber--Vistoli \cite{Abramovich_Orbifold_quantum_product} and has been suggested to us by Charanya Ravi, to whom we are very grateful.
For every integer $r > 0$, let $\mathsf C_r$ be the cyclic group of order $r$, seen as a constant group scheme over $\Spec(k)$.
When $r$ divides $ r'$,  there is a canonical group homomorphism $\mathsf C_{r'} \twoheadrightarrow \mathsf C_r$, and we set
\[ \hatZ \coloneqq \flim_{r} \mathsf C_r \in \mathsf{Mon}_{\mathbb E_1}^{\mathsf{gp}}\big( \Pro(\dSt_k) \big) \ . \]
Passing to the classifying stack, we also set
\[ \hatcircle \coloneqq \flim_r \BCr \in \Pro(\dSt_k) \ . \]
Notice that the canonical homomorphisms $\Z \twoheadrightarrow\mathsf C_r$ induce at the level of classifying stacks the map
\[ \varpi_r \colon S^1 \longrightarrow \BCr \ , \]
and all these maps assemble into a morphism
\begin{equation}
\label{eqn:S1toS1hat}
    \varpi \colon S^1 \longrightarrow \hatcircle \ . 
\end{equation}

\begin{defin}[Orbifold inertia]
\label{def:IDM}
	Let $X \in \dSt_k$ be a derived stack.
	\begin{enumerate}\itemsep=0.2cm
		\item The \emph{$r$-th orbifold inertia of $X$} is the mapping stack
		\[ \inertiarth X \coloneqq \bfMap(\BCr, X) \ . \]
		
		\item The \emph{orbifold inertia of $X$} is the mapping stack
		\[ \inertiaDM X \coloneqq \bfMap( \hatcircle, X ) \ . \]
	\end{enumerate}
\end{defin}

Unraveling the definitions, we see that
\[ \inertiaDM X \simeq \colim_r \inertiarth X \ , \]
where the filtered colimit is computed in $\dSt_k$.

\begin{notation}
	The maps $\varpi$ and $\varpi_r$ induce well-defined morphisms
	\[ \iota_\varpi \colon \inertiaDM X \longrightarrow \loopstack X \qquad \text{and} \qquad \iota_r \colon \inertiarth X \longrightarrow \loopstack X \ . \]
	We also let
	\[ \pi \colon \loopstack X \longrightarrow X \ , \qquad \piDM \colon \inertiaDM X \longrightarrow X \ , \qquad \pi_r \colon \inertiarth X \longrightarrow X \]
	be the natural projections.
\end{notation}

The main result of this section is the following properties of $\inertiaDM X$:

\begin{thm}\label{thm:basics_inertia}
	Assume that $X$ is a derived $1$-Artin stack locally almost of finite presentation.
	Then the following holds.
	\begin{enumerate}\itemsep=0.2cm
		\item For every positive integer $r$, the map
		\[ \iota_r \colon \inertiarth X \longrightarrow \loopstack X \]
		is a closed immersion.
		In particular, $\inertiarth X$ is a derived Artin stack.
		
		\item If the cotangent complex of $X$ is perfect (resp.\ has tor-amplitude in $[a,b]$), the same holds for the cotangent complex of $\inertiarth X$.
		In particular, if $X$ is smooth (resp.\ lci  \footnote{In this paper, the terminology \textit{lci} stands for \textit{derived lci}, a.k.a. \textit{quasi-smooth}, which means that the tor-amplitude of the cotangent complex belongs to $[-1,1]$. Notice that an underived stack $X$ is derived lci if and only if $X$ is lci in the classical sense, see for instance \cite[Lemma 2.4]{Porta_Yu_NQK} (whose proof works verbatim in the algebraic setting).}), the same holds for $\inertiarth X$. 
		
		\item For positive integers $r \mid r'$, the transition map $\inertiarth X \to \mathsf I^{(r')} X$ is an open and closed immersion.
	\end{enumerate}
	Assume furthermore that $X$ is a derived \DM stack locally almost of finite presentation. 
	Then:
	\begin{enumerate}\setcounter{enumi}{3}\itemsep=0.2cm
		\item $\inertiaDM X$ is a derived \DM stack locally almost of finite presentation and the map
		\[ \iota_\varpi \colon \inertiaDM X \longrightarrow \loopstack X \]
		induces an isomorphism on the truncation: $\trunc\inertiaDM X\simeq \trunc\loopstack X\simeq \inertia (\trunc(X))$.
	\end{enumerate}
\end{thm}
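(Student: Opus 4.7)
The plan is to deduce parts (1)--(3) from explicit pullback descriptions of $\inertiarth X$ inside the loop stack and its iterated cousins, and to obtain (4) by taking the filtered colimit and exploiting finiteness of automorphism groups on \DM geometric points. For (1), pre-composition with the degree-$r$ self-map $S^1 \to S^1$ induces an endomorphism $[r] \colon \loopstack X \to \loopstack X$ corresponding pointwise to $(x,g) \mapsto (x,g^r)$, and a direct check on functors of points identifies $\inertiarth X$ with the pullback
\[
\begin{tikzcd}
\inertiarth X \arrow[r, "\iota_r"] \arrow[d] & \loopstack X \arrow[d, "{[r]}"] \\
X \arrow[r, "e"] & \loopstack X,
\end{tikzcd}
\]
where $e$ is the constant-loops section. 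Since $e = \Delta_{\Delta_X}$ is a closed immersion by separatedness of $\Delta_X$ (automatic for \DM stacks), stability of closed immersions under base change yields (1). For (2), I would apply \cref{prop:cotangent_complex_mapping_stack} with $F = \BCr$: in characteristic zero, $\QCoh(\BCr)$ is compactly generated by the regular representation and dualizable in $\PrL_k$, so $\BCr$ is $\otimes$-universal by \cref{eg:tensor_universal}; semisimplicity of $k[\mathsf C_r]$ together with \cref{lem:categorical_properness} makes $\BCr$ categorically quasi-compact and perfect. The resulting formula $\bbL_{\inertiarth X} \simeq \pi_{r,+}\ev^\ast \bbL_X$ transfers perfectness and the tor-amplitude $[a,b]$ bound from $\bbL_X$, because $\pi_{r,+}$ computes coinvariants under the finite group $\mathsf C_r$, which is exact in characteristic zero.

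For (3), the same pullback argument applied one level up, with $[r] \colon \mathsf I^{(r')}X \to \mathsf I^{(r')}X$ replacing $[r] \colon \loopstack X \to \loopstack X$, gives a Cartesian square
\[
\begin{tikzcd}
\inertiarth X \arrow[r] \arrow[d] & \mathsf I^{(r')}X \arrow[d, "{[r]}"] \\
X \arrow[r, "e"] & \mathsf I^{(r')}X,
\end{tikzcd}
\]
reducing the claim to $e \colon X \to \mathsf I^{(r')}X$ being open and closed. The fibers of $\mathsf I^{(r')}X \to X$ are the $r'$-torsion subgroup schemes of the automorphism groups of $X$; in characteristic zero these are finite étale with the identity as an isolated connected component, so $e$ is open, and combined with (1) at level $r'$ it is also closed. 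Base change then yields (3). For (4), (1)--(3) present $\inertiaDM X = \colim_r \inertiarth X$ as a filtered colimit along open and closed immersions. Each $\inertiarth X$ is derived \DM locally almost of finite type (being closed in $\loopstack X$, which is \DM when $X$ is), so $\inertiaDM X$ inherits these properties. For the truncation isomorphism, start from the classical identification $\trunc \loopstack X \simeq \inertia \trunc X$: since $\trunc X$ is \DM, every automorphism of a geometric point has finite order, so $\inertia \trunc X = \bigcup_r \trunc \inertiarth X$; since truncation commutes with filtered colimits along open and closed immersions, we obtain $\trunc \inertiaDM X \simeq \trunc \loopstack X \simeq \inertia \trunc X$.

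The main obstacle I anticipate is carefully verifying in (3) that $e \colon X \to \mathsf I^{(r')}X$ is open in the \emph{derived} sense, which requires controlling the derived enhancement on $\mathsf I^{(r')}X$ along the identity section. While the étale structure of torsion subgroup schemes in characteristic zero makes the statement classical on truncations, the derived version presumably needs a cotangent-complex check (possibly via \cref{prop:universal_criterion_formally_etale}) to conclude formal étaleness. A secondary, more routine point is the commutation of truncation with filtered colimits along open and closed immersions in $\dSt_k$, needed for (4).
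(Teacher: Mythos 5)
Your part (2) and the outline of (4) track the paper closely, but there is a genuine gap at the heart of (1) and (3): the square you write down is \emph{not} Cartesian in $\dSt_k$. The genuine pullback of the constant-loop section $e$ along $[r]$ is $\bfMap(\Gamma_r, X)$, where $\Gamma_r \coloneqq \cofib(\mathsf{cov}_r \colon S^1 \to S^1)$, because $\Gamma_r$ is the pushout $\ast \sqcup_{S^1} S^1$ and mapping stacks send pushouts of sources to pullbacks. Now $\Gamma_r$ is \emph{not} $\BCr$: its universal cover is a bouquet of $r-1$ two-spheres (e.g.\ $\Gamma_2 \simeq \mathbb R\mathbb P^2$ versus $\mathsf{BC}_2 \simeq \mathbb R \mathbb P^\infty$), so $\bfMap(\Gamma_r,X)$ and $\inertiarth X = \bfMap(\BCr,X)$ differ in general. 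They agree only after classical truncation, and only because $X$ is assumed to be a $1$-stack: for underived affine $S$ the space $X(S)$ is a $1$-groupoid, and $\gamma_r \colon \Gamma_r \to \BCr$ is a $1$-Postnikov truncation. This is precisely the subtlety the paper isolates (\cref{prop:homotopy_computation} and \cref{prop:inertia_key_pullback}); your ``direct check on functors of points'' does not go through. The closed-immersion conclusion of (1) survives, since being a closed immersion is detected on truncations, but you then lose the free representability of $\inertiarth X$ that base change would have given you: the paper instead establishes geometricity via Lurie's representability theorem, using the infinitesimal cohesiveness, nilcompleteness and cotangent complex you already produced in your step (2).

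For (3) the problem compounds. Your second square would again only be Cartesian on truncations (for a yet different homotopy-theoretic reason, which you have not verified), and your description of the fibers of $\mathsf I^{(r')}X \to X$ as finite \'etale torsion subgroup schemes fails for general $1$-Artin $X$ at this stage of the theorem (for $X = \mathsf B\mathrm{GL}_2$ and $r'=2$ the locus $\{A : A^2 = I\}$ is positive-dimensional). Even granting the topological openness of $e$, you correctly anticipate that openness must be checked in the derived sense, and this is exactly where the paper's argument lives: the transition map $\inertiarth X \to \mathsf I^{(r')}X$ is a closed immersion by (1) and is \emph{formally \'etale} because the $+$-Beck--Chevalley transformation for $\mathsf{BC}_{r'} \to \BCr$ is an equivalence --- a direct computation with the character decomposition $\QCoh(\BCr) \simeq \prod_{\zeta \in \mu_r}\Mod_k$, in which $q_+(k_\zeta)$ vanishes for $\zeta \neq 1$ (\cref{lem:inertiaDM_etale_transitions}, fed into \cref{prop:universal_criterion_formally_etale}). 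A formally \'etale closed immersion between geometric stacks is open and closed. You name the right tool but do not carry out the computation, and your geometric substitute for it is not correct; filling in this Beck--Chevalley calculation is what is actually required.
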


\begin{cor}
\label{cor:InertiaDM-Smooth}
	Let $X$ be a smooth \DM stack locally almost of finite presentation.  
	Then $\inertiaDM X$ is also smooth.
	In particular, it is underived and therefore it coincides with
	\[ \inertia X=X\times_{X \times X} X \ , \]
	where the fiber product is computed in the category of \emph{un}derived stacks $\St_k$.
\end{cor}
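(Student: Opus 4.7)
The plan is to deduce the corollary directly from the main \Cref{thm:basics_inertia}, in particular from parts (2), (3), and (4).

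First I would use \Cref{thm:basics_inertia}(2): since $X$ is smooth, the cotangent complex $\mathbb L_X$ is a vector bundle, and by (2) the same holds for $\mathbb L_{\inertiarth X}$. Hence each $\inertiarth X$ is a smooth derived Artin stack. Next, by \Cref{thm:basics_inertia}(3) the transition maps in the filtered diagram
\[ \inertiaDM X \simeq \colim_r \inertiarth X \]
are open and closed immersions. A filtered colimit along open and closed immersions of smooth (derived) stacks is again smooth: on cotangent complexes it reduces to the observation that open immersions are étale, so the cotangent complexes of the $\inertiarth X$ glue to a vector bundle on $\inertiaDM X$. This would show that $\inertiaDM X$ is smooth.

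Since a smooth derived stack has cotangent complex concentrated in degree $0$, it is automatically underived (this is a standard consequence of the obstruction-theoretic characterization of derived Artin stacks, or equivalently of the infinitesimal cohesiveness combined with the vanishing of $\pi_{>0}$ of its structure sheaf forced by the smoothness). Thus $\inertiaDM X$ coincides with its truncation $\trunc \inertiaDM X$.

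Finally, by \Cref{thm:basics_inertia}(4) we have
\[ \trunc \inertiaDM X \simeq \inertia(\trunc X) = \trunc X \times^{\mathrm{cl}}_{\trunc X \times \trunc X} \trunc X \ ; \]
but $X$ is already underived (smoothness of a derived stack implies underivedness, or one may observe that the hypothesis ``smooth \DM stack'' is classical), so this rewrites as $X \times^{\mathrm{cl}}_{X \times X} X$, as claimed. The only nontrivial input is the preservation of smoothness under the filtered colimit along open and closed immersions, which is the step I expect to spell out most carefully; everything else is a direct invocation of \Cref{thm:basics_inertia}.
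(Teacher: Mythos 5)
Your proposal is correct and follows exactly the route the paper takes: smoothness of $\inertiaDM X$ from items (2) and (3) of \cref{thm:basics_inertia} (smoothness of each $\inertiarth X$ plus open-and-closed transition maps), and the identification with the classical inertia from item (4). The paper states this in one line; your version just spells out the intermediate steps (in particular that smooth implies underived), all of which are accurate.
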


\begin{proof}
	The smoothness follows by combining items (2) and (3) of \cref{thm:basics_inertia}.
	The fact that it coincides with the usual inertia stack follows from item (4).
\end{proof}

\begin{rem}
	\cref{thm:basics_inertia} recovers the considerations of \cite[\S3.1]{Abramovich_GW_theory_for_DM_stacks} in the derived setting.
	The proof below might seem weirdly long compared to the arguments given in \emph{loc. cit.}
	The ultimate reason is that whereas in the classical setting a point of $\inertiarth X$ is the datum of a pair $(x,g)$ where $x$ is a point of $X$ and $g$ is an automorphism of $x$ whose order divides $r$, it is actually challenging to obtain a similar description of $\inertiarth X$ in the derived setting.
	This difficulty can be precisely quantified in terms of a basic computation in homotopy theory, see \cref{prop:homotopy_computation} below.
\end{rem}

\begin{rem}
    In general, when $X$ is underived but singular, $\inertiaDM X$ can be a non-trivial derived enhancement of $\inertia X$. See \Cref{example:FixedLociBecomeDerived}.
\end{rem}


\subsection{Deformation theory of orbifold inertia}

We start by some general considerations on the cotangent complex of mapping stacks.
The material covered here will be crucial in the proof of \cref{thm:basics_inertia} (and especially item (4) of that theorem), but it will also be used later on in the proof of the HKR theorem.

For the finiteness conditions appearing in the following lemma, see \Cref{Appendix:Map}.
\begin{lem}\label{lem:finiteness_BCr}
	For a positive integer $r$, the stack $\BCr$ is $\otimes$-universal, categorically quasi-compact and categorically perfect.
\end{lem}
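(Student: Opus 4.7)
The plan is to verify the three properties in succession. Since $\mathsf C_r$ is a constant finite group scheme it is étale over $\Spec(k)$, and étale descent along the atlas $\Spec(k) \to \BCr$ identifies
\[ \QCoh(\BCr) \simeq \mathrm{Rep}_{\mathsf C_r}(\Mod_k) \simeq \Fun(B\mathsf C_r, \Mod_k), \]
where on the right $B\mathsf C_r$ denotes the classifying space in $\Spc$; under this identification the pushforward $p_\ast$ becomes the homotopy fixed points functor $(-)^{h\mathsf C_r}$. For $\otimes$-universality I would invoke \Cref{eg:tensor_universal}(1): the category $\QCoh(\BCr)$ is compactly generated by the finite-dimensional representations of $\mathsf C_r$, each of which is dualizable in the symmetric monoidal structure, and is therefore itself dualizable as an object of $\PrL_k$; $\otimes$-universality then follows.

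For categorical quasi-compactness I would exploit the characteristic zero hypothesis. Since $r = |\mathsf C_r|$ is invertible in $k$, the group $\mathsf C_r$ is linearly reductive: the averaging projector $\tfrac{1}{r}\sum_{g \in \mathsf C_r} g$ exhibits the underived invariants $(-)^{\mathsf C_r}$ as a retract of the forgetful functor $\mathrm{Rep}_{\mathsf C_r}(\Mod_k) \to \Mod_k$, and higher group cohomology vanishes so that $p_\ast$ coincides with these underived invariants. The forgetful functor is a left adjoint (to coinduction) and hence preserves all colimits; the same consequently holds for its retract $p_\ast$, and categorical quasi-compactness follows.

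Finally, for categorical perfectness I would apply \Cref{lem:categorical_properness}: we have $\QCoh(\BCr) \simeq \Ind(\Perf(\BCr))$ as already observed, and the invariants functor $p_\ast$ plainly sends finite-dimensional representations to finite-dimensional $k$-modules, hence preserves perfect complexes. The lemma then yields the existence of a left adjoint $p_+$, characterized on perfect complexes by the formula $p_+(M) \simeq (p_\ast(M^\vee))^\vee$. None of these steps strikes me as genuinely difficult; the only real subtlety is that $B\mathsf C_r$ is not a compact object of $\Spc$ (being a $K(\mathsf C_r,1)$), so \Cref{eg:tensor_universal}(2) does not directly yield categorical quasi-compactness. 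One must truly use the characteristic zero input via the averaging projector, and that is the single step where the standing assumption $\mathrm{char}(k)=0$ is indispensable.
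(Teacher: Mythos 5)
Your proof is correct, but it takes a genuinely different route from the paper's. The paper invokes the isotypic decomposition $\QCoh(\BCr) \simeq \prod_{\zeta \in \mu_r} \QCoh_\zeta(\BCr)$ with each factor equivalent to $\Mod_k$ (citing Bergh--Schn\"urer); compact generation is then immediate, and $p_\ast$ is identified with projection onto the $\zeta = 1$ factor, which visibly preserves colimits and perfect complexes. You instead work with $\QCoh(\BCr) \simeq \Fun(B\mathsf C_r, \Mod_k)$ and prove categorical quasi-compactness by exhibiting $p_\ast \simeq (-)^{h\mathsf C_r}$ as a retract of the colimit-preserving forgetful functor via the averaging projector $\tfrac{1}{r}\sum_{g} g$; a retract of a colimit-preserving functor preserves colimits, so this is sound, and the remaining two properties follow as you say from \cref{eg:tensor_universal}(1) and \cref{lem:categorical_properness}. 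Both arguments ultimately rest on the invertibility of $r$ in $k$. Yours has the mild advantage of not needing the $r$-th roots of unity to lie in $k$, which the character decomposition implicitly requires; the paper's decomposition is the sharper structural statement and is reused verbatim later (in \cref{lem:inertiaDM_etale_transitions} and \cref{prop:HKR_DM_etale}), which is presumably why it is set up here. Your closing observation --- that $B\mathsf C_r$ is not a compact object of $\Spc$, so \cref{eg:tensor_universal}(2) yields $\otimes$-universality but not quasi-compactness, and the characteristic-zero hypothesis is therefore doing real work in exactly one place --- is accurate and worth keeping.
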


\begin{proof}
	Following \cite[Theorem 5.4]{Bergh_Schnurer} (see also \cite[Theorem 5.25]{Binda_Porta_Azumaya}), we obtain a canonical decomposition
	\begin{equation}\label{eq:character_decomposition_BCr}
		\QCoh(\BCr) \simeq \prod_{\zeta \in \mu_r} \QCoh_\zeta(\BCr) \ ,
	\end{equation}
	and moreover we have canonical identifications $\QCoh_\zeta(\BCr) \simeq \Mod_k$.
	It follows that $\QCoh(\BCr)$ is compactly generated, and therefore $\BCr$ is $\otimes$-universal.
	It is also categorically quasi-compact, as the pushforward is canonically identified with taking the component indexed by $\zeta = 1$ in the decomposition  \eqref{eq:character_decomposition_BCr}.
	Finally, this operation preserves perfect complexes, and therefore we deduce that $\BCr$ is categorically perfect as a consequence of \cref{lem:categorical_properness}.
\end{proof}

\begin{lem}\label{lem:inertiaDM_etale_transitions}
	Let $r$ and $r'$ be positive integers such that $r$ divides $r'$, and let $f \colon \mathsf{BC}_{r'} \to  \BCr$ be the natural homomorphism.
	Let $p \colon \mathsf{BC}_{r'} \to \Spec(k)$ and $q \colon \BCr \to \Spec(k)$ be the structural morphisms.
	Then the natural $+$-Beck-Chevalley transformation
	\[ p_+ \circ f^\ast \longrightarrow q_+ \]
	is an equivalence.
\end{lem}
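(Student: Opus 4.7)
My plan is to exploit the explicit character decompositions supplied by \cref{lem:finiteness_BCr} to reduce the Beck-Chevalley equivalence to a combinatorial statement about extension by zero along the inclusion of character groups $\mu_r \hookrightarrow \mu_{r'}$ induced by the quotient $\mathsf C_{r'} \twoheadrightarrow \mathsf C_r$.

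Concretely, I would start by invoking the decompositions
$$\QCoh(\BCr) \simeq \prod_{\zeta \in \mu_r} \Mod_k \qquad \text{and} \qquad \QCoh(\mathsf{BC}_{r'}) \simeq \prod_{\zeta' \in \mu_{r'}} \Mod_k .$$
Since $f^\ast$ corresponds to restriction of representations along $\mathsf C_{r'} \twoheadrightarrow \mathsf C_r$, a short computation identifies $f^\ast$ with the extension-by-zero functor $(M_\zeta)_{\zeta \in \mu_r} \mapsto (N_{\zeta'})_{\zeta' \in \mu_{r'}}$, where $N_{\zeta'} = M_{\zeta'}$ for $\zeta' \in \mu_r \subseteq \mu_{r'}$ and $N_{\zeta'} = 0$ otherwise. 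This functor is visibly colimit-preserving, and its left adjoint $f_+$ (which exists by \cref{lem:categorical_properness} applied to $f$, the required hypotheses being inherited from the decomposition) is the restriction functor $(N_{\zeta'})_{\zeta' \in \mu_{r'}} \mapsto (N_\zeta)_{\zeta \in \mu_r}$. In particular the counit $f_+ f^\ast \to \id$ is an equivalence on the nose, since restriction of an extension-by-zero recovers the original tuple.

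Under the same decompositions, $p_\ast$ and $q_\ast$ are the projection onto the trivial-character component; combining \cref{lem:categorical_properness} with the fact that dualization inverts characters, the same is true of $p_+$ and $q_+$. Since $p = q \circ f$, uniqueness of left adjoints gives $p_+ \simeq q_+ \circ f_+$, and tracing through \cref{construction:comparison_cotangent_maps} identifies the Beck-Chevalley transformation $p_+ f^\ast \to q_+$ with the result of applying $q_+$ to the counit $f_+ f^\ast \to \id$. As the latter is already an equivalence, so is the former.

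The one step that deserves extra care is the identification of the abstract Beck-Chevalley map with the one coming from the factorization $p_+ \simeq q_+ f_+$; this is formal but notationally fiddly. If desired, one can bypass this bookkeeping entirely by testing the Beck-Chevalley map on the compact generators $k_\zeta$ for $\zeta \in \mu_r$: both $p_+ f^\ast(k_\zeta)$ and $q_+(k_\zeta)$ are equal to $k$ for $\zeta = 1$ and to $0$ otherwise, and a direct unwinding shows the comparison is the identity in the nontrivial case. Since both functors preserve colimits and $\QCoh(\BCr)$ is generated under colimits by the $k_\zeta$'s, the conclusion follows.
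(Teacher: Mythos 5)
Your proposal is correct and follows essentially the same route as the paper: both rest on the character decomposition of $\QCoh(\BCr)$ and $\QCoh(\mathsf{BC}_{r'})$, the identification of $f^\ast$ with extension by zero along $\mu_r \subset \mu_{r'}$, and the computation $q_+(k_\zeta) \simeq q_\ast(k_{\zeta^{-1}})^\vee$, which vanishes unless $\zeta = 1$. Your fallback argument of testing the Beck--Chevalley map on the compact generators $k_\zeta$ is precisely the paper's proof; the preliminary packaging via $f_+$ and the factorization $p_+ \simeq q_+ f_+$ is a harmless extra layer.
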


\begin{proof}
	Under the character decomposition \eqref{eq:character_decomposition_BCr} for $\QCoh(\BCr)$, the functor $f^\ast$ correspond to the map
	\[ \prod_{\mu_r} \Mod_k \longrightarrow \prod_{\mu_{r'}} \Mod_k \]
	induced by the inclusion $\mu_r \subset \mu_{r'}$.
	Given $\zeta \in \mu_r$, write $k_\zeta$ for the element $k \in \Mod_k \simeq \QCoh_\zeta(\Mod_k)$.
	We therefore have $f^\ast(k_\zeta) \simeq k_\zeta$, and it suffices at this point to observe that
	\[ q_+(k_\zeta) \simeq q_\ast(k_{\zeta^{-1}})^\vee \simeq \begin{cases}
		k & \text{if } \zeta = 1 \\
		0 & \text{otherwise.}
	\end{cases} \]
\end{proof}

\begin{prop}\label{prop:deformation_theory_inertiarth}
	Let $X$ be a derived Artin stack.
	Then $\inertiarth X$ is infinitesimally cohesive, nilcomplete and admits a cotangent complex.
	Moreover:
	\begin{enumerate}\itemsep=0.2cm
		\item if $\mathbb L_X$ is perfect of tor-amplitude $[a,b]$, the same holds for $\mathbb L_{\inertiarth X}$;
		\item if $r \mid r'$, the induced morphism $\inertiarth X \to \mathsf I^{(r')} X$ is formally étale.
	\end{enumerate}
\end{prop}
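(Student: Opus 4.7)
My plan is to derive the three claims from the finiteness properties of $\BCr$ collected in \cref{lem:finiteness_BCr} together with the general mapping-stack machinery developed in \cref{Appendix:Map}. First, infinitesimal cohesiveness and nilcompleteness of $\inertiarth X = \bfMap(\BCr, X)$ are automatic from the corresponding properties of $X$: both conditions amount to preserving certain limits of test affine derived schemes, and the mapping stack construction preserves arbitrary limits in its target variable. For the existence of a cotangent complex I will invoke \cref{prop:cotangent_complex_mapping_stack} directly: \cref{lem:finiteness_BCr} tells us that $\BCr$ is $\otimes$-universal and categorically perfect, while $X$ being Artin supplies both a global cotangent complex and infinitesimal cohesiveness, so the hypotheses are in place to conclude
\[ \mathbb L_{\inertiarth X} \simeq \pi_+ \ev^\ast(\mathbb L_X) \ , \]
with $\ev \colon \BCr \times \inertiarth X \to X$ and $\pi \colon \BCr \times \inertiarth X \to \inertiarth X$ as in that statement.

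For item (1), I will chase the relevant cohomological bounds through this formula. Pullback along $\ev$ preserves perfectness and tor-amplitude, so it remains to control $\pi_+$. The base-change identities of \cref{lem:adjointability_I} identify $\pi_+$ with $p_+ \otimes \id_{\QCoh(\inertiarth X)}$, where $p \colon \BCr \to \Spec(k)$ is the structural morphism. Using the formula $p_+(M) \simeq (p_\ast(M^\vee))^\vee$ from \cref{lem:categorical_properness} together with the character decomposition $\QCoh(\BCr) \simeq \prod_{\zeta \in \mu_r} \Mod_k$ recalled in the proof of \cref{lem:finiteness_BCr}, the functor $p_\ast$ is identified with projection onto the trivial-character factor, hence is $t$-exact and preserves perfect complexes. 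Combining this with the two dualizations of the plus-pushforward formula returns bounds of the form $[a,b]$, yielding the stated amplitude estimate.

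For item (2), I will appeal to the universal formal-\'etaleness criterion \cref{prop:universal_criterion_formally_etale} applied to the natural quotient $f \colon \mathsf{BC}_{r'} \twoheadrightarrow \BCr$. The two classifying stacks are $\otimes$-universal and categorically perfect by \cref{lem:finiteness_BCr}, and the required $+$-Beck--Chevalley equivalence $p_+ f^\ast \simeq q_+$ is exactly the content of \cref{lem:inertiaDM_etale_transitions}; the criterion then produces the desired formal \'etaleness of the induced transition map $\inertiarth X \to \mathsf I^{(r')} X$.

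The main technical point I expect to have to handle carefully is the amplitude bookkeeping in item (1): while the character decomposition makes $p_\ast$ transparently exact in characteristic zero, one must verify that the two successive dualizations hidden inside $p_+(M) \simeq (p_\ast(M^\vee))^\vee$ genuinely return the interval $[a,b]$ rather than something shifted, and that the base-change identification with $\pi_+$ does not degrade the amplitude. The remaining steps are essentially plug-and-play applications of the preparatory material in \cref{Appendix:Map}.
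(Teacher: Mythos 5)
Your proposal is correct and follows essentially the same route as the paper: infinitesimal cohesiveness and nilcompleteness are inherited from $X$ via the limit-preservation of mapping stacks, the cotangent complex comes from \cref{lem:finiteness_BCr} plus \cref{prop:cotangent_complex_mapping_stack}, item (1) from the $t$-exactness of the (dual of) $\mathsf C_r$-invariants in characteristic zero, and item (2) from \cref{lem:inertiaDM_etale_transitions} combined with \cref{prop:universal_criterion_formally_etale}. Your extra care with the amplitude bookkeeping for the double dualization in $p_+(M) \simeq (p_\ast(M^\vee))^\vee$ is a valid elaboration of a point the paper treats tersely, and it does check out: dualizing sends $[a,b]$ to $[-b,-a]$ and back.
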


\begin{proof}
	Since $X$ is Artin, it is itself infinitesimally cohesive and nilcomplete.
	Therefore, the same holds for $\bfMap(F,X)$ for any derived stack $F$, in particular for $\inertiarth X$.
	The existence of the cotangent complex follows by combining \cref{lem:finiteness_BCr} with \cref{prop:cotangent_complex_mapping_stack}.
	Again by \cref{prop:cotangent_complex_mapping_stack}, the cotangent complex of $\inertiarth X$ is computed by the formula $$\mathbb L_{\inertiarth X}\simeq \pi_+ \ev^\ast(\mathbb L_X).$$
	Since $\pi_+$ consists of taking the dual of $\mathsf C_r$-invariants (and since in characteristic zero this operation is $t$-exact), we see that point (1) holds.
	As for statement (2), it follows from the combination of  \cref{lem:inertiaDM_etale_transitions} and \cref{prop:universal_criterion_formally_etale}.
\end{proof}

\subsection{A computation in homotopy theory}

We will reduce the proof of \cref{thm:basics_inertia} to a computation in homotopy theory.
We start by introducing some notations.

\begin{notation}
	Let $r \in \Z$ be an integer.
	We denote by
	\[ \mathsf{cov}_r \colon S^1 \to S^1 \]
	the standard degree-$r$ covering map, corresponding to the element $r \in \Z \simeq \pi_1(S^1)$.
	We write
	\[ \Gamma_r \coloneqq \cofib( \mathsf{cov}_r ) \in \Spc \ . \]
	Notice that the commutative square
	\[ \begin{tikzcd}
		\Z \arrow{r}{r} \arrow{d} & \Z \arrow{d} \\
		\ast \arrow{r} & \mathsf C_r
	\end{tikzcd} \]
	of groups induces passing to classifying stacks a commutative square
	\[ \begin{tikzcd}
		S^1 \arrow{r}{\mathsf{cov}_r} \arrow{d} & S^1 \arrow{d} \\
		\ast \arrow{r} & \BCr \ ,
	\end{tikzcd} \]
	and therefore a canonical comparison map
	\[ \gamma_r \colon \Gamma_r \longrightarrow \BCr \ . \]
\end{notation}
\begin{eg}
    For $r=2$, $\Gamma_2\simeq \mathbb{R}\mathbb{P}^2$, $\mathsf{BC}_2\simeq \mathbb{R}\mathbb{P}^{\infty}$ and the map $\gamma_2$ is the natural inclusion. 
\end{eg}

\begin{prop}\label{prop:homotopy_computation}
	\hfill
	\begin{enumerate}\itemsep=0.2cm
		\item The space $\Gamma_r$ admits the bouquet $(S^2)^{\vee r-1}$ as a universal cover.
		
		\item The map $\gamma_r$ exhibits $\BCr$ as the $1$-Postnikov truncation of $\Gamma_r$.
	\end{enumerate}
\end{prop}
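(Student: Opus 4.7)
The plan is to work entirely at the level of $\infty$-groupoids (i.e.\ topological spaces), using the CW structure on $\Gamma_r$ coming from the cofiber sequence, and to exploit the fact that the constant group scheme $\mathsf{C}_r$ realizes to $\mathsf BC_r \simeq K(\mathbb{Z}/r,1)$.

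First I would give $\Gamma_r$ its obvious CW structure: one $0$-cell, one $1$-cell (assembling into the source $S^1$ of $\mathsf{cov}_r$), and one $2$-cell attached along the target $S^1$ via the degree-$r$ map.  A van Kampen computation gives $\pi_1(\Gamma_r) \cong \langle a \mid a^r \rangle \cong \mathbb{Z}/r$, and in particular $\Gamma_r$ is connected.

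For (1), I would build the universal cover $\widetilde{\Gamma_r} \to \Gamma_r$ cell by cell using covering space theory.  Since the composite $\pi_1(S^1) = \mathbb{Z} \to \pi_1(\Gamma_r) = \mathbb{Z}/r$ is surjective, the pullback of the $1$-skeleton $S^1 \subset \Gamma_r$ is the connected $r$-fold cover, namely $S^1$ mapping to $S^1$ by $\mathsf{cov}_r$.  Hence the $1$-skeleton of $\widetilde{\Gamma_r}$ is again a copy of $S^1$.  The universal cover carries $r$ lifts of the unique $2$-cell, each attached along a lift $\widetilde f \colon S^1 \to S^1$ of the degree-$r$ attaching map along $\mathsf{cov}_r$; the degree equation $r \cdot \deg(\widetilde f) = r$ forces $\deg(\widetilde f) = 1$.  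Thus $\widetilde{\Gamma_r}$ is obtained from $S^1$ by attaching $r$ two-cells via homeomorphisms of the boundary, i.e.\ it is the pushout of $r$ copies of $D^2$ glued along their common boundary circle.  Collapsing one of these discs (which is contractible, so this does not change the homotopy type) identifies the boundary $S^1$ with a single point and turns each of the remaining $r-1$ discs into an $S^2$ meeting at that point, yielding $\widetilde{\Gamma_r} \simeq (S^2)^{\vee (r-1)}$.

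For (2), I would use that $\mathsf BC_r$ is an Eilenberg--MacLane space $K(\mathbb{Z}/r,1)$, so $\pi_0 = \ast$, $\pi_1 = \mathbb{Z}/r$, and $\pi_n = 0$ for $n \geq 2$.  It suffices therefore to check that $\gamma_r$ induces isomorphisms on $\pi_0$ and $\pi_1$.  Connectedness is clear from part (1).  For $\pi_1$, the defining commutative square furnishes a triangle
\[
    \pi_1(S^1) \longrightarrow \pi_1(\Gamma_r) \xrightarrow{\gamma_{r,\ast}} \pi_1(\mathsf BC_r)
\]
whose composite is the standard quotient $\mathbb{Z} \twoheadrightarrow \mathbb{Z}/r$; since by van Kampen the first map is already the quotient $\mathbb{Z} \twoheadrightarrow \mathbb{Z}/r$, the map $\gamma_{r,\ast}$ must be the identity on $\mathbb{Z}/r$.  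Combined with the vanishing of the higher homotopy of $\mathsf BC_r$ and Whitehead's theorem applied to the map $\gamma_r \colon \Gamma_r \to \tau_{\leq 1} \Gamma_r = \mathsf BC_r$, this concludes the proof.

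The main subtlety is the degree calculation for the lifted attaching map in part (1): once one sees that each lift is a homeomorphism of the boundary, the geometric picture of ``$r$ pages of a book glued along the spine'' immediately collapses to a wedge of spheres, and everything else is straightforward classical algebraic topology.
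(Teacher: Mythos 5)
Your proof is correct and follows essentially the same route as the paper: both arguments identify the universal cover of $\Gamma_r$ with $r$ discs glued along a common boundary circle (hence a wedge of $r-1$ two-spheres) and then deduce (2) from the resulting computation of homotopy groups. The only difference is presentational — the paper builds the cover ``from above'' as an explicit pushout $B_r$ carrying a free $\mu_r$-action with quotient $\Gamma_r$, whereas you build it ``from below'' by lifting cells via covering space theory, with the degree computation $r\cdot\deg(\widetilde f)=r$ playing the role of the paper's geometric identification of $B_r$.
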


\begin{proof}
	We represent $S^1$ as the complex circle, and we compute $\Gamma_r$ as the strict pushout of the diagram
	\[ \begin{tikzcd}
		S^1 \arrow{r}{z \mapsto z^r} \arrow{d} & S^1 \arrow{d} \\
		\mathbb D \arrow{r} & \Gamma_r \ ,
	\end{tikzcd} \]
	where $\mathbb D \coloneqq \{z \in \C \mid |z| \leqslant 1\}$.
	We can alternatively describe $\Gamma_r$ as follows.
	Let $B_r$ be the strict pushout
	\[ \begin{tikzcd}
		S^1 \times \mu_r \arrow{r} \arrow{d} & \mathbb D \times \mu_r \arrow{d} \\
		S^1 \arrow{r} & B_r
	\end{tikzcd} \]
	There is a canonical action of $\mu_r$ on $B_r$ given by
	\[ \zeta \cdot (z, \zeta') \coloneqq (\zeta \cdot z, \zeta \zeta') \ , \]
	and $\Gamma_r$ coincides with the quotient of $B_r$ by this action.
	Furthermore, this action is free and properly discontinuous and therefore the quotient map
	\[ B_r \longrightarrow \Gamma_r \]
	is a covering map.
	Finally, it is easy to see that $B_r$ is geometrically the gluing of $r$ copies of $\mathbb{D}$ along their boundaries and hence homotopically equivalent to the bouquet of $r-1$ copies of $S^2$.
	This proves statement (1).
	It also implies that $B_r$ is connected and
	\[ \pi_n(B_r) \simeq \begin{cases}
		\mathsf C_r & \text{if } n = 1 \\
		\pi_n( (S^2)^{\vee r-1} ) & \text{if } n \geqslant 2 \ .
	\end{cases} \]
	Statement (2) follows at once.
\end{proof}

Precomposition with $\mathsf{cov}_r$ induces for every $F \in \dSt_k$ a well-defined operation
\[ (-)^r \colon \loopstack F \longrightarrow \loopstack F \]
over $F$.
Write
\[ \delta_F \colon F \longrightarrow \loopstack F \]
for the relative diagonal.

\begin{cor}\label{prop:inertia_key_pullback}
	Assume that $F$ is a $1$-Artin derived stack.
	Then the square
	\[ \begin{tikzcd}
		\inertiarth F \arrow{r} \arrow{d} & \loopstack F \arrow{d}{(-)^{r}} \\
		F \arrow{r}{\delta_F} & \loopstack F
	\end{tikzcd} \]
	is a pullback square on truncations.
\end{cor}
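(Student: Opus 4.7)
The plan is to combine the cofiber description of $\Gamma_r$ with the Postnikov comparison map $\gamma_r \colon \Gamma_r \to \BCr$ from Proposition \ref{prop:homotopy_computation}. By construction, $\Gamma_r = \cofib(\mathsf{cov}_r)$ fits into the pushout square of spaces
\[ \begin{tikzcd}
    S^1 \arrow{r}{\mathsf{cov}_r} \arrow{d} & S^1 \arrow{d} \\
    \ast \arrow{r} & \Gamma_r
\end{tikzcd} \]
Applying the contravariant functor $\bfMap(-,F)$ immediately produces a derived pullback square
\[ \bfMap(\Gamma_r, F) \simeq F \times_{\loopstack F,\, (-)^r} \loopstack F \ , \]
and the map in the corollary factors as $\bfMap(\BCr,F) \xrightarrow{\gamma_r^\ast} \bfMap(\Gamma_r,F) \simeq F \times_{\loopstack F} \loopstack F$. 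It therefore suffices to show that $\gamma_r^\ast$ is an equivalence on truncations.

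For the second step, I would invoke Proposition \ref{prop:homotopy_computation}(2): $\gamma_r$ realizes $\BCr$ as the $1$-Postnikov truncation of $\Gamma_r$. In particular, $\BCr$ can be built from $\Gamma_r$ by iteratively attaching cells of dimension $\geq 3$ to kill the homotopy groups in degree $\geq 2$. Each cell attachment is a pushout along $S^n \to D^{n+1}$ for some $n \geq 2$, so on mapping stacks it yields a pullback featuring $\bfMap(S^n, F)$. The crucial observation is that on a classical affine test scheme $T$, the space $F(T)$ is $1$-truncated since $F$ is $1$-Artin; any map from the simply connected $S^n$ into a $1$-truncated space is canonically constant, so the basepoint map $F(T) \to \bfMap(S^n, F)(T) \simeq \Map_{\Spc}(S^n, F(T))$ is an equivalence for all $n \geq 2$. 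Thus each such cell attachment is invisible to the classical truncation of the mapping stack, yielding $\trunc \bfMap(\BCr, F) \simeq \trunc \bfMap(\Gamma_r, F)$.

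The main subtlety lies in the identification $\bfMap(K,F)(T) \simeq \Map_{\Spc}(K, F(T))$ for a constant stack $K$ arising from a space, which relies on writing $K \simeq \colim_K \ast$ in $\dSt_k$ so that $K \times T \simeq \colim_K T$ commutes products with colimits, and then passing this identification through to the classical truncation. Once this is in place, all four corners of the square in question are of the form $\bfMap(-, F)$ applied to constant stacks, and taking truncations identifies the whole square with $\bfMap_{\St}(-, \trunc F)$ applied to the pushout square above; this last square is automatically a pullback in classical stacks, whence the claim. The harder part is thus the bookkeeping that compares truncations of derived mapping stacks with classical mapping stacks into $\trunc F$, rather than the geometric or homotopical input, which is supplied in a compact form by Proposition \ref{prop:homotopy_computation}.
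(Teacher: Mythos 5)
Your proposal is correct and follows essentially the same route as the paper: identify $\bfMap(\Gamma_r,F)$ with the pullback via the cofiber square for $\mathsf{cov}_r$, then reduce to checking that $\gamma_r^\ast$ is an equivalence on truncations by testing on underived affine schemes $S$, where $F(S)$ is a $1$-groupoid and $\BCr$ is the $1$-Postnikov truncation of $\Gamma_r$. The only difference is presentational: the paper invokes the universal property of the Postnikov truncation directly, whereas you unpack it via cell attachments in dimensions $\geq 3$, which amounts to the same thing.
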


\begin{proof}
	Unraveling the definitions, we see that for every $F \in \dSt_k$, the square
	\[ \begin{tikzcd}
		\bfMap(\Gamma_r, F) \arrow{r} \arrow{d} & \loopstack F \arrow{d}{(-)^r} \\
		F \arrow{r}{\delta_F} & \loopstack F
	\end{tikzcd} \]
	is a pullback square.
	Since $\trunc \colon \dSt_k \to \St_k$ preserves limits, it is therefore enough to argue that the map
	\[ \inertiarth F = \bfMap(\BCr, F) \longrightarrow \bfMap(\Gamma_r, F) \]
	becomes an equivalence after applying $\trunc$.
	Equivalently, we have to check that for every underived test affine scheme $S \in \Aff_k$, the map
	\[ \bfMap(\BCr, F)(S) \longrightarrow \bfMap(\Gamma_r, F)(S) \]
	is an equivalence.
	Since both $\BCr$ and $\Gamma_r$ are the constant stacks, we can identify this map with the map
	\[ \Map_{\Spc}(\BCr, F(S)) \longrightarrow \Map_{\Spc}(\Gamma_r, F(S)) \]
	induced by $\gamma_r \colon \Gamma_r \to \BCr$.
	Since $F$ is a $1$-Artin stack and $S$ is underived, $F(S)$ is a $1$-groupoid.
	Therefore, the conclusion follows directly from the fact that $\BCr$ coincides with the $1$-Postnikov truncation of $\Gamma_r$, as proven in \cref{prop:homotopy_computation}-(2).
\end{proof}

\subsection{Proof of \cref{thm:basics_inertia}}

We are now ready for the proof of the main theorem of this section.
We start by collecting the consequences of the results obtained so far.

\begin{cor}\label{cor:inertia_Artin}
	Let $X$ be a $1$-Artin stack locally almost of finite presentation and with separated diagonal.
	Then:
	\begin{enumerate}\itemsep=0.2cm
		\item for every $r$ the map
		\[ \iota_r \colon \inertiarth X \longrightarrow \loopstack X \]
		is a closed immersion.

		\item the derived stack $\inertiarth X$ is $1$-Artin and locally almost of finite presentation.
		
		\item For any positive integers  $r \mid r'$, the induced morphism
		\[ \inertiarth X \longrightarrow \mathsf I^{(r')} X \]
		is an open and closed immersion, and therefore $\inertiaDM X$ is a derived $1$-Artin stack locally almost of finite presentation. 
	\end{enumerate}
\end{cor}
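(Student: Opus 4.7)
The plan is to chain together the previous results, with \cref{prop:inertia_key_pullback} and \cref{prop:deformation_theory_inertiarth} as the principal inputs, proving the three claims in the stated order. For claim (1), I would first apply \cref{prop:inertia_key_pullback} to identify $\trunc \iota_r$ as the base change of $\trunc \delta_X$ along the $r$-th power map; the separated-diagonal hypothesis on $X$ ensures that $\delta_X \colon X \to \loopstack X$ is a closed immersion, a property that is inherited by $\trunc \iota_r$ under base change. To upgrade to a derived closed immersion, I would form the derived pullback $Y \coloneqq X \times^{d}_{\delta_X, \loopstack X, (-)^r} \loopstack X$: the projection $Y \to \loopstack X$ is a closed immersion as the derived base change of $\delta_X$, and the universal property of pullbacks yields a canonical comparison $\inertiarth X \to Y$ that is an equivalence on truncations. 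One is then reduced to showing that this comparison is an equivalence of derived stacks, which I would attack by proving it is formally étale. Both sides are infinitesimally cohesive, nilcomplete, and admit cotangent complexes (the former by \cref{prop:deformation_theory_inertiarth}, the latter as a derived fiber product of Artin stacks), and the relative cotangent complex can be computed by combining \cref{prop:cotangent_complex_mapping_stack} on the source with the standard base-change formula for $\mathbb L_Y$ on the target.

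Claim (2) then follows with little extra work. Since $\loopstack X \simeq X \times^{d}_{X \times X} X$ is a derived fiber product of 1-Artin stacks l.a.f.p., it is itself 1-Artin l.a.f.p.; and by (1), $\iota_r \colon \inertiarth X \to \loopstack X$ is a closed immersion, so $\inertiarth X$ inherits the property of being 1-Artin l.a.f.p.

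For claim (3), the transition morphism $\inertiarth X \to \mathsf I^{(r')} X$ is formally étale by \cref{prop:deformation_theory_inertiarth}(2). Both source and target sit as closed derived substacks of $\loopstack X$ by (1), and the transition map is the factorization of $\iota_r$ through $\iota_{r'}$; by the cancellation property for closed immersions (valid in the derived setting since closed immersions correspond to surjections on $\pi_0$ of structure sheaves), the transition is itself a closed immersion. A formally étale closed immersion between derived stacks l.a.f.p.\ is étale, hence flat and closed, which forces it to be an open-and-closed immersion: a flat quotient of the structure sheaf is a direct summand. Finally, $\inertiaDM X = \colim_r \inertiarth X$ is a filtered colimit of 1-Artin l.a.f.p.\ derived stacks along open-and-closed immersions, so it is again 1-Artin l.a.f.p.

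The main obstacle is the upgrade carried out in claim (1): showing that the comparison map $\inertiarth X \to Y$ is formally étale requires a careful cotangent-complex computation and uses crucially the finiteness properties of $\BCr$ established in \cref{lem:finiteness_BCr} together with the Beck--Chevalley machinery of \cref{construction:comparison_cotangent_maps}. Once this input is in place, the rest of the proof is essentially formal bookkeeping.
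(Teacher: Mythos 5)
There is a genuine gap in your treatment of part (1). You propose to upgrade the truncation-level statement by showing that the comparison map $\inertiarth X \to Y \coloneqq \bfMap(\Gamma_r, X)$ (the derived pullback of $\delta_X$ along $(-)^r$) is formally étale, hence an equivalence. But this map is \emph{not} formally étale, and is not an equivalence of derived stacks in general: by \cref{prop:homotopy_computation}, $\gamma_r \colon \Gamma_r \to \BCr$ is only a $1$-Postnikov truncation, and the universal cover of $\Gamma_r$ is a bouquet of $(r-1)$ two-spheres. Concretely, the $+$-Beck--Chevalley transformation associated to $\gamma_r$ fails to be an equivalence: for a nontrivial character $k_\zeta$ of $\mathsf C_r$ one has $q_+(k_\zeta) \simeq 0$ on $\BCr$, whereas the homology of $\Gamma_r$ with coefficients in the corresponding local system receives a nonzero contribution in degree $2$ from $\pi_2$ (the reduced regular representation on $H_2$ of the universal cover contains every nontrivial character). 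So the relative cotangent complex of the comparison map is nonzero, and the strategy of ``formally étale $+$ iso on truncations'' cannot close. This is precisely the difficulty the paper flags in the remark after \cref{thm:basics_inertia}, and it is why \cref{prop:inertia_key_pullback} is stated as a pullback \emph{on truncations} only.

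The fix is that the upgrade is unnecessary: being a closed immersion of derived stacks is detected on classical truncations once representability is in hand. The paper's proof first shows $\trunc(\iota_r)$ is a closed immersion exactly as you do (via \cref{prop:inertia_key_pullback} and separatedness of the diagonal), deduces that $\trunc(\inertiarth X)$ is a classical $1$-Artin stack of finite presentation, and then obtains part (2) --- geometricity and local almost-finite presentation of $\inertiarth X$ --- from Lurie's representability theorem combined with \cref{prop:deformation_theory_inertiarth} (infinitesimal cohesiveness, nilcompleteness, existence and finiteness of the cotangent complex). Your alternative route to (2), ``closed substack of a $1$-Artin l.a.f.p.\ stack is $1$-Artin l.a.f.p.,'' also has a gap: a closed immersion need not be locally almost of finite presentation, so finiteness of $\inertiarth X$ cannot be inherited this way and really does require the deformation-theoretic input. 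Your argument for part (3) (cancellation for closed immersions, plus formal étaleness from \cref{prop:deformation_theory_inertiarth} forcing the transition maps to be open) matches the paper's.
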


\begin{proof}
	For (1), it is enough to prove that $\iota_r$ is a closed immersion after passing to classical truncations.
	Thanks to \cref{prop:inertia_key_pullback}, it is enough to argue that $\delta_X \colon X \to \loopstack X$ is a closed immersion, which immediately follows from the assumption that the diagonal of $X$ is separated.
	In particular, $\trunc(\inertiarth X)$ is a $1$-Artin stack of finite presentation (in the underived sense).
	Point (2) follows combining \cref{prop:deformation_theory_inertiarth} with Lurie's representability theorem.
	As for point (3), we already know as a consequence of (1) that the morphism in question is a closed immersion.
	\Cref{prop:deformation_theory_inertiarth} implies that it is formally étale, and therefore that it must be an open immersion as well.
\end{proof}

We complete the proof of \cref{thm:basics_inertia} by showing the following:

\begin{prop}
	Let $X$ be a \DM stack locally almost of finite presentation. 
	Then the map
	\[ \iota_r \colon \inertiarth X \longrightarrow \loopstack X \]
	induces an open immersion on truncations.
	Besides, the morphism
	\[ \iota_\varpi \colon \inertiaDM X \longrightarrow \loopstack X \]
	is an equivalence on truncations.
\end{prop}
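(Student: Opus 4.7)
The plan is to reduce both claims to a single geometric observation: for a \DM stack $X$, the embedding $\delta_X \colon \trunc(X) \hookrightarrow \trunc(\loopstack X)$ of constant loops is an \emph{open immersion}. This is where the \DM hypothesis truly enters. Since the diagonal $X \to X \times X$ is unramified, the natural map $X \to \mathsf I X$ is an open immersion; this is exactly the principle invoked at the end of the proof of Lemma~\ref{prop:mapping_from_B_connected}. Under the standard identification $\trunc(\loopstack X) \cong \mathsf I(\trunc X)$, this gives the desired statement about $\delta_X$.

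With this in hand, the first claim is immediate from Corollary~\ref{prop:inertia_key_pullback}: the pullback square
\[ \begin{tikzcd}
    \trunc(\inertiarth X) \arrow{r}{\trunc(\iota_r)} \arrow{d} & \trunc(\loopstack X) \arrow{d}{(-)^r} \\
    \trunc(X) \arrow{r}{\delta_X} & \trunc(\loopstack X)
\end{tikzcd} \]
exhibits $\trunc(\iota_r)$ as a base change of $\delta_X$, and open immersions are stable under pullback. Combining this with Corollary~\ref{cor:inertia_Artin}(1), one in fact obtains an open-and-closed immersion, which is consistent with the transition maps $\inertiarth X \to \mathsf I^{(r')} X$ already being known to be clopen.

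For the second claim, I would use that $\inertiaDM X = \colim_r \inertiarth X$ and that $\trunc$ commutes with filtered colimits of stacks. The induced morphism $\trunc(\iota_\varpi)$ is then a filtered union of clopen substacks of $\trunc(\loopstack X)$, and therefore itself an open immersion. To upgrade this to an equivalence it suffices to check surjectivity on geometric points. A geometric point of $\trunc(\loopstack X) \cong \mathsf I(\trunc X)$ is a pair $(x,g)$ with $g \in \mathsf{Aut}(x)$, and the finiteness of $\mathsf{Aut}(x)$---which is where the \DM assumption enters a second time---gives $g^r = 1$ for some $r \ge 1$. The pullback diagram above then shows that $(x,g)$ lifts to a geometric point of $\trunc(\inertiarth X)$, which is what we needed.

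The only delicate point in the argument is the first observation about $\delta_X$; once that is secured, both statements follow essentially formally from Corollary~\ref{prop:inertia_key_pullback} together with the finiteness of automorphism groups of \DM stacks.
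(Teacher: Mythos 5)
Your proposal is correct and follows essentially the same route as the paper: reduce to the underived case, use the pullback square of Corollary~\ref{prop:inertia_key_pullback} to exhibit $\trunc(\iota_r)$ as a base change of $\delta_X$, deduce openness from the unramifiedness of the diagonal of a \DM stack (so that $X \to \mathsf I X$ is an open immersion), and obtain surjectivity of $\iota_\varpi$ from the finiteness of stabilizers. Nothing further is needed.
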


\begin{proof}
    First observe that
    \[ \trunc(\loopstack X) \simeq \trunc( \loopstack(\trunc(X)) \simeq \mathsf I( \trunc(X)) \ . \]
    To prove the first statement, we can therefore assume from the very beginning that $X$ is underived.
    Thanks to \cref{prop:inertia_key_pullback} it is sufficient to argue that $\delta_X \colon X \to \trunc(\loopstack X)$ is an open immersion.
    As already remarked in the proof of \cref{prop:mapping_from_B_connected}, since $X$ is a \DM stack, the diagonal $X \to X \times X$ is unramified and therefore the map $X \to \mathsf IX$ is an open immersion, see \cite[\href{https://stacks.math.columbia.edu/tag/02GE}{Tag 02GE}]{stacks-project}.
    
	As for the second statement, it suffices to argue that $\iota_\varpi$ is an effective epimorphism.
	This follows directly since any element of the stabilizers of $X$ has finite order.
\end{proof}

\section{HKR isomorphism for derived \DM stacks}

In this section we prove the HKR theorem for derived \DM stacks.
The strategy consists of two steps.
First, we consider the natural map
\[ S^1 \longrightarrow \BGa \times \hatcircle , \]
and prove that when applying the mapping stacks against a derived \DM stack $X$ locally almost of finite presentation, it gives rise to an equivalence
\[ \bfMap(\BGa \times \hatcircle, X)\xrightarrow{\simeq} \bfMap(S^1, X) \simeq \loopstack X \ . \]
Then, we conclude by establishing a canonical identification
\[ \bfMap(\BGa \times \hatcircle, X) \simeq \mathsf T[-1] \inertiaDM X \ , \]
supplied by \cref{prop:unipotent_loops_DM} below.

\subsection{Unipotent loops}

Let $k$ be a commutative $\mathbb{Q}$-algebra.

\begin{prop}\label{prop:BGa_compactly_generated}
	The derived stack $\BGa$ is perfect in the sense of \cite[Definition 3.2]{BenZvi_Nadler_Drinfeld_centers}.
	In particular, the stable $\infty$-category $\QCoh(\BGa)$ satisfies
	\[ \QCoh(\BGa) \simeq \Ind(\Perf(\BGa)) \ . \]
	In other words, perfect complexes on $\BGa$ are compact, and they generate the whole $\QCoh(\BGa)$.
\end{prop}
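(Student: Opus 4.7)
The plan is to identify $\QCoh(\BGa)$ explicitly with a familiar compactly generated stable $\infty$-category, and then invoke standard facts about module categories. Concretely, in characteristic zero I expect a chain of equivalences
\[ \QCoh(\BGa) \simeq \mathrm{Rep}(\bbG_a) \simeq \Mod_{k[t]}, \]
where $k[t]$ denotes the polynomial ring in one variable placed in cohomological degree zero. The first equivalence is derived Tannakian duality, or equivalently flat descent along the atlas $p \colon \Spec(k) \to \BGa$, which identifies $\QCoh(\BGa)$ with the totalization of the cosimplicial diagram $\QCoh(\bbG_a^\bullet)$. The second equivalence expresses the fact that in characteristic zero, derived representations of the unipotent group $\bbG_a$ coincide with modules over the universal enveloping algebra $U(\mathrm{Lie}(\bbG_a)) \simeq k[t]$ of its (abelian, one-dimensional) Lie algebra. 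Under this chain, the pushforward $p_\ast \mathcal{O}_{\Spec(k)}$ of the structure sheaf along the atlas corresponds to the free rank-one $k[t]$-module.

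Once the identification is in hand, the statement reduces to a standard fact about $\Mod_{k[t]}$: it is compactly generated by the object $k[t]$ itself, and since $k[t]$ is a noetherian commutative ring, its compact objects coincide with perfect complexes. Transporting back along the above equivalences then yields $\QCoh(\BGa) \simeq \Ind(\Perf(\BGa))$, with an explicit compact perfect generator provided by the regular representation of $\bbG_a$, i.e.\ $p_\ast \mathcal{O}_{\Spec(k)}$. The compact generator can alternatively be described as a two-term resolution: over $\Spec(k)$, the trivial representation $k$ fits into a sequence $k[t] \xrightarrow{t} k[t] \to k$, showing that both $k$ and $k[t]$ are perfect in $\Mod_{k[t]}$, with $k[t]$ playing the role of generator.

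The main obstacle is making the identification $\mathrm{Rep}(\bbG_a) \simeq \Mod_{k[t]}$ precise at the derived, as opposed to the abelian, level. One route is to invoke Koszul duality for affine pro-unipotent group schemes; a more hands-on route is to compare the totalization of the flat descent cosimplicial diagram for $p \colon \Spec(k) \to \BGa$ with the cobar construction on the Hopf algebra $\mathcal{O}(\bbG_a) \simeq k[x]$ and verify the duality directly. Either way, the characteristic zero hypothesis is essential here: in positive characteristic, $\bbG_a$ has much richer representation theory (its higher Frobenius kernels are nontrivial) and $\BGa$ fails to be perfect in the Ben-Zvi--Francis--Nadler sense, which is precisely why the paper works in characteristic zero.
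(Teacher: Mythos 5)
The paper's own proof is a one-line citation of \cite[Corollary 3.22]{BenZvi_Nadler_Drinfeld_centers} (quotients by affine group schemes in characteristic zero are perfect stacks), so a self-contained argument would be welcome; unfortunately the central identification in yours is not correct. The equivalence $\QCoh(\BGa) \simeq \Mod_{k[t]}$ with $t$ in degree $0$ fails already at the level of hearts: flat descent along $\Spec(k) \to \BGa$ identifies $\QCoh(\BGa)^\heartsuit$ with \emph{comodules} over the Hopf algebra $\cO(\bbG_a) = k[x]$, i.e.\ with $k[t]$-modules on which $t$ acts \emph{locally nilpotently}, and this is a proper full subcategory of $\Mod_{k[t]}^\heartsuit$ (the character $t \mapsto 1$ of $\mathrm{Lie}(\bbG_a)$ does not integrate to an algebraic representation of $\bbG_a$, since $a \mapsto e^a$ is not polynomial). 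The discrepancy persists at the derived level: the correct affine model is the Koszul-\emph{dual} one, $\QCoh(\BGa) \simeq \Mod_{k[\eta]}$ with $\eta$ in cohomological degree $1$, coming from the fact that $\cO_{\BGa}$ is a compact generator with $\mathrm{End}(\cO_{\BGa}) \simeq \Gamma(\BGa,\cO_{\BGa}) \simeq k[\eta]$ — consistent with the paper's \cref{recollection:affinization_S1} that $\BGa$ is the affinization of $S^1$. Relatedly, $p_\ast \cO_{\Spec(k)}$ is the regular representation $k[x]$ with the translation action: it is infinite-dimensional, hence neither perfect nor compact, and as a $k[t]$-module (with $t = \partial_x$) it is a torsion, locally nilpotent module, not the free rank-one module. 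So it cannot serve as your ``explicit compact perfect generator.''

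Even granting a correct module-category model, the final step is incomplete for what is being proved: ``perfect'' in the sense of Ben-Zvi--Francis--Nadler requires that compact objects coincide with the objects that are \emph{dualizable for the symmetric monoidal structure of $\QCoh(\BGa)$}, which is $\otimes_k$ with the diagonal action — not the relative tensor product of an auxiliary module category — so identifying compacts with ``perfect $k[t]$-modules'' does not address the actual claim. A repaired version of your strategy would be: (i) $\cO_{\BGa}$ is compact because $\Gamma(\BGa,-)$ is computed by the two-term Chevalley--Eilenberg complex $V \xrightarrow{\ t\ } V$ and hence commutes with colimits; (ii) it generates, because a locally nilpotent endomorphism cannot act invertibly on a nonzero object; (iii) hence $\QCoh(\BGa) \simeq \Mod_{k[\eta]}$, whose compact objects are the finite cell modules on $\cO_{\BGa}$, which are dualizable in $\QCoh(\BGa)$, and conversely every dualizable object is compact since $\cO_{\BGa}$ is. The paper sidesteps all of this by quoting the reference directly.
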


\begin{proof}
	This is a particular case of \cite[Corollary 3.22]{BenZvi_Nadler_Drinfeld_centers}.
\end{proof}

\begin{cor}\label{cor:BGa_tensor_universal_categorically_qc}
	The derived stack $\BGa$ is $\otimes$-universal and categorically quasi-compact.
\end{cor}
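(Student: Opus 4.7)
The plan is to derive both properties as formal consequences of \cref{prop:BGa_compactly_generated}, which tells us that $\QCoh(\BGa) \simeq \Ind(\Perf(\BGa))$ is compactly generated.

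First I would handle $\otimes$-universality. A compactly generated presentable stable $\infty$-category is automatically dualizable in $\PrL_k$, so $\QCoh(\BGa)$ is dualizable. I would then invoke \cref{eg:tensor_universal}(1), which states precisely that dualizability of $\QCoh(F)$ implies $\otimes$-universality of $F$. Concretely, dualizability ensures that $\QCoh(\BGa) \otimes_k (-)$ commutes with limits, so both sides of the comparison map $\boxtimes \colon \QCoh(\BGa) \otimes_k \QCoh(X) \to \QCoh(\BGa \times X)$ satisfy descent in $X$, reducing the check to $X$ affine, where it is covered by \cite[Proposition 4.13]{BenZvi_Nadler_Drinfeld_centers} as in the proof of \cref{eg:tensor_universal}.

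Next I would address categorical quasi-compactness. Writing $p \colon \BGa \to \Spec(k)$ for the structural morphism, the task is to show that $p_\ast$ preserves filtered colimits. Since both $\QCoh(\BGa)$ and $\Mod_k$ are compactly generated and $p^\ast \dashv p_\ast$, the standard characterization of accessible right adjoints (e.g. \cite[Proposition 5.5.7.2]{HTT}) reduces this to showing that $p^\ast$ preserves compact objects. But $p^\ast$ is symmetric monoidal and colimit-preserving, $k$ is a compact generator of $\Mod_k$, and $p^\ast(k) \simeq \cO_{\BGa}$ is compact in $\QCoh(\BGa)$ precisely because $\BGa$ is perfect in the sense of \cite{BenZvi_Nadler_Drinfeld_centers}. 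Since every compact object of $\Mod_k$ is a retract of a finite colimit of shifts of $k$, $p^\ast$ sends it to such a construction applied to $\cO_{\BGa}$, which remains compact.

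There is no serious obstacle: the content of the corollary is entirely packaged into perfectness of $\BGa$, and the two bullet points decompose cleanly into (a) compact generation $\Rightarrow$ dualizability $\Rightarrow$ $\otimes$-universality, and (b) compactness of $\cO_{\BGa}$ $\Rightarrow$ $p^\ast$ preserves compact objects $\Rightarrow$ $p_\ast$ preserves filtered colimits. If anything needs care, it is only keeping track of which finiteness hypothesis feeds into which conclusion, which is dispatched by citing the relevant items in \cref{eg:tensor_universal}.
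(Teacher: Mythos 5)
Your proposal is correct and follows essentially the same route as the paper: $\otimes$-universality via compact generation of $\QCoh(\BGa)$ (hence dualizability) and \cref{eg:tensor_universal}-(1), and categorical quasi-compactness from compactness of $\cO_{\BGa}$ in $\QCoh(\BGa) \simeq \Ind(\Perf(\BGa))$. The paper phrases the second point directly as ``$\cO_{\BGa}$ is perfect, hence compact, so $p_\ast$ commutes with filtered colimits,'' whereas you pass through the equivalent criterion that $p^\ast$ preserves compact objects; this is only a cosmetic difference.
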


\begin{proof}
	The $\otimes$-universality follows from \cref{eg:tensor_universal}-(1), while categorical quasi-compactness follows from the fact that $\cO_{\BGa}$ is perfect, and therefore compact in $\QCoh(\BGa) \simeq \Ind(\Perf(\BGa))$ thanks to \cref{prop:BGa_compactly_generated}.
\end{proof}

\begin{recollection}\label{recollection:affinization_S1}
	There is a natural group morphism
	\[ \Z \longrightarrow \bbG_a \ , \]
	which induces, after applying the delooping functor $\mathsf B$, a morphism
    \begin{equation}
    \label{eqn:AffinizationS1}
        \aff \colon S^1 \longrightarrow \BGa \ .
    \end{equation}
	Write $p \colon S^1 \to \Spec(k)$ and $q \colon \BGa \to \Spec(k)$ for the structural morphisms.
	Since we are in characteristic zero, \cite[Lemma 2.2.5]{Toen_Champs_affines} implies that the natural comparison map
	\[ q_\ast(\cO_{\BGa}) \longrightarrow p_\ast( \cO_{S^1} ) \]
	is an equivalence.
	In particular, $\aff$ exhibits $\BGa$ as the affinization of $S^1$.
\end{recollection}

\begin{lem}\label{lem:BGa_categorically_perfect}
	With the notations of \cref{recollection:affinization_S1}, the canonical comparison map
	\[ q_\ast(\cF) \longrightarrow p_\ast( \aff^\ast(\cF) ) \]
	is an equivalence for every eventually coconnective $\cF \in \QCoh(\BGa)$.
	In particular, it is an equivalence on $\Perf(\BGa)$.
\end{lem}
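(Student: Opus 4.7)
The plan is to prove the lemma by d\'evissage, starting from the case $\cF = \cO_{\BGa}$ that is already established in \Cref{recollection:affinization_S1}. The comparison map in the statement is the Beck--Chevalley transformation associated to the factorization $p = q \circ \aff$, and is in particular natural and exact in $\cF$; both $q_\ast$ and $p_\ast \circ \aff^\ast$ are exact functors $\QCoh(\BGa) \to \Mod_k$. Before attempting any d\'evissage I would record that both of them commute with filtered colimits: for $q_\ast$ this is exactly the categorical quasi-compactness of $\BGa$ established in \Cref{cor:BGa_tensor_universal_categorically_qc}, while $\aff^\ast$ is a left adjoint and $p_\ast \simeq (-)^{hS^1}$ preserves filtered colimits since $S^1$ is a compact object of $\Spc$. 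Consequently the full subcategory $\cC \subset \QCoh(\BGa)$ on which the comparison map is an equivalence is a stable, localizing subcategory closed under retracts, and \Cref{recollection:affinization_S1} shows that $\cO_{\BGa} \in \cC$.

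The bulk of the argument is then to show that $\cO_{\BGa}$ generates every perfect complex on $\BGa$ as a thick subcategory, which in turn forces $\cC$ to contain $\Perf(\BGa)$ and, via \Cref{prop:BGa_compactly_generated}, all of $\QCoh(\BGa)$. For this I would invoke the unipotence of $\bbG_a$ in characteristic zero: any nonzero algebraic $\bbG_a$-representation is the union of its finite-dimensional subrepresentations, each of which admits a nonzero fixed vector by Engel's theorem. Hence the underived invariants functor is conservative on $\QCoh(\BGa)^\heartsuit$, and a standard $t$-structure argument upgrades this to the conservativity of $q_\ast$ on all of $\QCoh(\BGa)$; equivalently, $\cO_{\BGa}$ is a compact generator. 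Combining this with \Cref{prop:BGa_compactly_generated}, every perfect complex lies in the thick subcategory generated by $\cO_{\BGa}$, and every eventually coconnective sheaf is then obtained from such perfects by filtered colimits, all of which remain in $\cC$.

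The main obstacle is precisely this generation step, and it is where the characteristic zero hypothesis intervenes essentially. In positive characteristic the basic equivalence $q_\ast \cO_{\BGa} \simeq p_\ast \cO_{S^1}$ of \Cref{recollection:affinization_S1} already fails, so the d\'evissage would need a completely different starting point; the unipotence input, by contrast, is a relatively soft ingredient. The ``in particular'' clause for $\Perf(\BGa)$ is then automatic, since every perfect complex is in particular eventually coconnective.
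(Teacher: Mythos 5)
Your argument is correct in outline but follows a genuinely different route from the paper's. The paper works with the coaffine presentation $\BGa \simeq \mathsf{cSpec}(A)$, $A = \Gamma(\BGa,\cO_{\BGa})$, and Lurie's cocontinuous symmetric monoidal functor $\theta \colon \Mod_A \to \QCoh(\BGa)$: since both sides of the Beck--Chevalley map commute with all colimits and $A$ generates $\Mod_A$, the base case $\cF = \cO_{\BGa}$ propagates to the essential image of $\theta$, and the $t$-exactness of $\theta$ together with the equivalences $(\Mod_A)_{\leqslant n} \simeq \QCoh(\BGa)_{\leqslant n}$ shows that this image contains every eventually coconnective object. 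You instead prove that $\cO_{\BGa}$ is a compact generator and invoke Thomason--Neeman to get $\Perf(\BGa) = \mathrm{Thick}(\cO_{\BGa})$; this buys the stronger (and true) statement that the comparison map is an equivalence on all of $\QCoh(\BGa)$, which also follows from the paper's proof by one further colimit argument, whereas the paper's route through $\theta$ naturally stops at eventually coconnective objects.

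The one step you should not wave at is ``a standard $t$-structure argument upgrades this to the conservativity of $q_\ast$ on all of $\QCoh(\BGa)$.'' Passing from conservativity on the heart to conservativity on unbounded objects requires left-completeness of the $t$-structure on $\QCoh(\BGa)$ together with the fact that $q_\ast$ has cohomological amplitude $[0,1]$; both hold here because $\bbG_a$ has cohomological dimension $1$ in characteristic zero, but this is a genuine input rather than a formality --- it is precisely what breaks for $\mathsf B\mathbb{G}_a$ in characteristic $p$, where $\QCoh$ is not even compactly generated. You can sidestep the issue entirely within your own strategy: a perfect complex on $\BGa$ has finitely many cohomology sheaves, each a finite-dimensional representation of $\bbG_a$, hence by unipotence a finite iterated extension of trivial representations; this gives $\Perf(\BGa) \subset \mathrm{Thick}(\cO_{\BGa})$ directly, and \Cref{prop:BGa_compactly_generated} together with the closure of your subcategory $\cC$ under filtered colimits finishes the proof without ever discussing conservativity on unbounded objects. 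With that repair the argument is complete.
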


\begin{proof}
	Write
	\[ A \coloneqq \Gamma(\BGa, \cO_{\BGa}) \ . \]
	It follows from \cite[Lemme 2.2.5]{Toen_Champs_affines} that $\BGa \simeq \mathsf{cSpec}(A)$.
	As in \cite[\S4.5]{DAGVIII}, we have a natural cocontinuous symmetric monoidal functor
	\[ \theta \colon \Mod_A \longrightarrow \QCoh(\BGa) \ , \]
	Since both $\BGa$ and $S^1$ are categorically quasi-compact (in virtue of \cref{cor:BGa_tensor_universal_categorically_qc} and \cref{eg:tensor_universal}-(2), respectively), it follows that both source and target of the Beck-Chevalley transformation $q_\ast \to p_\ast \circ \aff^\ast$ commute with arbitrary colimits, since in a stable category commuting with filtered colimits implies commuting with all colimits.
	Furthermore, since $k$ has characteristic zero, we see that statement is true for $\cF \coloneqq \cO_{\BGa} \simeq \theta(A)$.
	Since $A$ is a generator for $\Mod_A$, it follows that the result holds for every $\cF$ in the essential image of $\theta$.
	Recall now from \cite[Proposition 4.5.2]{DAGVIII} that $\theta$ is $t$-exact and that it restricts to an equivalence
	\[ (\Mod_A)_{\leqslant n} \simeq \QCoh(\BGa)_{\leqslant n} \  \]
	where we are using homological indexing conventions. This immediately implies the first statement, and the second one follows from the fact that since $\BGa$ is smooth and quasi-compact, every perfect complex is eventually coconnective.
\end{proof}

\begin{prop}\label{prop:BGa_categorically_perfect}
	The derived stack $\BGa$ is categorically perfect.
\end{prop}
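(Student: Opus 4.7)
The strategy is to invoke \cref{lem:categorical_properness}, which reduces the claim to verifying three hypotheses for $F = \BGa$ with structural morphism $q \colon \BGa \to \Spec(k)$: namely, that $\BGa$ is categorically quasi-compact, that $\QCoh(\BGa) \simeq \Ind(\Perf(\BGa))$, and that $q_\ast$ preserves perfect complexes.

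Two of these three conditions are immediate consequences of results already recorded in this section. Categorical quasi-compactness is contained in \cref{cor:BGa_tensor_universal_categorically_qc}, while the compact generation statement $\QCoh(\BGa) \simeq \Ind(\Perf(\BGa))$ is precisely the content of \cref{prop:BGa_compactly_generated}.

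The only remaining task is therefore to show that $q_\ast \colon \QCoh(\BGa) \to \Mod_k$ sends perfect complexes to perfect complexes. The idea is to bring \cref{lem:BGa_categorically_perfect} into play: for any $\cF \in \Perf(\BGa)$, the affinization map $\aff \colon S^1 \to \BGa$ induces a canonical equivalence
\[ q_\ast(\cF) \xrightarrow{\ \simeq\ } p_\ast(\aff^\ast(\cF)) \ , \]
with $p \colon S^1 \to \Spec(k)$. Since pullback along $\aff$ preserves perfect complexes, we reduce to verifying that $p_\ast$ preserves perfect complexes on the constant stack $S^1$. For this, I would use that $\QCoh(S^1) \simeq \Fun(S^1, \Mod_k)$, so that $p_\ast$ computes the homotopy limit over $S^1$; as $S^1$ is a finite $\infty$-groupoid (equivalently, $\mathsf B\Z$ with $\Z$ dualizable), this homotopy limit is computed by a finite totalization, which preserves perfect complexes.

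I do not expect any serious obstacle here: all the hard work has been done in \cref{prop:BGa_compactly_generated}, \cref{cor:BGa_tensor_universal_categorically_qc}, and especially in \cref{lem:BGa_categorically_perfect} (which crucially uses the characteristic zero hypothesis via \cite{Toen_Champs_affines} to identify $q_\ast \cO_{\BGa}$ with $p_\ast \cO_{S^1}$). The proof thus reduces to assembling these ingredients and invoking \cref{lem:categorical_properness}, which additionally supplies the explicit formula $p_+(\cF) \simeq (q_\ast(\cF^\vee))^\vee$ on perfect complexes.
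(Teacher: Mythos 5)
Your proposal is correct and follows essentially the same route as the paper: reduce via \cref{lem:categorical_properness} to showing $q_\ast$ preserves perfect complexes, transfer to $S^1$ via \cref{lem:BGa_categorically_perfect}, and conclude because the limit of a $\Perf_k$-valued functor over the compact object $S^1 \in \Spc$ is a finite limit and hence perfect. The only quibble is your parenthetical ``$\mathsf B\Z$ with $\Z$ dualizable,'' which is not the right justification -- what matters is that $S^1$ is a finite space (a compact object of $\Spc$), so the limit is computed by the fiber of $1-\sigma$ and stays perfect; the paper phrases this simply as compactness of $S^1$ in $\Spc$.
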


\begin{proof}
	We keep writing $q \colon \BGa \to \Spec(k)$ for the structural morphism.
	Since $$\QCoh(\BGa) \simeq \Ind(\Perf(\BGa))$$ by \cref{prop:BGa_compactly_generated} and since $\BGa$ is categorically quasi-compact by \cref{cor:BGa_tensor_universal_categorically_qc}, \cref{lem:categorical_properness} shows that it suffices to prove that
	\[ q_\ast \colon \QCoh(\BGa) \longrightarrow \Mod_k \]
	preserves perfect complexes.
	Let therefore $\cF \in \Perf(\BGa)$.
	By \cref{lem:BGa_categorically_perfect}, we know that the natural comparison morphism
	\[ q_\ast(\cF) \longrightarrow p_\ast \mathsf{aff}^\ast(\cF) \]
	is an equivalence.
	It is then enough to observe that $\cG \coloneqq \mathsf{aff}^\ast(\cF)$ can be identified with a functor $G \colon S^1 \to \Perf_k$, and $p_\ast(\cG)$ is identified with the limit of $G$.
	Since $S^1$ is a compact object in $\Spc$, the limit remains  perfect, whence the conclusion.
\end{proof}

\begin{rem}
	Since $\BGa$ is $\otimes$-universal, the functoriality of the tensor product of $\infty$-categories shows that \cref{prop:BGa_categorically_perfect} recovers the integration map for $\BGa$ of \cite[\S5.3]{Naef_Safronov}.
\end{rem}

\subsection{Shifted tangents}

Let $X$ be a derived Artin stack.
For every integer $n \in \Z$ we set
\[ \mathsf T[n]X \coloneqq \Spec_X(\Sym_{\cO_X}(\bbL_X[-n])) \ . \]
Since
\[ \begin{tikzcd}
	\bbL_X[-n] \arrow{r} \arrow{d} & 0 \arrow{d} \\
	0 \arrow{r} & \bbL_X[-n+1]
\end{tikzcd} \]
is a pushout in $\QCoh(X)$, we see that
\[ \begin{tikzcd}
	\mathsf T[n-1]X \arrow{r} \arrow{d} & X \arrow{d} \\
	X \arrow{r} & \mathsf T[n]X
\end{tikzcd} \]
is a pullback square in $\dSt_k$, where both maps $X\to \mathsf T[n]X$ are the inclusion of the zero section.

\medskip

Consider the ordinary split square-zero extension
\[ k[\varepsilon] \coloneqq k \oplus k \varepsilon \ , \]
with $\varepsilon^2=0$.
We set
\[ \bbD_0 \coloneqq \Spec(k[\varepsilon]) \ . \]
For every derived Artin stack $X$, we have a canonical equivalence
\[ \bfMap(\bbD_0, X) \simeq \mathsf TX \ , \]
see for instance \cite[Proposition 1.4.1.6]{HAG-II}.
Define $\bbD_{-1}$ as the following pushout in $\dSt_k$:
\[ \begin{tikzcd}
	\bbD_0 \arrow{r} \arrow{d} & \Spec(k) \arrow{d} \\
	\Spec(k) \arrow{r} & \bbD_{-1} \ .
\end{tikzcd} \]

\begin{rem}
	In fact, it follows from \cite[Proposition 5.11]{Naef_Safronov} that there is a canonical identification $\bbD_{-1} \simeq \hatBGa$.
\end{rem}

It immediately follows from this discussion that
\[ \bfMap(\bbD_{-1}, X) \simeq X\times_{\mathsf TX} X\simeq \mathsf T[-1]X \ . \]
The natural inclusion $\bbD_0 \to \bbG_a$ induces a commutative diagram
\[ \begin{tikzcd}
	\Spec(k) \arrow[equal]{d} & \bbD_0 \arrow{r} \arrow{d} \arrow{l} & \Spec(k) \arrow[equal]{d} \\
	\Spec(k) & \bbG_a \arrow{l} \arrow{r} & \Spec(k) \ ,
\end{tikzcd} \]
and therefore a canonical comparison map
\begin{equation}\label{eq:D_-1_to_BGa}
    u \colon \bbD_{-1} \longrightarrow \BGa \ .
\end{equation}
In particular, for every derived stack $X$, we obtain a natural comparison morphism
\[ u^\ast \colon \bfMap(\BGa, X) \longrightarrow \bfMap(\bbD_{-1}, X) \simeq \mathsf T[-1] X \ . \]

\begin{prop}\label{prop:unipotent_loops_DM}
	For $X$ a derived \DM stack locally almost of finite presentation, the morphism $u^\ast$ is an equivalence.
\end{prop}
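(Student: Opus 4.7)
The plan is to show that $u^\ast$ is an equivalence by combining a classical-truncation check with a formal étaleness argument. Since $X$ is a derived Artin (indeed \DM) stack, both $\bfMap(\BGa, X)$ and $\bfMap(\bbD_{-1}, X) \simeq \mathsf T[-1]X$ inherit nilcompleteness and infinitesimal cohesiveness from $X$, so a morphism between them is an equivalence as soon as it induces an equivalence on classical truncations and has vanishing relative cotangent complex. I would therefore break the proof into these two checks.

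For the truncation statement, I would use that $\BGa$ is underived as a stack, so $\trunc(\bfMap(\BGa, X))$ is represented on every underived affine test $S$ by $\Map(S \times \BGa, \trunc(X))$. Applying \Cref{prop:mapping_from_B_connected} to the connected algebraic group $G = \bbG_a$, restriction along the unit $S \to S \times \BGa$ yields $\Map(S \times \BGa, \trunc(X)) \simeq \Map(S, \trunc(X))$, hence $\trunc(\bfMap(\BGa, X)) \simeq \trunc(X)$. On the other hand, the underived truncation of the pushout $\bbD_{-1} = \Spec(k) \sqcup_{\bbD_0} \Spec(k)$ collapses to $\Spec(k)$, because the two legs out of $\bbD_0$ are the augmentation $k[\varepsilon] \twoheadrightarrow k$, so $\trunc(\bfMap(\bbD_{-1}, X)) \simeq \trunc(X)$ as well, and $\trunc(u^\ast)$ is identified with the identity of $\trunc(X)$ through these descriptions.

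For formal étaleness I would apply \Cref{prop:universal_criterion_formally_etale} to $u\colon \bbD_{-1} \to \BGa$. Half of the finiteness hypotheses are already in hand: \Cref{cor:BGa_tensor_universal_categorically_qc} and \Cref{prop:BGa_categorically_perfect} provide $\otimes$-universality and categorical perfectness of $\BGa$. The parallel assertions for $\bbD_{-1}$ can be established from the pushout description $\QCoh(\bbD_{-1}) \simeq \Mod_k \times_{\Mod_{k[\varepsilon]}} \Mod_k$, which is dualizable in $\PrL_k$ (giving $\otimes$-universality via \Cref{eg:tensor_universal}), with a left adjoint $p_+$ coming from the cocartesian side of the pushout. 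The substantive remaining input is the $+$-Beck–Chevalley equivalence $p_+ \circ u^\ast \xrightarrow{\simeq} q_+$, with $p\colon \bbD_{-1} \to \Spec(k)$ and $q\colon \BGa \to \Spec(k)$. Using the affinization $\BGa \simeq \mathsf{cSpec}(k[\beta])$ of \Cref{recollection:affinization_S1} together with the analogous affine description of $\bbD_{-1}$, this should follow from a direct comparison of integration maps on $\BGa$ and on $\bbD_{-1}$, with the characteristic-zero hypothesis playing the same role as in \Cref{lem:BGa_categorically_perfect}.

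The truncation check is essentially a corollary of \Cref{prop:mapping_from_B_connected}; the main obstacle will be to verify the finiteness conditions on $\bbD_{-1}$ and the $+$-Beck–Chevalley equivalence for $u$, since these are precisely what is needed to apply \Cref{prop:universal_criterion_formally_etale} and conclude that the relative cotangent complex of $u^\ast$ vanishes. Combined with the truncation equivalence and the deformation-theoretic properties inherited from $X$, this yields the desired equivalence $\bfMap(\BGa, X) \xrightarrow{\simeq} \mathsf T[-1]X$.
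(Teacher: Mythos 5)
Your skeleton (equivalence on classical truncations plus formal \'etaleness, using nilcompleteness and infinitesimal cohesion of the mapping stacks) is exactly the paper's, and your truncation step is identical to the paper's: both reduce to showing that $\Map(S\times\BGa,X)\to\Map(S,X)$ is an equivalence for underived affine $S$ and invoke \cref{prop:mapping_from_B_connected} for the connected group $\bbG_a$. (Your justification that $\trunc(\bfMap(\bbD_{-1},X))\simeq\trunc(X)$ via ``the truncation of the pushout collapses'' is loose --- truncation does not commute with the pushout defining $\bbD_{-1}$ --- but the conclusion is correct because $\bbL_X[1]$ is $1$-connective, so $\Sym(\bbL_X[1])$ has the same $\pi_0$ as $\cO_X$.)

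Where you genuinely diverge is the formal \'etaleness of $u^\ast$. The paper simply cites \cite[Proposition 5.22]{Naef_Safronov} for this and is done. You instead propose to reprove it internally via \cref{prop:universal_criterion_formally_etale}, which requires (i) $\otimes$-universality and categorical perfectness of $\bbD_{-1}$, and (ii) the $+$-Beck--Chevalley equivalence $p_+u^\ast\xrightarrow{\sim}q_+$. This route is viable and is in fact the strategy the paper itself deploys for the harder map $\aff_r\colon S^1\to\BGa\times\BCr$ in \cref{prop:HKR_DM_etale}, so it buys a self-contained argument at the cost of the computation you only gesture at. Be aware that this is real work rather than bookkeeping: for (ii) you would reduce, by a generation argument as in \cref{lem:BGa_categorically_perfect}, to the structure sheaf, where the statement amounts to the characteristic-zero identification $q_\ast\cO_{\BGa}\simeq p_\ast\cO_{\bbD_{-1}}\simeq k\oplus k[-1]$ (i.e.\ that $u$ is the affinization map, cf.\ the remark that $\bbD_{-1}\simeq\hatBGa$), followed by dualization to pass from $\ast$- to $+$-pushforwards; and for (i) you would need to verify that $\Mod_k\times_{\Mod_{k[\varepsilon]}}\Mod_k$ is dualizable and that the resulting $p_+$ exists, which you assert but do not check. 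As written, your proof is a correct plan with these two verifications outstanding; the paper's citation short-circuits both.
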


\begin{proof}
	It is shown in \cite[Proposition 5.22]{Naef_Safronov} that the map $u^\ast$ is formally étale.
	It suffices therefore to argue that $u^\ast$ is an isomorphism on truncations: this will imply that $\bfMap(\BGa, X)$ is geometric, and that $u^\ast$ is an equivalence.
	For this, it is enough to argue that for any underived test scheme $S \in \Aff_k$, the morphism induced by precomposition with $S \to S \times \BGa$
	\begin{equation}\label{eq:prop:unipotent_loops_DM}
		\Map(S \times \BGa, X) \longrightarrow \Map(S,X)
	\end{equation}
	is an equivalence.
	Since $\mathbb G_a$ is connected, this follows from \cref{prop:mapping_from_B_connected}.
\end{proof}

\subsection{The HKR equivalence for derived \DM stacks}

Consider the natural morphism
\[ \hataff\coloneqq(\aff, \varpi) \colon S^1 \longrightarrow \BGa \times \hatcircle \ . \]
See \eqref{eqn:S1toS1hat} and \eqref{eqn:AffinizationS1} for the definitions of $\varpi$ and $\aff$.
For any derived \DM stack $X$ locally almost of finite presentation, precomposition with $\hataff$ induces a natural transformation
\[ \bfMap(\BGa \times \hatcircle, X) \longrightarrow \bfMap(S^1, X)=\loopstack X \ . \]
We can furthermore rewrite
\[ \bfMap(\BGa \times \hatcircle, X) \simeq \bfMap(\BGa, \bfMap(\hatcircle, X)) \simeq \bfMap(\BGa, \inertiaDM X) \ . \]
Since $\inertiaDM X$ is again a derived \DM stack by \cref{thm:basics_inertia}-(4), \cref{prop:unipotent_loops_DM} provides an equivalence:
\[ \bfMap(\BGa, \inertiaDM X) \simeq \mathsf T[-1] \inertiaDM X \ . \]
The goal of this section is to prove the following result.

\begin{thm}[HKR for derived DM stacks]
\label{thm:HKR_DM}
Let $X$ be a derived \DM stack locally almost of finite presentation, the comparison map
	\[ \hataff^\ast \colon \mathsf T[-1] \inertiaDM X \longrightarrow \loopstack X \]
	is an equivalence of derived stacks over $X$.
\end{thm}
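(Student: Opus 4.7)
The strategy I would follow is to show that $\hataff^\ast$ is formally étale and induces an equivalence on classical truncations; standard derived algebraic geometry then forces it to be an equivalence, since both sides are nilcomplete, infinitesimally cohesive derived Artin stacks admitting cotangent complexes. For the truncation half, I would combine $\trunc(\mathsf T[-1] \inertiaDM X) \simeq \trunc(\inertiaDM X)$ (via the projection to the base, which is always a truncation equivalence for a shifted tangent) with $\trunc(\iota_\varpi) \colon \trunc(\inertiaDM X) \xrightarrow{\sim} \trunc(\loopstack X)$ from \cref{thm:basics_inertia}(4). To identify $\trunc(\hataff^\ast)$ with this composite, one notes that precomposing $\hataff^\ast$ with the zero section $\inertiaDM X \hookrightarrow \mathsf T[-1] \inertiaDM X$ amounts to factoring the $\BGa$-direction through a point, so that only the $\varpi$-factor of $\hataff = (\aff, \varpi)$ survives. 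Thus the composite equals $\iota_\varpi$, which is a truncation equivalence.

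For the formal étaleness I would first pass to the finite-level stacks. Because $\inertiaDM X = \colim_r \inertiarth X$ is a filtered colimit along open-and-closed immersions by \cref{thm:basics_inertia}(3), and since $\mathsf T[-1] = \bfMap(\bbD_{-1}, -)$ respects such colimits, one obtains $\mathsf T[-1] \inertiaDM X \simeq \colim_r \mathsf T[-1] \inertiarth X$ as an ascending union of open substacks. Both $\mathbb L_{\mathsf T[-1] \inertiaDM X}$ and the pullback of $\mathbb L_{\loopstack X}$ restrict to their counterparts on each piece, so it suffices to prove that each finite-level map $\hataff_r^\ast \colon \mathsf T[-1] \inertiarth X \to \loopstack X$, induced by $(\aff, \varpi_r) \colon S^1 \to \BGa \times \BCr$, is formally étale. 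For this I would invoke \cref{prop:universal_criterion_formally_etale}: the source $S^1$ and the target $\BGa \times \BCr$ are both $\otimes$-universal, categorically quasi-compact, and categorically perfect by \cref{eg:tensor_universal}(2), \cref{cor:BGa_tensor_universal_categorically_qc}, \cref{prop:BGa_categorically_perfect}, and \cref{lem:finiteness_BCr} (together with the observation that these finiteness properties are stable under products of $\otimes$-universal stacks). What remains is the $+$-Beck-Chevalley equivalence.

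This Beck-Chevalley verification is the main obstacle. Using the character decomposition $\QCoh(\BGa \times \BCr) \simeq \prod_{\zeta \in \mu_r} \QCoh(\BGa)$ from \eqref{eq:character_decomposition_BCr}, the functor $(\aff, \varpi_r)^\ast$ sends $\cF$ in the $\zeta$-summand to $\aff^\ast \cF \otimes k_\zeta \in \QCoh(S^1) \simeq \mathrm{Rep}(\Z)$, where the generator of $\Z$ acts as the unipotent action on $\aff^\ast \cF$ coming from the $\mathbb G_a$-structure, twisted by multiplication by $\zeta$. For $\zeta = 1$ the action is purely unipotent and the computation reduces to the $+$-Beck-Chevalley for $\aff \colon S^1 \to \BGa$, which follows by dualizing the $\ast$-Beck-Chevalley equivalence of \cref{lem:BGa_categorically_perfect} on perfect complexes and extending via \cref{lem:categorical_properness}. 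For $\zeta \neq 1$ the eigenvalues of the $\Z$-action are all distinct from $1$, so $g - 1$ acts invertibly and the derived coinvariants vanish; this matches $q_+^{\BGa \times \BCr}$, which by the computation in \cref{lem:inertiaDM_etale_transitions} also vanishes on $\zeta$-components with $\zeta \neq 1$. The interplay between the unipotent monodromy of the affinization and the character decomposition of $\BCr$ is the conceptual heart of the argument, and once it is in hand both formal étaleness and the truncation equivalence combine to yield the theorem.
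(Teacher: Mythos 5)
Your proposal is correct and follows essentially the same route as the paper: formal étaleness of $\hataff^\ast$ via the $+$-Beck--Chevalley equivalence for $(\aff,\varpi_r)\colon S^1\to\BGa\times\BCr$ (checked summand-by-summand in the character decomposition of $\QCoh(\BCr)$, with the $\zeta\neq 1$ pieces killed by the invertibility of $1-\zeta$ and the $\zeta=1$ piece reduced to the affinization statement for $\BGa$), combined with the truncation equivalence from \cref{thm:basics_inertia}-(4) and the fact that a formally étale map of geometric derived stacks inducing an isomorphism on truncations is an equivalence. Your extra elaborations — the reduction to finite levels along the open-and-closed immersions and the identification of the composite with the zero section with $\iota_\varpi$ — are steps the paper leaves implicit, and they are sound.
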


We denote by $\aff_r$ the natural morphism $(\aff, \varpi_r) \colon S^1 \longrightarrow \BGa \times \BCr$.
\begin{prop}\label{prop:HKR_DM_etale}
	Fix a positive integer $r>0$ and write $p \colon S^1 \to \Spec(k)$ and $$q_r \colon \BGa \times \BCr \longrightarrow  \Spec(k)$$ for the structural morphisms.
	The $+$-Beck-Chevalley transformation
	\[ p_+ \aff_r^\ast \longrightarrow q_{r,+} \ . \]
	induced by $\aff_r \colon S^1 \longrightarrow \BGa \times \BCr$ is an equivalence.
\end{prop}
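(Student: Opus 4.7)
The plan is to reduce the statement to a pointwise check on compact generators of $\QCoh(\BGa \times \BCr)$, by combining the character decomposition of $\QCoh(\BCr)$ with (the dual of) \cref{lem:BGa_categorically_perfect} and a cohomology vanishing on $S^1$.

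First, I would verify that $\BGa \times \BCr$ is $\otimes$-universal and categorically perfect, by combining \cref{cor:BGa_tensor_universal_categorically_qc}, \cref{prop:BGa_categorically_perfect}, and \cref{lem:finiteness_BCr}. By functoriality of the tensor product in $\PrL_k$, the Künneth equivalence $\QCoh(\BGa \times \BCr) \simeq \QCoh(\BGa) \otimes \QCoh(\BCr)$ identifies $q_{r,+}$ with $q_+ \otimes q'_+$, where $q' \colon \BCr \to \Spec(k)$ is the structural morphism of $\BCr$. Invoking the character decomposition $\QCoh(\BCr) \simeq \prod_{\zeta \in \mu_r} \Mod_k$ from \cref{lem:finiteness_BCr}, together with the fact that $\cO_\BGa$ generates $\QCoh(\BGa)$ (as follows from \cref{prop:BGa_compactly_generated}), the family $\{\cO_\BGa \boxtimes k_\zeta\}_{\zeta \in \mu_r}$ is a set of compact generators of $\QCoh(\BGa \times \BCr)$, where $k_\zeta$ denotes the $\zeta$-character line. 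Since both $p_+ \circ \aff_r^\ast$ and $q_{r,+}$ are colimit-preserving, it suffices to verify that the Beck--Chevalley transformation is an equivalence on these generators.

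Writing $L_\zeta \coloneqq \varpi_r^\ast(k_\zeta) \in \QCoh(S^1)$, the BC transformation evaluated on $\cO_\BGa \boxtimes k_\zeta$ becomes a map $p_+(L_\zeta) \to q_+(\cO_\BGa) \otimes q'_+(k_\zeta)$. When $\zeta = 1$, we have $L_1 \simeq \cO_{S^1}$ and $q'_+(k_1) \simeq k$, so the transformation reduces to the comparison $p_+(\cO_{S^1}) \to q_+(\cO_\BGa)$; via the perfect duality formula $p_+ \simeq (p_\ast((-)^\vee))^\vee$ of \cref{lem:categorical_properness}, this identifies with the dual of the affinization equivalence $q_\ast \cO_\BGa \simeq p_\ast \cO_{S^1}$ supplied by \cref{lem:BGa_categorically_perfect}, hence is an equivalence. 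When $\zeta \neq 1$, the right-hand side vanishes since $q'_+(k_\zeta) = 0$. The left-hand side also vanishes: under the identification $\QCoh(S^1) \simeq \Fun(\mathsf{B}\Z, \Mod_k)$, the object $L_\zeta$ is the one-dimensional $\Z$-representation with generator acting by the scalar $\zeta$, so $p_\ast(L_{\zeta^{-1}}) \simeq \mathrm{fib}(1 - \zeta^{-1} \cdot \colon k \to k) \simeq 0$ in characteristic zero, whence $p_+(L_\zeta) \simeq (p_\ast(L_{\zeta^{-1}}))^\vee = 0$.

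The main subtlety I anticipate is ensuring that the Beck--Chevalley transformation splits cleanly along the Künneth and character decompositions, which amounts to a coherence/naturality check at the level of adjoints in $\PrL_k$; once this is in place, the pointwise computation above completes the argument.
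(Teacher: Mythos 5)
Your argument is correct and follows essentially the same route as the paper's proof: reduce via the K\"unneth decomposition and the character decomposition of $\QCoh(\BCr)$ to the generators $\cO_{\BGa} \boxtimes k_\zeta$, identify the $\zeta=1$ case with (the dual of) the affinization equivalence, and kill the $\zeta \neq 1$ cases by the explicit monodromy computation. The only cosmetic differences are that the paper justifies the reduction to $\cO_{\BGa}$ via the functor $\theta$ of \cref{lem:BGa_categorically_perfect} rather than asserting outright that $\cO_{\BGa}$ is a single compact generator of $\QCoh(\BGa)$ (which is true, but is not literally what \cref{prop:BGa_compactly_generated} states), and that it computes $p_+ \aff_r^\ast(\cO_{\BGa}\boxtimes k_\zeta)$ directly as $\cofib(1-\zeta\colon k \to k)$ instead of dualizing $p_\ast(L_{\zeta^{-1}})$.
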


\begin{proof}
	Observe that both source and target of the $+$-Beck-Chevalley transformation commute with colimits.
	In addition
	\[ \QCoh(\BGa \times \BCr) \simeq \QCoh(\BGa) \otimes \QCoh(\BCr) \ , \]
	and since we are in characteristic zero, both $\BGa$ and $\BCr$ are perfect stacks.
	In particular, it suffices to prove that for $\cF \in \Perf(\BGa)$ and $\cG \in \Perf(\BCr)$, the $+$-Beck-Chevalley transformation is an equivalence on $\cF \boxtimes \cG$.
	Fix $\cG \in \Perf(\BCr)$.
	Reasoning as in the proof of \cref{lem:BGa_categorically_perfect}, with the help of the functor $\theta$, we see that it is enough to argue that
	\[ p_+ \aff_r^\ast( \cO_{\BGa} \boxtimes \cG ) \longrightarrow q_{r,+}(\cO_{\BGa} \boxtimes \cG ) \]
	is an equivalence.
	Now, \cite[Theorem 5.4]{Bergh_Schnurer} (see also \cite[Theorem 5.25]{Binda_Porta_Azumaya} for a proof in the $\infty$-categorical setting) supplies a canonical identification
	\[ \Perf(\BCr) \simeq \prod_{\zeta \in \mu_r} \Perf_\zeta(\BCr) \ , \]
	where $\Perf_\zeta(\BCr)$ denotes the full subcategory of $\zeta$-homogeneous objects.
	We have further identifications $\Perf_\zeta(\BCr) \simeq \Perf_k$, and we write $k_\zeta$ for $k$ seen as an element in $\Perf_\zeta(\BCr)$.
	Notice that $\cO_{\BGa} \boxtimes k_\zeta$ is sent by $\aff_r^\ast$ to the local system having $k$ as stalk and monodromy given by multiplication by $\zeta$. We still denote this local system by $k_{\zeta}$.
 Hence $$p_+\aff_r^\ast(\cO_{\BGa} \boxtimes k_\zeta)\simeq p_+(k_\zeta)=p_*(k_{\zeta^{-1}})^\vee \ .$$ 
	In particular, $p_+ \aff_r^\ast( \cO_{\BGa} \boxtimes k_\zeta )$ is the colimit
	\[ \begin{tikzcd}
		k \oplus k \arrow{r}{(1,-\zeta)} \arrow{d}{(1,-1)} & k \arrow{d} \\
		k \arrow{r} & p_+ \aff_r^\ast( \cO_{\BGa} \boxtimes k_\zeta ) \ .
	\end{tikzcd} \]
	From here, we immediately obtain the identification
	\[ p_+ \aff_r^\ast( \cO_{\BGa} \boxtimes k_\zeta ) \simeq \cofib( 1 - \zeta \colon k \to k ) \ , \]
	This implies that it is zero whenever $\zeta \ne 1$.
	We are therefore reduced to check the statement for $k_1 \simeq \cO_{\BCr}$, and in this case the statement is obvious.
\end{proof}

We are now ready for to prove our main result.

\begin{proof}[Proof of \cref{thm:HKR_DM}]
	Combining \Cref{prop:HKR_DM_etale} and \Cref{prop:universal_criterion_formally_etale}, we deduce that $\hataff^\ast$ is formally étale.
	As a consequence of \cref{thm:basics_inertia}, both source and target are derived \DM stacks, and therefore it is sufficient to prove that $\hataff^\ast$ induces an equivalence on truncations.
	This is guaranteed by \cref{thm:basics_inertia}-(4).
\end{proof}

\begin{rem}[HKR in positive characteristics]
\label{rempos}
    The HKR isomorphism does not hold in positive characteristics in general, even for schemes. In particular our Theorem \ref{thm:HKR_DM} cannot hold in that setting, as stated. 
    Let us briefly summarize what is known in that direction:
    \begin{itemize}
        \item The HKR isomorphism holds inconditionally for smooth affine schemes in any characteristics. 
    \item The HKR isomorphism holds for smooth projective schemes of dimension less or equal to $p$ over $\mathbb{F}_p$ 
    \cite{yekutieli2002continuous}, \cite{antieau2020remark}.
        \item There are examples of schemes of dimension $2p$ over $\mathbb{F}_p$ for which the HKR isomorphism fails \cite{antieau2021counterexamples}. 
    \end{itemize}

    Let us explain what the main issues are from the standpoint of our argument. 
    Our formulation of the HKR equivalence, Theorem \ref{thm:HKR_DM}, states that there is a natural map  
 \[ \hataff^\ast \colon \mathsf T[-1] \inertiaDM X \longrightarrow \loopstack X \]
 and that this map is an equivalence. The definition of $\hataff^\ast$ rests on two ingredients. The first ingredient is 
 the natural map
    $$
    S^1 \to \BGa
    $$
    which was constructed in Recollection \ref{recollection:affinization_S1}. This map exists in any characteristics, however it induces an equivalence between the affinization of $S^1$ and $\BGa$ only in characteristic $0$.\footnote{One way to see why this fails is  that these two stacks have different coherent cohomology. The coherent cohomology of $\BGa$ is equivalent to the free commutative algebra in degree $-1$, which in characteristic $p$ has non-vanishing cohomology in all degrees; whereas the coherent cohomology of $S^1$ coincides with singular cohomology, which is concentrated in degree $0$ and $1$.} 
    The second ingredient is   Proposition 
\ref{prop:unipotent_loops_DM}. This also fails in positive characteristics, as $\BGa$ corepresents the shifted tangent $\mathsf T[-1] X$ only in characteristic $0$. 
Thus, it is only in characteristic $0$ that one can meaningfully define the map $\hataff^\ast$;  let alone prove that it is an equivalence. 
    In the setting of affine schemes, a replacement of the HKR isomorphism which holds in any characteristics is the \emph{HKR filtration}, which was studied  in \cite{MRT}. In a precise sense, the existence of the HKR isomorphism is equivalent to the triviality of the HKR filtration (which holds for instance in characteristic $0$). In forthcoming work, we shall explain how to extend the HKR filtration of \cite{MRT} to the Hochschild homology of derived DM stacks. The key ingredient will be given by variants of the filtered circle studied in \cite{MRT}, which are adapted to the study of DM stacks. 
\end{rem}

\subsection{Multiplicative HKR isomorphism for Hochschild homology}

Recall that for a derived \DM stack $X$, its \textit{Hochschild homology} is defined as the following complex of $k$-modules, viewed as an object in the stable $\infty$-category $\Mod_k$:
\begin{equation}
	\HH_*(X)\coloneqq\mathsf \Gamma(X\times X, \Delta_*\mathcal{O}_X\otimes \Delta_*\mathcal{O}_X),
\end{equation}
where $\Delta\colon X\to X\times X$ is the diagonal morphism.
As it happens for schemes, derived base change for the derived pullback square
\begin{equation*}
    \xymatrix{
    \loopstack X \ar[d]_{p} \ar[r]^{p}& X \ar[d]^{\Delta}\\
    X\ar[r]_-{\Delta} & X\times X\ .
    }
\end{equation*}
yields the usual interpretation of $\HH_\ast(X)$ as functions on $\loopstack X$:
\[ \HH_{*}(X) \simeq \mathsf \Gamma(X, \Delta^*\Delta_*\mathcal{O}_X) \simeq \mathsf \Gamma(\loopstack X, \mathcal{O}_{\loopstack X}). \]
In particular, it inherits an algebra structure coming from the right hand side.
In more classical terms, for any integer $i$, the $i$-th \textit{Hochschild homology group} of $X$ is defined as
\begin{equation*}
	\HH_i(X)\coloneqq H^{-i}(\HH_*(X))\simeq H^{-i}(X\times X, \Delta_*\mathcal{O}_X\otimes \Delta_*\mathcal{O}_X)\simeq H^{-i}(\loopstack X, \mathcal{O}_{\loopstack X}).
\end{equation*}
Consequently, $\bigoplus_i \HH_i(X)$ admits a natural graded algebra structure.

\begin{thm}
\label{thm:HKR-HH}
Let $X$ be a derived Deligne--Mumford stack locally almost of finite presentation. Then we have an equivalence in the $\infty$-category $\mathsf{Alg}(\Mod_k)$: 
\begin{equation}
	\HH_{*}(X) \simeq \mathsf\Gamma(\inertiaDM X, \Sym(\bbL_{\inertiaDM X}[1])).
\end{equation}
where the Hochschild homology $\HH_{*}(X)$ is equipped with its natural algebra structure as derived functions on $\loopstack X$ and $\mathsf\Gamma(\inertiaDM X, \Sym(\bbL_{\inertiaDM X}[1]))$ is equipped with the natural algebra structure induced from the algebra structure on the symmetric algebra.
\end{thm}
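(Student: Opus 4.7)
The plan is to derive this corollary directly from the stacky HKR equivalence of \Cref{thm:HKR_DM}, by applying $\Gamma(-,\cO)$ to both sides and unwinding the definition of the shifted tangent stack. The whole argument is a chain of identifications of $E_\infty$-algebras; passing to cohomology at the end gives the stated isomorphism of graded algebras.

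More concretely, I would first invoke the identification
\[ \HH_{-\ast}(X) \simeq H^\ast(\loopstack X, \cO_{\loopstack X}) \]
already recorded just before the statement. Since $\loopstack X$ is a derived stack, $\cO_{\loopstack X}$ is naturally a commutative algebra object in $\QCoh(\loopstack X)$, and its derived global sections form an $E_\infty$-algebra whose cohomology is the graded algebra structure on $\HH_{-\ast}(X)$ appearing in the statement. Next, I would apply \Cref{thm:HKR_DM} to identify $\loopstack X$ with $\mathsf T[-1]\inertiaDM X$ as derived stacks over $X$; this induces an equivalence of $E_\infty$-algebras
\[ \Gamma(\loopstack X, \cO_{\loopstack X}) \simeq \Gamma(\mathsf T[-1]\inertiaDM X, \cO_{\mathsf T[-1]\inertiaDM X}). \]
Finally, by the very definition $\mathsf T[-1]\inertiaDM X = \Spec_{\inertiaDM X}\bigl(\Sym_{\cO_{\inertiaDM X}}(\bbL_{\inertiaDM X}[1])\bigr)$, the relative affineness yields
\[ \Gamma(\mathsf T[-1]\inertiaDM X, \cO_{\mathsf T[-1]\inertiaDM X}) \simeq \Gamma\bigl(\inertiaDM X, \Sym(\bbL_{\inertiaDM X}[1])\bigr) \]
as $E_\infty$-algebras, the algebra structure on the right coming from the symmetric algebra structure on the sheaf $\Sym(\bbL_{\inertiaDM X}[1])$. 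Taking cohomology gives the desired graded algebra isomorphism.

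The main point to verify, and the only place where one must be careful, is that all three identifications respect the algebra structures. For the first, this is standard: under base change, the composition
\[ \Delta^\ast \Delta_\ast \cO_X \simeq \pi_\ast \cO_{\loopstack X} \]
is an equivalence of algebras, and the resulting algebra structure on $H^\ast(\loopstack X, \cO_{\loopstack X})$ coincides with the one induced from $\Delta_\ast \cO_X \otimes^{\bbL} \Delta_\ast \cO_X \to \Delta_\ast \cO_X$. For the second, the equivalence $\hataff^\ast$ provided by \Cref{thm:HKR_DM} is by construction an equivalence of derived stacks, so pullback along it is a symmetric monoidal equivalence on quasi-coherent complexes and in particular preserves commutative algebra objects and their global sections. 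For the third, relative $\Spec$ gives an equivalence at the level of commutative algebra objects, so the algebra structure on the left agrees tautologically with that on $\Sym(\bbL_{\inertiaDM X}[1])$. There is no substantial obstacle left: the argument is essentially a concatenation of equivalences of $E_\infty$-algebras, with the algebraic content already absorbed into \Cref{thm:HKR_DM}.
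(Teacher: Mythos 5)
Your proposal is correct and follows essentially the same route as the paper: invoke the geometric HKR equivalence of \cref{thm:HKR_DM}, take derived global functions, and use the relative affineness of $\mathsf T[-1]\inertiaDM X \to \inertiaDM X$ to identify the pushforward of the structure sheaf with $\Sym(\bbL_{\inertiaDM X}[1])$ as algebras. The extra care you take in checking multiplicativity at each step is consistent with, and slightly more explicit than, the paper's argument.
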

\begin{proof}
	Write $\pi\colon \mathsf T[-1] \inertiaDM X\to\inertiaDM X$ for the natural projection.
    Since the equivalence of \Cref{thm:HKR_DM} is over $X$, it induces a canonical equivalence $\pi_*(\mathcal{O}_{\mathsf T[-1] \inertiaDM X})\simeq \Sym(\bbL_{\inertiaDM X}[1])$ in $\mathsf{Alg}(\QCoh(X))$.
    Applying the lax-monoidal $\infty$-functor of global sections, we conclude.
\end{proof}

\subsection{HKR isomorphism for Hochschild cohomology}
\Cref{thm:HKR_DM} also allows to obtain a statement for Hochschild \emph{co}homology.
To formulate it let us denote by
\[ p \colon \loopstack X \longrightarrow X \ , \qquad q \colon \mathsf T[-1]\inertiaDM X \longrightarrow X \]
the natural projections.

\begin{notation}
    Let $f \colon Y \to X$ be a morphism.
    We denote by
    \[ \mathsf{Coh}^{\mathsf b}(Y/X) \]
    the full subcategory of $\QCoh(Y)$ spanned by almost perfect complexes on $Y$ having finite tor-amplitude relative to $X$.
\end{notation}

\begin{eg}
    Assume that $X$ is a smooth \DM stack and that $f \colon Y \to X$ is representable by quasi-compact algebraic spaces.
    Then unraveling the definitions  we see that $\mathsf{Coh}^{\mathsf b}(Y/X)$ canonically coincide with $\mathsf{Coh}^{\mathsf b}(Y)$.
\end{eg}

The morphisms $p$ and $q$ induce well-defined morphisms
\[ p_\ast \colon \Ind( \mathsf{Coh}^{\mathsf b}(\loopstack X/X)) \longrightarrow \mathsf{QCoh}(X) \ , \qquad q_\ast \colon \Ind( \mathsf{Coh}^{\mathsf b}(\mathsf T[-1]\inertiaDM X/X)) \longrightarrow \QCoh(X) \ .  \]
By construction, both these functors commute with (filtered and hence all) colimits.
Furthermore, since $p$ and $q$ are proper, the definition of $\mathsf{Coh}^{\mathsf b}(-/X)$ implies that $p_\ast$ and $q_\ast$ preserve compact objects.
It follows that they both admit continuous right adjoints that we denote
\[ p^! \colon \QCoh(X) \longrightarrow \Ind(\mathsf{Coh}^{\mathsf b}(\loopstack X / X)) \ , \qquad q^! \colon \QCoh(X) \longrightarrow \Ind(\mathsf{Coh}^{\mathsf b}(\mathsf T[-1]\inertiaDM X / X)) \ . \]
In general, we set
\[ \mathcal{HH}^\ast(X) \coloneqq p_\ast p^!(\cO_X) \ , \]
and we refer to it as the \textit{Hochschild cohomology sheaf} of $X$, whose (derived) global sections recover the \textit{Hochschild cohomology} of $X$:
\[ \mathsf{HH}^*(X)\coloneqq \mathsf{\Gamma}(X, \mathcal{HH}^*(X))=\mathsf{\Gamma}(\loopstack X, p^!\cO_X).\]

\begin{cor}
\label{Hochcoh}
Let $X$ be a derived DM stack locally almost of finite presentation.
    There is a canonical isomorphism in $\QCoh(X)$
    \[ \mathcal{HH}^\ast(X) \simeq q_\ast q^!(\cO_X)\ , \]
    and a canonical isomorphism in the stable $\infty$-category $\Mod_k$
    \[\mathsf{HH}^\ast(X) \simeq 
    \mathsf{\Gamma}(\mathsf T[-1] \inertiaDM X,q^!(\cO_X))\ . \]
    If $X$ is moreover lci, then  we have canonical isomorphisms
    \begin{align}
        \label{eqn:HHForLCI}
        \begin{split}
             \mathsf{HH}^\ast(X)&\simeq \mathsf{\Gamma} (\inertiaDM X, \Sym(\mathbb{L}_{\inertiaDM X}[1])\otimes i^*\omega_X^\vee[-\dim(X)])\\
        &\simeq \mathsf{\Gamma} (\inertiaDM X, \Sym(\mathbb{T}_{\inertiaDM X}[-1])\otimes \det \mathsf{N}[-c]),
        \end{split}
    \end{align}
    where $\omega_X$ is the dualizing sheaf of $X$, $i\colon \inertiaDM X\to X$ is the canonical morphism, 
    $$\mathsf N\coloneqq\cofib(\mathbb{T}_{\inertiaDM X}\to i^*\mathbb{T}_X) \ , \quad  c\coloneqq\dim(X)-\dim(\inertiaDM X) \ , $$
    and $\dim(-)$ is the dimension viewed as a locally constant $\mathbb{Z}$-valued function. 
\end{cor}
\begin{proof}
    Since $p_\ast$ and $q_\ast$ are canonically identified by \cref{thm:HKR_DM}, we immediately obtain the first assertion. Applying $\mathsf\Gamma$, it yields the second assertion. 
    Assume now that $X$ is a lci derived DM stack of finite presentation. By \Cref{thm:basics_inertia}, $\inertiaDM X=\inertiarth X$ for some $r$, which is also lci. We decompose the morphism $q$ into the composition $$q\colon \mathsf T[-1]\inertiaDM X\xrightarrow{\pi} \inertiaDM X \xrightarrow{i} X.$$
    The standard fiber sequence of cotangent complexes for the map $\pi$ shows that the dualizing sheaf $\omega_{\mathsf T[-1]\inertiaDM X}$ is trivial. As a result, the relative dualizing sheaf $\omega_{q}$ is isomorphic to $q^*\omega_X^\vee$ and $\dim(q)=-\dim(X)$. Therefore,
    \[q^!\mathcal{O}_X\simeq \omega_q[\dim(q)]\simeq q^*\omega_X^\vee[-\dim(X)]\]
    We conclude by the following computation: 
    \begin{align*}
        \mathsf{HH}^\ast(X) &\simeq 
    \mathsf{\Gamma}(\mathsf T[-1] \inertiaDM X,q^!(\cO_X))\\
    &\simeq \mathsf{\Gamma}(\mathsf T[-1] \inertiaDM X, q^*\omega_X^\vee[-\dim(X)] )\\
    &\simeq \mathsf{\Gamma}(\inertiaDM X, \pi_*\pi^*i^*\omega_X^\vee[-\dim(X)] )\\
    &\simeq \mathsf{\Gamma}(\inertiaDM X, \Sym(\mathbb{L}_{\inertiaDM X}[1])\otimes  i^*\omega_X^\vee[-\dim(X)] ),
    \end{align*}
    where the last step uses the projection formula.
    Finally, the last isomorphism in the statement 
\end{proof}

In the classical case, Corollary \ref{Hochcoh} can be reformulated in a much more concrete way, generalizing  \cite[Corollary 1.17]{Arinkin_Caldararu_Hablicsek} to the case where $X$ is not necessarily a global quotient by a finite group. The following consequence is one instance:

\begin{cor}
\label{cor:HochschildCohomology}
Assume that $X$ is a smooth (hence underived) DM stack. Consider the connected-component decomposition of $\inertiaDM X = \inertia X$: 
$$
\mathsf I X = \bigsqcup_{i \in I} Z_i,
$$
where $I$ is the set of connected components of $\inertia X$. 
Let $c_i=\dim (X)-\dim (Z_i)$, and let $\omega_{{Z_i}/X}$ be the relative dualizing sheaf of the natural map $Z_i \to X$.
Then we have an isomorphism of graded vector spaces:
    $$
    \mathsf{HH}^\ast(X) \simeq \bigoplus_{i \in I} \bigoplus_{p + q= \ast} H^{p-c_i}\Big(Z_i, \bigwedge^q \mathsf T_{Z_i} \otimes \omega_{Z_i/X}\Big).
    $$
\end{cor}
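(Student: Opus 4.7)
The plan is to derive Corollary~\ref{cor:HochschildCohomology} directly from the formula~\eqref{eqn:HHForLCI} of Corollary~\ref{Hochcoh}. Since a smooth DM stack is lci and of finite type, this formula applies; moreover, by \Cref{cor:InertiaDM-Smooth} the derived orbifold inertia $\inertiaDM X$ is underived and coincides with the classical inertia stack $\inertia X = \bigsqcup_{i \in I} Z_i$, with each $Z_i$ smooth. Writing $j_i \colon Z_i \to X$ for the natural projection, the target formula is to be extracted from
\[
\mathsf{HH}^\ast(X) \simeq \bigoplus_{i \in I} \mathsf{\Gamma}\bigl(Z_i, \Sym(\Omega^1_{Z_i}[1]) \otimes j_i^*\omega_X^\vee[-\dim X]\bigr)
\]
by repackaging the integrand in terms of polyvector fields on $Z_i$ and the relative dualizing sheaf of $j_i$.

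First, I would use the smoothness of $Z_i$ to decompose $\Sym(\Omega^1_{Z_i}[1]) = \bigoplus_{q \geq 0} (\wedge^q \Omega^1_{Z_i})[q]$, which is valid because $\Omega^1_{Z_i}$ is a vector bundle in cohomological degree zero. Next, applying the standard identification $\omega_{Z_i} \simeq j_i^*\omega_X \otimes \omega_{Z_i/X}$ (which holds because $j_i$ is representable and both source and target are smooth), I would rewrite $j_i^*\omega_X^\vee \simeq \omega_{Z_i}^\vee \otimes \omega_{Z_i/X}$. Then I would apply the wedge-power duality isomorphism $\wedge^q \Omega^1_{Z_i} \otimes \omega_{Z_i}^\vee \simeq \wedge^{d_i - q} T_{Z_i}$, with $d_i = \dim Z_i$, to convert the $q$-th summand into $\wedge^{d_i - q} T_{Z_i} \otimes \omega_{Z_i/X}$, placed in cohomological shift $[q - \dim X]$.

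Finally, taking hypercohomology, the degree-$\ast$ piece of $\mathsf{\Gamma}\bigl(Z_i, \wedge^{d_i - q} T_{Z_i} \otimes \omega_{Z_i/X}[q - \dim X]\bigr)$ is $H^{\ast + q - \dim X}(Z_i, \wedge^{d_i - q} T_{Z_i} \otimes \omega_{Z_i/X})$. Setting $q' = d_i - q$ and $p = \ast - q'$, and using $c_i = \dim X - d_i$, the exponent rewrites as $p - c_i$, and the constraint becomes $p + q' = \ast$, exactly as in the statement. Summing over $i$ and over all $(p, q')$ with $p + q' = \ast$ yields the claimed decomposition.

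The main obstacle is really just the bookkeeping: one must carefully track the cohomological shifts, unambiguously interpret $\omega_X$ in~\eqref{eqn:HHForLCI} as the dualizing line bundle (so that $\omega_X^\vee[-\dim X]$ encodes the dualizing complex on a smooth target), and verify that the wedge-power duality identifications extend to smooth DM stacks, which can be checked étale-locally where everything reduces to the scheme case. Once these conventions are fixed, the result follows by the direct computation outlined above.
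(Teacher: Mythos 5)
Your proposal is correct and follows essentially the same route as the paper: apply \eqref{eqn:HHForLCI}, then use the relative canonical bundle formula $j_i^*\omega_X^\vee \simeq \omega_{Z_i}^\vee \otimes \omega_{Z_i/X}$ together with the duality $\bigwedge^q \Omega^1_{Z_i} \otimes \omega_{Z_i}^\vee \simeq \bigwedge^{\dim(Z_i)-q}\mathsf T_{Z_i}$, and reindex. Your index bookkeeping checks out, so this is just a more detailed write-up of the paper's own argument.
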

\begin{proof}
    We apply \eqref{eqn:HHForLCI} and use the relative canonical bundle formula $\omega_X^\vee|_{Z_i}\simeq \omega_{Z_i}^\vee\otimes \omega_{Z_i/X}$ and the canonical isomorphism $\Omega_{Z_i}^p\otimes \omega_{Z_i}^\vee\simeq \bigwedge^{\dim(Z_i)-p}\mathsf{T}_{Z_i}$.
\end{proof}

\section{Circle action and de Rham differential}

In the case of derived schemes over a field $k$ of characteristic zero, the HKR equivalence $\mathsf T[-1]X \simeq \loopstack X$ is more structured than a plain equivalence of derived schemes over $X$:
\begin{enumerate}\itemsep=0.2cm
    \item there is a canonical action of $S^1$ on $\loopstack X$ and of $\BGa$ on $\mathsf T[-1]X$;

    \item the morphism $\aff \colon S^1 \to \BGa$ is canonically a morphism of groups, and it allows to see $\mathsf T[-1]X$ as a $S^1$-derived scheme;

    \item the HKR equivalence can be promoted to an equivalence of $S^1$-derived schemes.
\end{enumerate}
Furthermore, in concrete terms, the action of $S^1$ on $\loopstack X$ encodes the Connes operator on Hochschild homology, whereas the action of $\BGa$ on $\mathsf T[-1]X$ encodes the de Rham differential.
The goal of this section is to generalize these statements to the \DM setting.

\subsection{$S^1$-objects and mixed objects}
Before discussing the details of the generalization, let us briefly recall how one can make the previous statements precise:

\begin{recollection}\label{recollection:Toen_Vezzosi}
    In \cite{Toen_Vezzosi_S1_algebras}, Toën and Vezzosi introduced two $\infty$-categories:
    \begin{enumerate}\itemsep=0.2cm
        \item The $\infty$-category of \emph{$S^1$-algebras}, that can be defined as
        \[S^1\textrm{-}\CAlg_k \coloneqq \mathsf{CoMod}_{\mathsf C^\ast_{\mathsf{sing}}(S^1;k)}(\CAlg_k) \ . \]
        \item The $\infty$-category of \emph{mixed algebras}, that can be defined as
        \[ \varepsilon \textrm{-} \CAlg_k \coloneqq \mathsf{CoMod}_{k\oplus k[-1]}(\CAlg_k) \ , \]
        where $k \oplus k[-1]$ is the split square-zero extension equipped with its natural Hopf structure.
    \end{enumerate}
    Taking these as definitions, it is clear that any equivalence
    \[ \phi \colon \mathsf C^\ast_{\mathsf{sing}}(S^1;k) \simeq k \oplus k[-1] \]
    as \emph{Hopf algebras} induces an equivalence $A_\phi$ fitting in the following commutative diagram
    \begin{equation}
    \label{TVS^10dR}
    \begin{tikzcd}[column sep=small]
    S^1\textrm{-}\mathsf{CAlg}_k \arrow{rr}{A_\phi}[swap]{\sim} \arrow{dr}[swap]{U_{S^1}} & & \varepsilon\textrm{-}\mathsf{CAlg}_k \arrow{dl}{U_\varepsilon} \\
        {} & \mathsf{CAlg}_k \ ,
    \end{tikzcd} 
    \end{equation}
    where $U_{S^1}$ and $U_\varepsilon$ are the natural forgetful functors.
    From this point of view (which is slightly different from the one taken in \cite{Toen_Vezzosi_S1_algebras}), the main result of \cite{Toen_Vezzosi_S1_algebras} can be stated as follows: the comonadic functors $U_{S^1}$ and $U_\varepsilon$ are also monadic, and their left adjoints $L_{S^1}$ and $L_\varepsilon$ admit the following description. For any $A\in \mathsf{CAlg}_k$,
    \begin{enumerate}[(a)]\itemsep=0.2cm
        \item $U_{S^1} L_{S^1}(A) \simeq S^1 \otimes A \simeq A \otimes_{A \otimes_k A} A$, so $L_{S^1}(A)$ is canonically identified with the Hochschild homology complex of $A$ (considered only up to \emph{quasi-isomorphism}) with the free $S^1$-action.

        \item $U_\varepsilon L_\varepsilon(A) \simeq \mathsf{DR}(A) \coloneqq \Sym_A( \mathbb L_{A/k}[1] )$, so $L_\varepsilon(A)$ is identified with the derived de Rham algebra of $A$, with mixed structure given by the de Rham differential.
    \end{enumerate}
    It follows that $A_\phi$ exchanges the $S^1$-action (a.k.a.\ $\mathsf C^\ast_{\mathsf{sing}}(S^1;k)$-coaction) on Hochschild homology of $A$ with the mixed structure (a.k.a.\ $(k\oplus k[-1])$-coaction) on $\mathsf{DR}(A)$ given by the de Rham differential. 
\end{recollection}

\begin{rem}[Connes' operator] \label{rem:Connes}
    Let $A \in S^1\textrm{-}\mathsf {CAlg}_k$.
    The $S^1$-action has an underlying coaction morphism
    \[ \gamma \colon A \longrightarrow A \otimes \mathsf C^\ast_{\mathsf{sing}}(S^1; k) \ . \]
    Under the formality $\phi \colon \mathsf C^\ast_{\mathsf{sing}}(S^1; k) \simeq k \oplus k[-1]$, we can associate to $\gamma$ a map $d_\gamma \colon A \to A \oplus A[-1]$, which, as a consequence of the counitality, is a shifted self-derivation of $A$.
    We refer to $d_\gamma$ as the \emph{Connes operator} associated to the $S^1$-structure on $A$.
    Forgetting all the higher coherences, the equivalence $A_\phi$ of \cref{recollection:Toen_Vezzosi} sends the pair $(A,\gamma)$ to the pair $(A,d_\gamma)$.
\end{rem}

\begin{rem}[Choice of formality]
    Although the above statements work for any choice of formality $\phi$, in order to globalize them it is convenient to fix one such choice of geometric origin, that we now describe.
Note that Robalo proved in \cite{Robalo-ChoicesHKR} that the space of choices of formality is essentially contractible.
Fix a base field $k$ (that we will soon assume of characteristic zero) and consider the morphism \eqref{eqn:AffinizationS1}
\[ \aff \colon S^1 \longrightarrow \BGa \ . \]
Inspection reveals that it is induced via delooping by the morphism of commutative groups $\mathbb Z \to \mathbb G_{a}$.
In particular, $\aff$ is canonically a morphism of derived group stacks, i.e.\ in $\mathsf{Mon}_{\mathbb E_1}^{\mathsf{gp}}(\dSt_k)$.
On the other hand, the morphism \eqref{eq:D_-1_to_BGa} $u \colon \bbD_{-1} \to  \BGa$ induces a morphism
\[ \bar{u} \colon \Aff(\bbD_{-1} ) \longrightarrow \BGa \ , \]
which, when the characteristic of $k$ is \emph{zero}, is an equivalence.
\end{rem}

\begin{lem}
    The space
    \[ \{\BGa\} \times_{(\dSt_k)_\ast^\simeq} \mathsf{Mon}_{\mathbb E_\infty}^{\mathsf{gp}}(\dSt_k)^\simeq  \]
    of $\mathbb E_\infty$-group structures on $\BGa$ is contractible.
\end{lem}

\begin{proof}
    The canonical atlas $\Spec(k) \to \BGa$ induces an isomorphism on $\pi_0$, and therefore $$\BGa \in \dSt_k^{\ge 1} \ .$$
	Notice furthermore that the inclusion $\dSt_k^{\geqslant 1} \hookrightarrow \dSt_k$ commutes with products.
	We can therefore consider the following ladder of pullbacks
	\[ \begin{tikzcd}
		X \arrow{r} \arrow{d} & \mathrm{Mon}_{\mathbb E_\infty}^{\mathrm{gp}}(\dSt_k^{\ge 1}) \arrow{r} \arrow{d} & \mathrm{Mon}_{\mathbb E_\infty}^{\mathrm{gp}}(\dSt_k) \arrow{d} \\
		\{*\} \arrow{r}{\BGa} \arrow{r} & (\dSt_k)_\ast^{\ge 1} \arrow{r} & (\dSt_k)_\ast
	\end{tikzcd} \ , \]
    and conclude that
    \[ X^\simeq \simeq \{\BGa\} \times_{(\dSt_k)_\ast^\simeq} \mathsf{Mon}_{\mathbb E_\infty}^{\mathsf{gp}}(\dSt_k)^\simeq \]
    is the space of $\mathbb E_\infty$-group structures on $\BGa$.
    We can focus on the square on the left.
	Consider the commutative rectangle
	\[ \begin{tikzcd}
		X \arrow{r} \arrow{d} & \mathrm{Mon}_{\mathbb E_\infty}^{\mathrm{gp}}(\dSt_k^{\ge 1}) \arrow{r}{\Omega} \arrow{d} & \mathrm{Mon}_{\mathbb E_\infty}^{\mathrm{gp}}( \mathrm{Mon}_{\mathbb E_1}^{\mathrm{gp}}(\dSt_k) ) \arrow{d} \\
		\{*\} \arrow{r}{\BGa} & (\dSt_k)_\ast^{\ge 1} \arrow{r}{\Omega} & \mathrm{Mon}_{\mathbb E_1}^{\mathrm{gp}}(\dSt_k) 
	\end{tikzcd} \]
	May's delooping theorem \cite[Theorem 5.2.6.15]{HA} implies that the horizontal morphism in the square on the right are equivalences.
	In particular, the square in question is a pullback.
	As a consequence, it is enough to compute the outer pullback.
	
	Observe now that $\Omega( \BGa ) \simeq \bbG_a$ and that this is a discrete object in $\dSt_k$.
	Furthermore, the induced $\mathbb E_1$-group structure on $\bbG_a$ coincides with the additive one.
	We now observe that, since $\bbG_a$ is discrete and since the $\mathbb E_1$-structure is fixed, being $\mathbb E_\infty$ is now a property rather than a structure.
	In other words, we see that $X$ is either empty or contractible.
	As the additive group structure on $\bbG_a$ is commutative, we see that it is indeed the latter case.
\end{proof}

\begin{cor}\label{cor:formality_Hopf}
    In characteristic zero, the zig-zag
    \[ \begin{tikzcd}
        \bbD_{-1} \arrow{r}{u} & \BGa & S^1 \arrow{l}[swap]{\aff}
    \end{tikzcd} \]
    induces a formality equivalence
    \[ \phi \colon \mathsf C^\ast_{\mathsf{sing}}(S^1;k) \simeq k \oplus k[-1] \]
    as cocommutative Hopf algebras.
    Equivalently, upon passing to the affinization it induces an equivalence of derived $\mathbb E_\infty$-group stacks
    \[ \begin{tikzcd}
        \Aff(\bbD_{-1}) \arrow{r}{\overline{u}} & \BGa & \Aff(S^1) \arrow{l}[swap]{\overline{\aff}} \ .
    \end{tikzcd} \]
\end{cor}

In virtue of the above discussion, we can consider the zig-zag
\begin{equation}\label{eq:formality_zig_zag}
    \begin{tikzcd}
        \Aff(\bbD_{-1}) \arrow{r}{\overline{u}} & \BGa & S^1 \arrow{l}[swap]{\aff}
    \end{tikzcd}
\end{equation}
in $\mathsf{Mon}_{\mathbb E_\infty}^{\mathsf{gp}}(\dSt_k)$, where moreover $\overline{u}$ is an equivalence when $k$ has characteristic zero.
We now consider the canonical action on the left of $\BGa$ on $\BGa \times \hatcircle$, and the induced action of $S^1$ on $\BGa \times \hatcircle$ that operates as the identity on $\hatcircle$.

Let now $X$ be a derived \DM stack  locally almost of finite presentation.
Thanks to \cref{thm:HKR_DM}, we know that the morphism $\hataff \colon S^1 \to \BGa \times \hatcircle$ induces the following commutative triangle:
\begin{equation}\label{eq:HKR_over_X}
    \begin{tikzcd}[column sep=small]
        \mathsf T[-1]\inertiaDM X \arrow{rr}{\hataff^\ast} \arrow{dr}[swap]{\pi_1} & & \loopstack X \arrow{dl}{\pi_2} \\
        {} & X
    \end{tikzcd} \ ,
\end{equation}
Moreover, since $\loopstack X \simeq \bfMap(S^1,X)$, we see that $\loopstack X$ carries a canonical $(S^1 \times X)$-action over $X$.
Similarly, $\mathsf T[-1]\inertiaDM X \simeq \Map(\BGa \times \hatcircle, X)$ carries a canonical $(\BGa \times X)$-action over $X$, induced by the left action of $\BGa$ on $\BGa \times \hatcircle$.
The morphism of groups $\aff \colon S^1 \to \BGa$ allows to see $\mathsf T[-1]\inertiaDM X$ as equipped with a $(S^1 \times X)$-action over $X$.

\begin{prop} \label{S1derham}
    The equivalence $\hataff^\ast$ supplied by \cref{thm:HKR_DM} can be promoted to an equivalence of $(S^1 \times X)$-derived \DM stacks over $X$.
\end{prop}

\begin{proof}
    Consider $\BGa \times \hatcircle$ together with its natural left $\BGa$-action.
    The morphism of groups $\aff \colon S^1 \to \BGa$ allows to see $\BGa \times \hatcircle$ as a left $S^1$-module in $\dSt_k$, and the map $\hataff \colon S^1 \to \BGa \times \hatcircle$ becomes a morphism of left $S^1$-modules (where $S^1$ is considered as a left module over itself, with action given by the group multiplication).
    Since the $(S^1 \times X)$-actions on $\mathsf T[-1]\inertiaDM X$ and on $\loopstack X$ are induced by the universal properties of the mapping stacks, the conclusion follows from this analysis.
\end{proof}

Let us make the above statement more explicit.
To begin with, we observe:

\begin{rem}\label{recollection:S1_algebras_sheaves}
    If $\mathscr Y$ is any $\infty$-topos, we can apply $(-) \otimes \mathscr Y$ to  diagram (\ref{TVS^10dR}) to obtain an equivalence
    \[ \begin{tikzcd}[column sep=small]
        S^1\textrm{-}\mathsf{CAlg}_k(\mathscr Y) \arrow{rr}{A_\phi}[swap]{\sim} \arrow{dr}[swap]{U_{S^1}} & & \varepsilon\textrm{-}\mathsf{CAlg}_k(\mathscr Y) \arrow{dl}{U_\varepsilon} \\
        {} & \mathsf{CAlg}_k(\mathscr Y) \ ,
    \end{tikzcd} \]
    where now $\mathsf{CAlg}_k(\mathscr Y)$ and its variants denote the $\infty$-categories of sheaves with values in $\mathsf{CAlg}_k$ (or in its variants).
\end{rem}

Applying \Cref{recollection:S1_algebras_sheaves} to the small étale topos $\mathsf I\mathscr X$ of $\mathsf T[-1]\inertiaDM X$ and invoking \cref{recollection:Toen_Vezzosi}-(b), we deduce that the $\BGa$-action on $\mathsf T[-1]\inertiaDM X$ corresponds to the $(k \oplus k[-1])$-coaction on $\cO_{\mathsf T[-1]\inertiaDM X} \simeq \mathsf{DR}(\cO_{\inertiaDM X})$ given by the de Rham differential of $\inertiaDM X$.
Similarly, the $S^1$-action on $\loopstack X$ corresponds to the free $\mathsf C^\ast_{\mathsf{sing}}(S^1;k)$-coaction on $\cO_{\loopstack X} \simeq S^1 \otimes \cO_{\inertiaDM X}$.
In particular:

\begin{cor}
\label{corS^1=dR}
    In the setting of diagram \eqref{eq:HKR_over_X}, the Connes operator on $\pi_{2,\ast}(\cO_{\loopstack X})$ corresponds, via the equivalence of \cref{recollection:S1_algebras_sheaves}, to the action of the de Rham differential on $\pi_{1,\ast}(\cO_{\mathsf T[-1]\inertiaDM X})$.
\end{cor}


\subsection{Cyclic homology and variants}
Based on Corollary \ref{corS^1=dR}, we establish an HKR-type theorem for the  
cyclic homology, negative cyclic homology, and periodic cyclic homology  of a derived DM stack $X$ in terms of the derived de Rham complex of $\inertiaDM X$.  
More precisely, the equivalence $A_\phi$ from diagram (\ref{TVS^10dR}), generalized to an arbitrary base topos in Remark \ref{recollection:S1_algebras_sheaves} (for our purpose, $\mathscr Y$ is the small \'etale topos $\mathsf I\mathscr X$ of $\mathsf T[-1]\inertiaDM X$), is  compatible with several natural functors as shown in the following diagram, where all functors are derived. The reference for these facts is \cite{hoyois2015homotopy}. 
\begin{equation}
\label{diag:Intertwinings}
  \xymatrix{
S^1\textrm{-}\mathsf{CAlg}_k(\mathscr Y) \ar@/_3pc/[dd]_-{\mathsf{\Gamma}(-)} \ar[rrrr]^-{A_\phi}_-{\sim}   \ar[d] & &&& \varepsilon\textrm{-}\mathsf{CAlg}_k(\mathscr Y)  \ar[d] 
\ar@/^3pc/[dd]^-{\mathsf{\Gamma}(-)} 
\\
S^1\textrm{-}\mathsf{CAlg}_k \ar[rrrr]^-{A_\phi}_-{\sim}  \ar[d]_-{F}  & & & &
\varepsilon\textrm{-}\mathsf{CAlg}_k \ar[d]^-{F} \\
 S^1\textrm{-}\mathsf{Mod}_k \ar[rrrr]^-{A_\phi}_-{\sim}  \ar@/^1.5pc/[ddrr]^-{a} \ar@/_2.5pc/[ddrr]_-{b} 
& & & &  \varepsilon\textrm{-}\mathsf{Mod}_k \ar@/^2.5pc/[ddll]^-{ \beta } \ar@/_1.5pc/[ddll]_-{ \alpha }  
\\ &&&& \\
 & & \mathsf{Mod}_k \ar@/_/[uurr]_-{ \mathsf{triv}_\varepsilon} \ar@/^/[uull]^-{ \mathsf{triv}_{S^1}} & & 
}  
\end{equation}
Here, $\mathsf \Gamma(-)$ is the functor of derived global sections, $F$ is the functor forgetting the algebra structure, the functors $\mathsf{triv}_{S^1}$ and 
$\mathsf{triv}_{\varepsilon}$ 
send a $k$-module to its trivial representation.  The functors $a$ and $b$ are respectively the (homotopy) $S^1$-invariants and $S^1$-coinvariants 
    $$a \coloneqq (-)^{S^1} \quad \text{and} \quad 
    b \coloneqq (-)_{S^1}\ .
    $$
    The functors $\alpha$ and $\beta$ are given by the following formulas
    $$
    \alpha \coloneqq \Map_{\varepsilon\textrm{-}\mathsf{Mod}_k}(k, -) \quad \text{and} \quad 
    \beta \coloneqq k \otimes_{k[\varepsilon]} (-)\ .
    $$
We have adjunctions
$$
b \dashv \mathsf{triv}_{S^1} \dashv 
a \quad \text{and} \quad 
\beta \dashv \mathsf{triv}_{\varepsilon} \dashv \alpha\ . 
$$
The top two squares are commutative on the nose. As for the bottom triangle, $A_\phi$ intertwines $\mathsf{triv}_{S^1}$ and 
$\mathsf{triv}_{\varepsilon}$:
\begin{equation*}
    A_\phi \circ \mathsf{triv}_{S^1} \simeq \mathsf{triv}_{\varepsilon}\ .
\end{equation*}
Hence $A_\phi$ also intertwines their left and right adjoint functors:
\begin{equation}
\label{eqn:IntertwineAdjoints}
    \alpha\circ A_\phi\simeq a \  \text{ and  }\   \beta\circ A_\phi\simeq b\ .
\end{equation}
Further, we have the \emph{norm map}, which are natural transformations
$$
\nu: (-)_{S^1}[1] \Rightarrow (-)^{S^1} \  \text{ and  }\  \nu:  k \otimes_{k[\varepsilon]} (-) [1] \Rightarrow \Map_{\varepsilon\textrm{-}\mathsf{Mod}_k}(k, -).
$$
We refer to the literature for the former, and recall that the latter is induced by the following morphism of $k[\varepsilon]$-bimodules, where $k[1]$ is the 1-dimensional vector space with basis $\varepsilon$:
\begin{equation}
    \label{eqn:NormMapDef}
k[1] \to k[\varepsilon]\ ,
\end{equation}
together with adjunction:
\begin{equation*}
    \Map_k(k[1]\otimes_{k[\varepsilon]} - , \Map_{k[\varepsilon]}(k, -))\simeq \Map_{k[\varepsilon]}(k\otimes_k k[1]\otimes_{k[\varepsilon]} -, - )\simeq \Map_{k[\varepsilon]}(k[1]\otimes_{k[\varepsilon]}- ,k[\varepsilon]\otimes_{k[\varepsilon]}-).
\end{equation*}

\begin{defin}
    Let $X$ be a derived stack. The  \emph{cyclic homology} of $X$ is defined to be the homotopy orbits of the natural $S^1$-action on $\HH_*(X)$:
    $$
\HC(X)\coloneqq (\mathsf{HH}_*(X))_{S^1}\ . 
    $$
    The \emph{negative cyclic homology} of $X$ is defined to be the homotopy fixed points for this action:
    $$
    \HN(X)\coloneqq(\mathsf{HH}_*(X))^{S^1}\ . 
    $$
    The \emph{periodic cyclic homology} of $X$ is defined to be the Tate fixed points of this action, i.e.~the homotopy cofiber of the norm map:
    $$
    \HP(X)\coloneqq (\mathsf{HH}_*(X))^{t S^1}\coloneqq \cofib( \HC(X)[1] \stackrel{\nu} \longrightarrow \HN(X) )\ . 
    $$
It is shown in  \cite{hoyois2015homotopy} that the above definitions coincide with the classical ones, which can be found for example in  \cite{loday2013cyclic}. 
\end{defin}

Given a derived DM stack $X$, we define the derived de Rham complex of $\inertiaDM X$ as the totalization of the following mixed graded algebra:
\begin{equation}
\label{eqn:DRIX}
    \DR(\inertiaDM X)\coloneqq \left[\cO_{\inertiaDM X} \xrightarrow{\mathsf{d}_{\dR}} \mathbb{L}_{\inertiaDM X}\xrightarrow{\mathsf{d}_{\dR}} \bigwedge^2\mathbb{L}_{\inertiaDM X}\xrightarrow{\mathsf{d}_{\dR}} \cdots \right],
\end{equation}
where $\cO_{\inertiaDM X} $ is placed in cohomological degree 0, $\bigwedge^i\mathbb{L}_{\inertiaDM X}$ stands for the $i$-th derived exterior power the cotangent complex of $\inertiaDM X$, and the mixed structure is given by  the de Rham differential operator $\mathsf{d}_{\dR}$.
The de Rham complex \eqref{eqn:DRIX} is endowed with a natural Hodge filtration given as follows: for $i\geq 0$,
\begin{equation}
\label{eqn:HodgeFil}
    \Fil_{\mathsf{Hdg}}^i\DR(\inertiaDM X)\coloneqq \left[\cdots \to 0 \to \bigwedge^i\mathbb{L}_{\inertiaDM X}\xrightarrow{\mathsf{d}_{\dR}}\bigwedge^{i+1}\mathbb{L}_{\inertiaDM X} \xrightarrow{\mathsf{d}_{\dR}}\cdots \right],
\end{equation}
and $\Fil_{\mathsf{Hdg}}^{i}\DR(\inertiaDM X)=\DR(\inertiaDM X)$ for $i\leq 0$.

\begin{cor}
\label{cor:HCHNHP}
Let $X$ be a derived DM stack  locally almost of finite presentation.
Denote  $\DR\coloneqq\DR(\inertiaDM X)$ the derived de Rham complex of $\inertiaDM X$.
For any integer $i$, let $\DR^{\geq i}\coloneqq \Fil_{\mathsf{Hdg}}^i\DR$ and $\DR^{\leq i}\coloneqq \cofib(\DR^{\geq i} \to \DR)$.
There are canonical isomorphisms of graded vector spaces:
\begin{align*} 
    \HC(X) & \simeq \bigoplus_{i\geq 0}  \mathsf{\Gamma} \left(\inertiaDM X, \DR^{\leq i} [2i]\right)\ ;\\
    \HN(X) &\simeq \prod_{i\in \mathbb{Z}}\mathsf{\Gamma} \left(\inertiaDM X,  \DR^{\geq i} [2i]\right)\simeq  \prod_{i\leq 0} H^*_{\dR}(\inertiaDM X)[2i] \times \prod_{i=1}^\infty \mathsf \Gamma\left(\inertiaDM X, \DR^{\geq i} [2i]\right)\ ;\\
    \HP(X) &\simeq \prod_{i\in \mathbb{Z}} \mathsf{\Gamma} \left(\inertiaDM X, \DR[2i]\right) \simeq \prod_{i\in \mathbb{Z}} H^*_{\dR}(\inertiaDM X)[2i]\ .
\end{align*}
\end{cor}
\begin{proof}
    By the commutative diagram \eqref{diag:Intertwinings}, especially \eqref{eqn:IntertwineAdjoints}, we have 
    \begin{align}
    \begin{split}
        \label{eqn:HCHN}
        \HC(X)& \simeq b( \mathsf \Gamma (\cO_{\loopstack X}))\simeq \beta(A_{\phi}(\mathsf \Gamma (\cO_{\loopstack X})))\simeq  k\otimes_{k[\varepsilon]}\mathsf \Gamma (\inertiaDM X, \Sym(\bbL_{\inertiaDM X}[1]))\ ;\\
        \HN(X)& \simeq a( \mathsf \Gamma (\cO_{\loopstack X}))\simeq \alpha(A_{\phi}(\mathsf \Gamma (\cO_{\loopstack X})))\simeq \mathsf \Map_{k[\varepsilon]}(k, \mathsf \Gamma (\inertiaDM X, \Sym(\bbL_{\inertiaDM X}[1])))\ .
    \end{split}
    \end{align}
    Here, the $k[\varepsilon]$-structure on $\mathsf \Gamma (\Sym(\bbL_{\inertiaDM X}[1]))$ is given by the de Rham differential operator $$\mathsf d_{\dR}\colon \Sym(\bbL_{\inertiaDM X}[1])\to \Sym(\bbL_{\inertiaDM X}[1])[-1].$$
    Resolving the object $k\in \varepsilon\textrm{-}\Mod_k$ by the mixed module
    \begin{equation*}
        \cdots \to k[\varepsilon][2]\xrightarrow{\cdot \varepsilon} k[\varepsilon][1]\xrightarrow{\cdot \varepsilon} k[\varepsilon],
    \end{equation*}
    and combining with \eqref{eqn:HCHN},
   we obtain the claimed formulas for $\HC(X)$ and $\HN(X)$.  

   Combining the description of the norm map in \eqref{eqn:NormMapDef} with the above resolution of $k$, one obtains that 
   the norm map $\HC(X)[1]\to \HN(X)$ is induced by:
   \begin{equation}
       \bigoplus_{i> 0} \DR^{\leq i-1}[2i-1] \xrightarrow{\mathsf d_{\dR}}\prod_{i\in \mathbb{Z}} \DR^{\geq i}[2i]
   \end{equation}
   whose cofiber is $\prod_{i\in \mathbb{Z}} \DR[2i]$, since for $i\leq 0$, $\DR^{\geq i}[2i] = \DR[2i]$. The formula for $\HP(X)$ follows. 
\end{proof}

The next  result  is an immediate consequence of Corollary \ref{cor:HCHNHP}. We  record it as an independent statement as it provides an even more explicit description of the periodic cyclic homology of $X$, which might be of independent interest. 
\begin{cor}
Let $X$ be a derived DM stack locally almost of finite presentation. Let 
$ \inertia   \trunc(X) $ 
be the classical inertia stack of $\trunc(X)$, and let 
$ \, 
|\inertia   \trunc(X) |
$ 
be its coarse moduli space. 
Then there are equivalences  
$$\HP(X)\simeq \HP(\trunc(X)) \simeq \prod_{i\in \mathbb{Z}} H^*_{\dR}(|\inertia   \trunc(X) |)[2i].$$
\end{cor}
\begin{proof}
  Note that  periodic cyclic homology, as de Rham cohomology, is nil-invariant. This implies the first equivalence. Next, by the  last equivalence of Corollary \ref{cor:HCHNHP}, we can write 
   $$
    \HP(\trunc(X)) \simeq  \prod_{i\in \mathbb{Z}} H^*_{\dR}(\inertia \trunc(X))[2i]
   $$
   So the only thing left to prove, is that the latter is equivalent to the $\mathbb{Z}_2$-periodic de Rham cohomology of the coarse moduli space. This is a general property of the de Rham cohomology of DM stacks in characteristic $0$. Let us sketch the argument for a general DM stack $Y$, which we can assume underived. By \'etale descent for de Rham cohomology we reduce to the global quotient case, 
   $ \, 
   Y = [S/G]
   $, 
   where $S$ is a scheme and $G$ is a finite group. Let us denote $|Y|$ the coarse moduli space of $Y$. Using again \'etale descent, we see that the de Rham cohomology of $Y$ is equivalent to the $G$-invariant part of the de Rham cohomology of $S$. But the latter is equivalent to the de Rham cohomology of the coarse moduli space  $|Y|$. See for instance Proposition 36 of \cite{behrend2004cohomology} for a similar argument. 
\end{proof}

\begin{rem}
We can give also an alternative, Koszul dual, description of $\HC(X)$, $\HN(X)$ and $\HP(X)$. Again, this follows easily  from the commutative  diagram \eqref{diag:Intertwinings}, but we shall not include all the details.  Let $\, k[u] \simeq k^{S^1}$ 
 be the free commutative algebra on a generator $u$ of degree two. 
Then we have isomorphisms 
$$
\HC(X) \simeq  \big ( \mathsf{\Gamma} ( \Sym(\bbL_{\inertiaDM X}[1])) \otimes_k \left ( k(u)/u \cdot k[u] \right ) , u \cdot \ddR \big ) \ , 
$$
    $$
\HN(X) \simeq  \big ( \mathsf{\Gamma} ( \Sym(\bbL_{\inertiaDM X}[1])) \otimes_k k[u] , u \cdot \ddR \big ) \ .
$$
Periodic cyclic homology is equivalent to the Tate fixed points of the $S^1$-action on $\mathsf{HH}(X)$, 
 $$
 \HP(X) \simeq (\mathsf{HH}(X))^{S^1} \otimes_{k[u]}k[u, u^{-1}]$$
and in particular we have that 
$$
\HP(X) \simeq  \big ( \mathsf{\Gamma} ( \Sym(\bbL_{\inertiaDM X}[1])) \otimes_k k(u) , u \cdot \ddR \big ) \ .
$$ 
\end{rem}



\section{Examples and applications}
\label{sec:examples}
In this section, we investigate some interesting classes of derived DM stacks arising naturally in geometry, where our main results can be applied to compute the free loop space and the Hochschild (co)homology. 
We first study the global quotient of a derived scheme by a finite group, recovering and generalizing the main results of Arinkin--\caldararu--Hablicsek \cite{Arinkin_Caldararu_Hablicsek}. Then to illustrate the usefulness of the generality of \Cref{thm:HKR-HH}, we compute the Hochschild (co)homology of some DM stacks that are not accessible with the results in \cite{Arinkin_Caldararu_Hablicsek}, for example weighted projective lines, root stacks, quotient stacks by algebraic groups, etc.

\subsection{Global quotients: possibly derived and singular}
\label{subsec:GlobalQuotients}
We compute the orbifold inertia of the global quotient of a derived scheme by a finite group, in terms of the (derived) fixed loci of this action. Fixed loci for classical stacks have been studied by Romagny \cite{Romagny-GroupAction}. The derived structure leads to some subtleties in the definition of fixed locus. Let us first briefly clarify this. We refer to \cite[Appendix A]{AKLPR} for more details (but beware  we denote by $Y^G$ what is denoted by $Y^{hG}$ in \textit{loc.~cit}).

\begin{defin}[Derived fixed loci]
\label{genuinefixed}
Let $Y$ be a derived stack equipped with an action of an algebraic group $G$. Hence we have a structural morphism $[Y/G]\to \mathsf{B}G$. The \emph{derived fixed locus} of the $G$-action on $Y$ is defined as the following \textit{section stack}:
\[ Y^G \coloneqq \mathbf{Sect}_{\mathsf BG}([Y/G]) \coloneqq \Spec(k) \times_{\bfMap(\mathsf BG, \mathsf BG)} \bfMap(\mathsf BG, [Y/G]) \ , \]
where $\Spec(k)\to \bfMap(\mathsf BG, \mathsf BG)$ is given by the identity of $\mathsf BG$, and the mapping stacks are computed in the category of derived stacks.

Let $g$ be a finite-order automorphism of a derived stack $Y$. Let $\langle g \rangle$ be the finite cyclic group generated by $g$. 
Then the \textit{derived fixed locus} of $g$ on $Y$,
denote by $Y^g$,
is defined to be the  derived fixed locus of the action of $\langle g\rangle$ on $Y$:
$$Y^g\coloneqq Y^{\langle g \rangle}=\mathbf{Sect}_{\mathsf B\langle g\rangle}([Y/\langle g\rangle]).$$ 
\end{defin}

\begin{rem}[Weil restriction]
    Let $p\colon \mathsf BG\to \Spec(k)$ be the structural morphism. The base-change functor $p^*\colon \dSt_k \to \dSt_{\mathsf BG}$ admits a right adjoint functor $p_* \colon\dSt_{\mathsf BG}\to \dSt_k$, called the \textit{Weil restriction} (\cite[\S 19.1.2]{SAG}). 
    For any derived stack $Y$ equipped with an action of $G$, we get an object $[Y/G]\in \dSt_{\mathsf BG}$.
    By \cite[Proposition 19.1.2.2 \& Construction 19.1.2.3]{SAG}, the derived fixed loci $Y^G$ can be equivalently interpreted via the Weil restriction:
    \begin{equation}
        Y^G\simeq p_*([Y/G]).
    \end{equation}
    Using atlas of $\mathsf BG$ and of $[Y/G]$, one can show that 
    \begin{equation}
       Y^G \simeq \lim_{n \in \mathbf \Delta} \bfMap(G^n, Y) \ .
    \end{equation}
\end{rem}

\begin{rem}[Residual action by centralizer]
    Let the notation be as in \Cref{genuinefixed}. For any $g\in G$, let $Z(g)\coloneqq Z_G(g)$ be the centralizer of $g$ in $G$. Then the derived fixed locus $Y^g$ admits a canonical residual action of $Z(g)$. Indeed, the action of $Z(g)$ on $Y$ clearly descends to an action on $[Y/\langle g\rangle]$ that respects the structural morphism $[Y/\langle g\rangle]\to \mathsf B\langle g\rangle$. Hence $\bfMap(\mathsf B\langle g\rangle, [Y/\langle g\rangle])$ admits a $Z(g)$-action that respects the structural morphism to $\bfMap(\mathsf B\langle g\rangle, \mathsf B\langle g\rangle)$. By definition of $Y^g$ as the section stack, we get a $Z(g)$-action on it.
\end{rem}

\begin{rem}[Conjugation action]
\label{rem:ConjugationAction}
Let $G$ be a finite group acting on a derived stack $Y$. For any $g, h\in G$, it is clear from the definition that we have a canonical isomorphism 
\begin{equation}
    h.\colon Y^g \xrightarrow{\simeq} Y^{hgh^{-1}},
\end{equation}
which is given by $x\mapsto  h.x$ on the level of functor of points.
These isomorphisms assemble into the so-called \textit{conjugation} action of $G$ on $\bigsqcup_{g\in G} Y^g$, where $G$ acts on the indexing set by conjugation.
\end{rem}

 \begin{prop}
 \label{prop:InertiaGlobalQuotient}
 Let $Y$ be a finitely presented derived scheme equipped with an action of a finite group $G$. Then we have isomorphisms
 $$\inertiaDM [Y/G] \simeq \bigsqcup_{[g] \in G/G} [Y^g/Z(g)]\simeq \left[\left(\bigsqcup_{g\in G}Y^g \right)/ G\right],$$
 where $G/G$ denotes the set of conjugacy classes of $G$, and in the last stack, the $G$-action is the conjugation action of \Cref{rem:ConjugationAction}.
  \end{prop}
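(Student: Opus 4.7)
My plan is to first compute the $r$-th orbifold inertia $\inertiarth[Y/G] = \bfMap(\BCr, [Y/G])$ for each positive integer $r$ by stratifying it via the fibration $[Y/G] \to \mathsf{B}G$, and then to take the filtered colimit over $r$ using \cref{thm:basics_inertia}.

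First I would decompose $\bfMap(\BCr, \mathsf{B}G)$. Since $G$ is a finite underived group, this mapping stack is underived and classifies $G$-torsors equipped with a commuting $\mathsf{C}_r$-action. Trivializing the torsor and tracking residual automorphisms yields
\[ \bfMap(\BCr, \mathsf{B}G) \simeq \bigsqcup_{[\phi]} \mathsf{B}Z(\phi(g)), \]
where $[\phi]$ ranges over $G$-conjugacy classes of homomorphisms $\phi \colon \mathsf{C}_r \to G$ and $g$ is a fixed generator of $\mathsf{C}_r$; such classes are indexed by the conjugacy classes $[h] = [\phi(g)] \in G/G$ of elements whose order divides $r$. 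Next I would compute the fiber of $\bfMap(\BCr, [Y/G]) \to \bfMap(\BCr, \mathsf{B}G)$ over $[\phi]$. Pulling $[Y/G] \to \mathsf{B}G$ back along $\phi$ produces $[Y/\mathsf{C}_r]_\phi \to \mathsf{B}\mathsf{C}_r$, where $\mathsf{C}_r$ acts via $\phi$, and the fiber in question is by construction the space of sections of this fibration, which by \cref{genuinefixed} is precisely the genuine fixed locus $Y^{\phi(g)}$. Incorporating the residual $Z(\phi(g))$-action from the previous step then produces
\[ \inertiarth[Y/G] \simeq \bigsqcup_{\substack{[h] \in G/G \\ \mathrm{ord}(h)\mid r}}[Y^h/Z(h)]. \]

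Next I would pass to the colimit. By \cref{thm:basics_inertia}, for $r\mid r'$ the transition map $\inertiarth[Y/G] \to \inertia^{(r')}[Y/G]$ is an open-closed immersion; under our decomposition it is simply the inclusion of components indexed by conjugacy classes whose order divides $r$ into those whose order divides $r'$. Since $G$ is finite, every conjugacy class is captured as soon as $r$ is a multiple of the exponent of $G$, so the filtered colimit stabilizes and yields the first claimed isomorphism. For the second isomorphism, the conjugation action of \cref{rem:ConjugationAction} organizes $\bigsqcup_{h \in G} Y^h$ into $G$-orbits labeled by conjugacy classes, with $G$-stabilizer of the $Y^h$-component equal to $Z(h)$; the stack quotient decomposes accordingly.

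\textbf{Main obstacle.} The principal technical point is the identification, in the derived setting, of the fiber of the mapping stack with the genuine fixed locus $Y^{\phi(g)}$. Classically this is immediate, but derivedly one must verify that both sides satisfy the \emph{same} universal property — that of parametrizing sections of $[Y/\mathsf{C}_r]_\phi \to \mathsf{B}\mathsf{C}_r$ — so that the identification becomes tautological once \cref{genuinefixed} is adopted. Everything else is routine combinatorial bookkeeping on top of this comparison.
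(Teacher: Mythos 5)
Your strategy --- stratifying $\inertiarth [Y/G]$ over $\bfMap(\BCr,\mathsf BG)$ and computing the fibers as section spaces --- is genuinely different from the paper's proof and is viable, but the two steps you present as formal are precisely where derived-geometric input is needed. First, for a non-injective homomorphism $\phi\colon\mathsf C_r\to G$ (i.e.\ for a class $[h]$ with $\mathrm{ord}(h)$ a proper divisor of $r$), the fiber you compute is $\mathbf{Sect}_{\BCr}([Y/\mathsf C_r]_\phi)$, the genuine fixed locus of the $\mathsf C_r$-action \emph{through} $\phi$; this is not tautologically $Y^{h}=Y^{\langle h\rangle}=\mathbf{Sect}_{\mathsf B\langle h\rangle}([Y/\langle h\rangle])$, because restriction along the gerbe $\BCr\to\mathsf B\langle h\rangle$ is not an equivalence of section stacks for free. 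It is one here, but the proof uses characteristic zero (exactness of invariants under the kernel $\mathsf C_{r/\mathrm{ord}(h)}$), essentially the mechanism of \cref{lem:inertiaDM_etale_transitions}; alternatively, invoke \cref{thm:basics_inertia}(3) to reduce to $r=\mathrm{ord}(h)$, where $\phi$ is injective. Second, knowing that the fiber over a $k$-point of the component $\mathsf BZ(h)$ is $Y^h$ only tells you that this component of $\inertiarth[Y/G]$ is $[Y^h/(\text{some } Z(h)\text{-action})]$; identifying that action with the residual action of \cref{rem:ConjugationAction} is a genuine (if routine) functoriality check, not bookkeeping. You should also justify that the classical decomposition of $\bfMap(\BCr,\mathsf BG)$ persists derivedly; this follows because $\mathsf BG$ is formally \'etale, so the mapping stack is underived.

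For comparison, the paper argues in the opposite direction: it never decomposes $\bfMap(\BCr,\mathsf BG)$, but instead builds, for each $g$, an explicit map $[Y^g/Z(g)]\to\inertiarth[Y/G]$ out of the defining pullback square of $Y^g$, shows it is \'etale by a cotangent-complex argument (the point $\Spec(k)\to\bfMap(\mathsf B\langle g\rangle,\mathsf BZ(g))$ is \'etale because the target has vanishing cotangent complex), and then checks that the disjoint union over conjugacy classes is an isomorphism on classical truncations, where the formula is the standard underived computation. That route confines all the component and monodromy bookkeeping to the classical truncation, which is why it sidesteps both issues above; your route, once the two gaps are filled, gives the same statement with the derived content carried fiberwise instead.
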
 
\begin{proof}
The second isomorphism is clear, let us prove the first isomorphism. 
Let $g\in G$ be an element of order $r$. We have pull-back diagrams 
\begin{equation}
\label{diagramfixed}
\begin{gathered}
\xymatrix{ Y^g  \ar[r] \ar[d]& \mathbf{Map}( \mathsf B \langle g \rangle,  [Y/ \langle g \rangle ]) \ar[d] \ar[r] &  \mathbf{Map}(\mathsf B \langle g \rangle,  [Y/ Z(g)  ]) \ar[d]\ \\
\Spec(k)  \ar[r]^-{\id} & \mathbf{Map}(  \mathsf B \langle g \rangle , \mathsf B\langle g \rangle ) \ar[r] & \mathbf{Map}(\mathsf B \langle g \rangle , \mathsf B Z( g ) )}
\end{gathered}
\end{equation}
The left square is a pull-back square is by definition. The right square is a pull-back square since it is obtained by applying  
$\mathbf{Map}(\mathsf B \langle g \rangle,  -)$ to the following pull-back square:
\begin{equation}
\label{diagramfixed}
\begin{gathered}
\xymatrix{ [Y/ \langle g \rangle ] \ar[d] \ar[r] &    [Y/ Z(g)]    \ar[d]  \\
 \mathsf B\langle g \rangle \ar[r] &  \mathsf B Z( g ) }
\end{gathered}
\end{equation}
 Now since the cotangent complex of $\mathbf{Map}( \mathsf B \langle g \rangle, \mathsf B Z(g) )$ is trivial, the bottom horizontal map $$\Spec(k) \rightarrow \mathbf{Map}(\mathsf B \langle g \rangle, \mathsf B Z(g))$$ is \'etale. Therefore the upper horizontal  map  $Y^g  \rightarrow  \mathbf{Map}(\mathsf B \langle g \rangle, [Y/Z(g)])$ is also \'etale.

Now note that the top map factors through the quotient $[Y^g/Z(g)]$. This follows from the fact that $\mathrm{pt} \to  \mathbf{Map}( \mathsf B \langle g \rangle , \mathsf B Z( g ) )$ factors through $\mathsf B Z(g)$. Hence  $$[Y^g/Z(g)] \to  \mathbf{Map}(\mathsf B \langle g \rangle, [Y/ Z(g)  ])  \to \inertia^{(r)} [Y/G]$$  is also \'etale, where $r=|g|$. In order to conclude the proof we only have to show that the above morphisms induce an isomorphism between the truncation of $\inertiaDM [Y/G] $ and $ \bigsqcup_{[g] \in G/G} [\trunc (Y)^g/Z(g)].$ This follows from the following computation 
$$\trunc(\inertiaDM [Y/G] ) \simeq [\trunc Y/G] \times_{ [\trunc Y/G] \times[ \trunc Y/G]} [\trunc Y/G]  \simeq \inertia [\trunc Y/G] \simeq   \bigsqcup_{[g] \in G/G} [\trunc (Y)^g/Z(g)],$$
where we used that the fiber products commute  with  truncation functor $\trunc$.
\end{proof}

The graded $k$-module
$ \ 
\bigoplus_{i \in \mathbb{Z}} \HH_{-i}([Y/G]) 
 $, 
obtained by taking the cohomology of the algebra object $\HH_*([Y/G])$, inherits a natural structure of graded $k$-algebra. The following corollary recovers and generalizes \cite[Corollary 1.7]{Arinkin_Caldararu_Hablicsek}. 
\begin{cor}
\label{cor:HH-GlobalQuotientDerived}
    Let $Y$ be a finitely presented derived scheme equipped with an action of a finite group $G$. Then 
    \begin{enumerate}
        \item 
    We have an isomorphism of graded algebras
    \begin{align*}
                \bigoplus_{i \in \mathbb{Z}} \HH_{-i}([Y/G])&\simeq \bigoplus_{i\in \mathbb{Z}}\left(\bigoplus_{g\in G}\bigoplus_{q-p=i}H^q(Y^g, \Omega^p_{Y^{g}})\right)^G\\
                &\simeq \bigoplus_{i\in \mathbb{Z}}\bigoplus_{[g]\in G/G} \bigoplus_{q-p=i}H^q(Y^g, \Omega_{Y^g}^p)^{Z(g)}.
    \end{align*}
    where $\Omega^p\coloneqq\bigwedge^p\mathbb{L}$ stands for the $p$-th term of the \emph{derived} de Rham complex.
    \item If moreover $Y$ is lci, we have an isomorphism of vector spaces:
    \begin{equation*}
        \HH^i([Y/G])\simeq \bigoplus_{[g]\in G/G}\bigoplus_{q+p=i}H^q(Y^g, \bigwedge^p \mathsf{T}_{Y^g}\otimes \det(\mathsf N_g)[-c_g])^{Z(g)},
    \end{equation*}
    where $\mathsf N_g$ is the normal bundle $\mathsf N_{Y^g/Y}$ and $c_g$ is its rank; in other words, $\mathsf N_g[-1]$ denotes the relative tangent complex of the canonical morphism $Y^g\to Y$.
    
    \end{enumerate}
\end{cor}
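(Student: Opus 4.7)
The plan is to specialize the main theorems of the paper to the case of a global quotient using the explicit description of the orbifold inertia from Proposition~\ref{prop:InertiaGlobalQuotient}.

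For Part (1), apply Theorem~\ref{thm:HKR-HH} to $X = [Y/G]$ to obtain an isomorphism of graded algebras
\[ \HH_{-*}([Y/G]) \simeq H^*\bigl(\inertiaDM[Y/G], \Sym(\bbL_{\inertiaDM[Y/G]}[1])\bigr) \ . \]
Proposition~\ref{prop:InertiaGlobalQuotient} splits the orbifold inertia as $\bigsqcup_{[g]\in G/G} [Y^g/Z(g)]$, so this cohomology decomposes as a sum over conjugacy classes. Since $Z(g)$ is finite and we work in characteristic zero, cohomology of $[Y^g/Z(g)]$ with coefficients pulled back from $Y^g$ computes $Z(g)$-invariants of the corresponding cohomology of $Y^g$; moreover the étale $Z(g)$-torsor $Y^g\to[Y^g/Z(g)]$ identifies $\bbL_{[Y^g/Z(g)]}$ with $\bbL_{Y^g}$ equivariantly, and $\Sym(\bbL_{Y^g}[1]) \simeq \bigoplus_p \Omega^p_{Y^g}[p]$. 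Putting these together yields the second isomorphism. The first isomorphism is the standard identity
\[ \Bigl(\bigoplus_{g\in G} M_g\Bigr)^G \simeq \bigoplus_{[g]\in G/G} M_g^{Z(g)} \]
applied to the $G$-equivariant family $\{H^q(Y^g,\Omega^p_{Y^g})\}_{g\in G}$, whose $G$-structure comes from the conjugation isomorphisms of Remark~\ref{rem:ConjugationAction}. The algebra structure is preserved because $\inertiaDM[Y/G]$ is a disjoint union, so the ring of functions on $\mathsf T[-1]\inertiaDM[Y/G]$ splits componentwise as a product of algebras.

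For Part (2), first observe that since $G$ is finite, the étale cover $Y\to[Y/G]$ shows $[Y/G]$ is lci, and then Theorem~\ref{thm:basics_inertia}(2) together with Proposition~\ref{prop:InertiaGlobalQuotient} implies each $Y^g$ is lci. Corollary~\ref{Hochcoh} then gives
\[ \HH^*([Y/G]) \simeq \Gamma\bigl(\inertiaDM[Y/G], \Sym(\bbL_{\inertiaDM[Y/G]}[1]) \otimes i^*\omega_{[Y/G]}^\vee[-\dim X]\bigr) \ . \]
Decomposing along conjugacy classes as in Part (1), on each component the embedding $[Y^g/Z(g)] \hookrightarrow [Y/G]$ is étale-locally the inclusion $Y^g\hookrightarrow Y$, and the adjunction formula for this lci embedding, tracked with shifts, yields
\[ i^*\omega_{[Y/G]}^\vee[-\dim X]\big|_{Y^g} \simeq \omega_{Y^g}^\vee[-\dim Y^g] \otimes \det(\mathsf N_g)[-c_g] \ . \]
Combining with the derived Serre-duality identity $\Omega^p_{Y^g} \otimes \omega_{Y^g}^\vee \simeq \bigwedge^{\dim Y^g - p}\mathsf T_{Y^g}$, reindexing $p\mapsto \dim Y^g - p$, and taking $Z(g)$-invariants as in Part (1) produces the stated formula.

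The principal technical point is the derived adjunction formula for the lci embedding $Y^g \hookrightarrow Y$: this follows by étale descent from the classical normal-bundle computation used in the proof of Corollary~\ref{cor:HochschildCohomology}, but one must keep track of the $Z(g)$-equivariance throughout so that taking invariants at the end produces the claimed result. One also has to verify that the exterior/symmetric algebra identities remain valid in the derived lci (rather than smooth) setting, which is routine since $\bbL_{Y^g}$ is perfect.
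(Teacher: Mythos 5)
Your proposal is correct and follows exactly the paper's route: the paper's own proof is a one-line citation of Theorem~\ref{thm:HKR-HH} (for part (1)), Corollary~\ref{Hochcoh} (for part (2)), and Proposition~\ref{prop:InertiaGlobalQuotient}, and you simply spell out the intermediate bookkeeping (invariants over the étale torsor $Y^g\to[Y^g/Z(g)]$, the identity $(\bigoplus_g M_g)^G\simeq\bigoplus_{[g]}M_g^{Z(g)}$, and the adjunction/duality manipulations), all of which check out.
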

\begin{proof}
    We apply \Cref{thm:HKR-HH} and \Cref{Hochcoh} (or \Cref{cor:HochschildCohomology}) to $X=[Y/G]$ and combine with \Cref{prop:InertiaGlobalQuotient}.
\end{proof}

We have the following somewhat surprising observation: even for an underived global quotient DM stack $X$, when $X$ is singular, its orbifold inertia $\inertiaDM X$ can carry a non-trivial derived structure. Thus, even in the classical setting, the failure of  being an isomorphism for the natural map $\inertia X \to \inertiaDM X$ contains information on the singularities of the stack. We give here a simple example to illustrate this point:

\begin{eg}
    \label{example:FixedLociBecomeDerived}
    Consider the underived affine scheme $Y \coloneqq \Spec(A)$ where 
    \[ A \coloneqq k[x,y]/ (xy) \ . \]
    Let $G \coloneqq \mathbb{Z}/2 \mathbb{Z} = \{1, \varphi\}$ and consider the  action on $Y$ given  by $\varphi \cdot \overline{x} \coloneqq -\overline{x} $ and $\varphi \cdot \overline{y} \coloneqq -\overline{y}$.
    We claim that $Y^G$ has a non-trivial derived structure.
    In virtue of \cref{prop:InertiaGlobalQuotient}, $\inertiaDM([Y/G])$ also has a non-trivial derived structure.

    \medskip
    
    Recall that since $Y$ is affine, the truncation of $\trunc(Y^G)$ coincides with the spectrum of $A_G$, the (classical) $G$-coinvariants of $A$. A simple computation shows that, as $\operatorname{char}(k)\neq 2$,
    \[ A_G= A/(\overline{x}-(-\overline{x}), \overline{y}-(-\overline{y})) = A/(\overline{x}, \overline{y}) \simeq  k \ . \]
    Hence $\trunc(Y^G)$ is isomorphic to $\Spec (k)$, which corresponds to the unique (reduced) singular point $p_0$ of $Y$.
    We are going to show that $Y^G$ is not equivalent to  $\trunc(Y^G)$, hence it must be derived.

    \medskip

    Notice that the following square, where $f(x, y)\coloneqq xy$,
    \[ \begin{tikzcd}
        Y \arrow{r}{j} \arrow{d} & \mathbb A^2_k \arrow{d}{f} \\
        \Spec(k) \arrow{r}{0} & \mathbb A^1_k
    \end{tikzcd}\]
    is a derived pullback square by the flatness of $f$.
    In particular,
    \[ \mathbb L_Y \simeq \cofib( \delta \colon j^\ast f^\ast \mathbb L_{\mathbb A^1_k} \to j^\ast \mathbb L_{\mathbb A^2_k} ) \ . \]
    Now, $j^\ast f^\ast \mathbb L_{\mathbb A^1_k} \simeq \cO_Y \mathsf dt$ and $j^\ast \mathbb L_{\mathbb A^2_k} \simeq \cO_Y \mathsf dx \oplus \cO_X \mathsf dy$, where $t$ denotes the affine coordinate on $\mathbb A^1_k$, and $x$ and $y$ denote the affine coordinates in $\mathbb A^2_k$.
    The map $\delta$ between them is induced by $\mathsf dt \mapsto \mathsf d(xy) = \overline{x} \mathsf dy + \overline{y} dx $.
    Notice that $\delta$ is injective, but its cofiber is not locally free.
    The fiber at the singular point $p_0$ is the zero map
    \[ k \mathsf dt \longrightarrow k \mathsf dx \oplus k \mathsf dy \ . \]
    Thus,
    \[ \pi_1( p_0^\ast \mathbb L_Y ) \simeq  k \mathsf dt \ . \]
    
    \medskip

    Observe now that letting $G$ act on $\mathbb A^2$ by multiplication by $-1$ on both coordinates and trivially on $\mathbb A^1$, the above diagram becomes $G$-equivariant.
    In particular, $\mathbb L_Y$ acquires a canonical $G$-equivariant structure. It follows from \cite[Corollaries A.27 and A.30]{AKLPR} that
    \[ \mathbb L_{Y^G} \simeq (\mathbb L_Y |_{Y^G})_G \ . \]
    Since $G$ acts trivially on $\mathsf dt$ by construction, the above computation allows to conclude that 
    \[ \pi_1( \mathbb L_{Y^G} ) \simeq k \mathsf dt \ , \]
    whereas $\mathbb L_{\trunc(Y^G)} = 0$. In particular, $Y^G$ has a non-trivial derived structure.
\end{eg}

In addition to \Cref{genuinefixed}, the following notion of fixed locus is also frequently used in derived algebraic geometry\ :
\begin{defin}[Naive derived fixed loci]
\label{def:DerivedFixedLoci}
   Let $Y$ be a derived stack and $g$ an automorphism of $Y$. We define the \textit{naive derived fixed locus} $Y^{\mathbb{R} g}$ via the (derived) pull-back diagram
\begin{equation}
	\xymatrix{
		Y^{\mathbb{R} g} \ar[r] \ar[d] & Y \ar[d]^-\Delta \\
		Y \ar[r]^-{\Delta_g} & Y \times Y
	}
\end{equation}
where $\Delta_g$ is the $g$-twisted diagonal map (i.e.~the graph of the automorphism of $Y$ given by $g$), which is given by the following formula on functor of points:
\begin{equation}
	\Delta_g: Y \to Y \times Y, \quad \Delta_g(y) \coloneqq (y, gy).
\end{equation}
\end{defin}

The following lemma clarifies the relation between the derived fixed loci (\Cref{genuinefixed}) and the naive derived fixed loci (\Cref{def:DerivedFixedLoci})\ :
\begin{lem}\label{lemma: fixed points}
Let $Y$ de a finitely presented derived algebraic space equipped with a finite-order automorphism $g$. Then
the derived fixed locus $Y^g$ is a derived algebraic space, and we have 
$$
\mathsf{T}[-1]Y^g \simeq \loopstack Y^g \simeq Y^{\mathbb{R} g}.
$$
\end{lem}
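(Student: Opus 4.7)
The plan is to derive both equivalences by applying the HKR theorem (\cref{thm:HKR_DM}) twice, using the quotient stack $[Y/\langle g\rangle]$ as a bridge. Set $G = \langle g\rangle \cong \mathsf C_r$; since $G$ is abelian, every element is its own conjugacy class with centralizer $G$, and \cref{prop:InertiaGlobalQuotient} simplifies to $\inertiaDM[Y/G] \simeq \bigsqcup_{h \in G}[Y^h/G]$.

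First I would show that $Y^g$ is a derived scheme. The proof of \cref{prop:InertiaGlobalQuotient} already realises $Y^g$ as the pullback of the open and closed substack $[Y^g/G] \subset \inertiaDM[Y/G]$ along the étale atlas $\Spec k \to \mathsf B G$, so by \cref{thm:basics_inertia} it is at least a derived DM stack. To upgrade this to a derived scheme I would argue locally: since $G$ is finite, $Y$ admits a cover by $G$-invariant derived affine opens $\Spec B$, and unwinding the definition of the section stack gives $Y^g|_{\Spec B} \simeq \Spec(B^{hG})$. In characteristic zero $|G|$ is invertible, so averaging identifies the homotopy invariants $B^{hG}$ with the strict invariants $B^G$, exhibiting $Y^g$ as a union of derived affines.

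Next I would establish $\mathsf T[-1]Y^g \simeq Y^{\mathbb R g}$ by descent from $[Y/G]$. On the loop-space side, a direct computation with the groupoid presentation $G \times Y \rightrightarrows Y$ of $[Y/G]$ yields a decomposition $\loopstack[Y/G] \simeq \bigsqcup_{h \in G}[Y^{\mathbb R h}/G]$ parallel to the one for the orbifold inertia. Both decompositions are detected on $\pi_0$, so the HKR equivalence $\mathsf T[-1]\inertiaDM[Y/G] \simeq \loopstack[Y/G]$ of \cref{thm:HKR_DM}, being an isomorphism on truncations and functorial over $[Y/G]$, must respect the indexing and specialise componentwise to an equivalence $\mathsf T[-1][Y^g/G] \simeq [Y^{\mathbb R g}/G]$ of derived stacks over $\mathsf BG$. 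Pulling back along the formally étale atlas $\Spec k \to \mathsf BG$ — the shifted tangent construction commutes with formally étale base change, and the right-hand side simplifies to $Y^{\mathbb R g}$ by the definition of the quotient stack — one recovers $\mathsf T[-1]Y^g \simeq Y^{\mathbb R g}$.

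Finally, the remaining equivalence $\loopstack Y^g \simeq \mathsf T[-1]Y^g$ is immediate from \cref{thm:HKR_DM} applied to the derived scheme $Y^g$ itself: having trivial stabilizers it satisfies $\inertiaDM Y^g \simeq Y^g$, and the HKR theorem then reduces to the Ben-Zvi--Nadler equivalence recalled in the introduction. The most delicate point is verifying that the HKR equivalence for $[Y/G]$ respects the component decomposition indexed by $G$; this should follow by inspecting the construction of $\hataff^\ast$ together with the functoriality of the equivalence over $X = [Y/G]$, since the index on each side is detected by the induced map to $\bfMap(\hatcircle, \mathsf BG)$.
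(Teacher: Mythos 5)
Your treatment of the two displayed equivalences follows the paper's own route: apply \cref{thm:HKR_DM} to $[Y/G]$ with $G=\langle g\rangle$, match the decomposition $\mathsf T[-1]\bigsqcup_k[Y^{g^k}/G]$ against $\loopstack[Y/G]\simeq\bigsqcup_k[Y^{\mathbb R g^k}/G]$ using that the HKR map is the identity on truncations, descend along $\Spec(k)\to\mathsf BG$ (using that $G$ acts trivially on $Y^g$), and then apply \cref{thm:HKR_DM} once more to $Y^g$ itself. That part is fine.

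The genuine gap is in your first step, the proof that $Y^g$ is a derived scheme. First, a general derived scheme need not admit a cover by $G$-invariant affine opens (this requires, e.g., quasi-projectivity or that every orbit lie in an affine), so the reduction to the affine case is unjustified. Second, and more seriously, the local identification $Y^g|_{\Spec B}\simeq\Spec(B^{hG})$ is wrong. By \cref{genuinefixed}, an $S$-point of $Y^G$ is a section of $[Y/G]\to\mathsf BG$ over $\mathsf BG\times S$, i.e.\ a $G$-equivariant map $S\to Y$ with $S$ carrying the \emph{trivial} action; for $Y=\Spec B$ this is $\Map_{\mathsf{CAlg}^{\mathsf BG}}(B,A^{\mathrm{triv}})\simeq\Map_{\mathsf{CAlg}}(\colim_{\mathsf BG}B,\,A)$, so the fixed locus is corepresented by a \emph{colimit} (coinvariants in the category of algebras), not by the homotopy invariants $B^{hG}$ of the underlying module. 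The latter computes functions on the quotient, not the fixed locus: for $\mathsf C_2$ swapping the two factors of $B=k\times k$ the fixed locus is empty while $\Spec(B^{hG})=\Spec(k)$ is not. The averaging argument therefore identifies the wrong object, and no amount of char-$0$ input repairs it. The fix is to invert the logical order, as the paper does: first establish $\mathsf T[-1]Y^g\simeq Y^{\mathbb R g}$ by the descent argument you already give (which nowhere uses schematicity of $Y^g$), then observe that $Y^{\mathbb R g}$ is a derived scheme by construction as a fiber product of derived schemes, hence so is $\mathsf T[-1]Y^g$, hence so is its closed subscheme $Y^g$ (the zero section); only after that may you apply \cref{thm:HKR_DM} to $Y^g$ to conclude $\loopstack Y^g\simeq\mathsf T[-1]Y^g$.
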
 
\begin{proof}
We give here a proof  that relies on our main result \Cref{thm:HKR_DM}. A direct proof would certainly be desirable. 
Let $r$ be the order of $g$, and let $G=\langle g\rangle$ be the cyclic group generated by $g$ acting naturally on $Y$.
By \Cref{thm:HKR_DM} and \Cref{prop:InertiaGlobalQuotient},
we have $$ \loopstack [Y/G] \simeq   \mathsf T[-1] \inertiaDM [Y/G] \simeq \mathsf T[-1] \bigsqcup_{k=1}^r [Y^{g^k}/G] \simeq  \bigsqcup_{k=1}^r [\mathsf T[-1] Y^{g^k}/G]$$ 

On the other hand, $$\loopstack [Y/G] \simeq [Y/G] \times_{[Y/G] \times [Y/G] } [Y/G]\simeq  \bigsqcup_{k=1}^r [Y^{\mathbb{R} g^k}/G].$$
Since the HKR isomorphism induces identity on the classical truncation, we obtain an equivalence 
\begin{equation}
    [\mathsf T[-1]Y^g/G]\simeq [Y^{\mathbb{R}g}/G].
\end{equation}
As $G$ is generated by $g$, hence the $G$-action on $Y^g$ is trivial. It follows that $\mathsf T[-1]Y^g\simeq Y^{\mathbb{R}g}$.

Since $Y^{\mathbb{R}g}$ is a derived algebraic space by construction, $\mathsf T[-1]Y^g$ is also a derived algebraic space. Therefore, $Y^g$, as a closed subscheme of $\mathsf T[-1]Y^g$, must be a derived scheme. Finally, applying \Cref{thm:HKR_DM} to $Y^g$, we obtain $\mathsf T[-1]Y^g\simeq \loopstack Y^g$.
\end{proof}

\subsection{Mapping stacks}
\label{subsec:InertiaOfMappingStacks} 

 Let us consider a pair  given by a stack $S$ and a derived DM stack $Y$ such  that  $\ X \coloneqq \bfMap(S, Y)$
is still a derived DM stack. Then we have the following simple formula for its derived orbifold inertia stack. 
\begin{lem}
\label{lem:inertiamapping}
Let $X$ be as above. Then there is a natural equivalence 
\begin{equation}
\label{inertiamappingstack}
\inertiaDM X  \simeq \bfMap(S, \inertiaDM Y).
\end{equation}
\end{lem}
\begin{proof}
By definition we have that 
$$
\inertiaDM X  = \colim_r \bfMap(\BCr, X) \simeq 
\colim_r \bfMap(\BCr,   \bfMap(S, Y) ).   
$$
Using adjunction we can rewrite the latter, as desired, as
$$  
 \bfMap(S, \colim_r \bfMap (\BCr,  Y) )
 \simeq 
\bfMap(S, \inertiaDM Y) \ .
$$ 
\end{proof}

There are many interesting  examples of derived DM stacks $X$  of this kind, such as for instance: 
\begin{enumerate}
\item We can take $S$ to be $S^1$.  
Then 
$X =  \loopstack Y$ 
is also a derived DM stack. More generally,   we can take $S$ to be the Betti stack of any  finite CW complex. 
\item We can take $S$ to be $ \mathsf B \mathbb{G}_a$. In this case   
$X = \bfMap(\mathsf B \mathbb{G}_a, Y) \simeq  \mathsf T[-1] Y $, which is still a derived DM stack.
\item 
Examples where the source $S$ is an algebraic curve can also be given, on condition of introducing stability conditions, or restricting to selected components of the full mapping stack (such as the sub-stack of almost constant maps considered in \cite{sibilla2023equivariant}). 
\end{enumerate}

In these cases, Lemma \ref{lem:inertiamapping} often  allows us   to compute explicitly the derived orbifold inertia stack of a \emph{derived} DM stack.  We illustrate this point with an example.  Let $Y = [V/G]$ be a global quotient DM stack, such that $V$ is smooth (hence underived) and $G$ is a finite group. We set $S=B \mathbb{G}_a$. Then we have that  
$ 
X \simeq \mathsf T[-1] [V/G]$ is equivalent to $
[\mathsf T[-1] V/G] 
$, 
where the action of $G$ on $\mathsf T[-1] V$ is induced from the $G$-action on $V$. 
Proposition \ref{prop:InertiaGlobalQuotient} gives a formula for  $\inertiaDM X$, but it requires to   calculate the derived fixed loci from first principles, a subtle task in the case of actions on derived schemes.  
Thanks to Lemma  \ref{lem:inertiamapping} 
  however we  can write 
$$
\inertiaDM X \simeq \mathsf T[-1] ( \inertiaDM [V/G] ) \simeq \mathsf T[-1] ( \mathsf I [V/G] ) \simeq \bigsqcup_{[g] \in G/G} \mathsf T[-1][V^g/Z(g)]
$$
where the second equivalence comes from the fact that the derived orbifold inertia of a smooth classical DM stack coincides with the classical inertia stack. So in particular we can describe   $\inertiaDM X$ in terms of the \emph{classical} fixed loci of the $G$-action $V^g$, which are readily computable.

\subsection{Beyond global quotients (I) : weighted projective lines and root stacks}
\label{sec:examplesbeyond}
Already in the category of underived classical stacks, most DM stacks are not of the form $[Y/G]$ with $G$ a finite group acting on an algebraic space $Y$. Indeed, by \cite[Proposition 6]{Prill}, any complex variety with only quotient singularities (i.e.~\textit{orbifold} in the classical sense as in \cite{Satake-orbifold}) admits a canonical smooth DM stack structure with only non-trivial stabilizers in codimension $\geq 2$, and certainly most varieties with quotient singularities are not quotients of smooth varieties by finite groups.

Such non-global quotient examples arise naturally.  Perhaps the best-known examples are Thurston's football and teardop, which also happen to be the examples that led historically to the  notion of  orbifold. 

\begin{eg}[Thurston's football and teardrop]
\label{eg:Football_Teardrop}
Given two positive integers $p,q$ that are coprime to each other,
let $X=\mathbb{P}^1(p,q)=\operatorname{Proj}(k[x,y])$ be the weighted projective line with $\deg(x)=p, \deg(y)=q$. The underlying orbifold is often referred to as \textit{Thurston's football} (or \textit{teardrop} when $q=1$); see the pictures below. By \cite[Proposition 5.1 and Corollary 5.9]{Kai-Noohi}, $X$ is \textit{not} of the form of a global quotient of a scheme by a finite group.

\begin{center}
\begin{tikzpicture}[>=Latex, line cap=round, line join=round, scale=1.1]

\tikzset{
  conepoint/.style={circle, inner sep=1.2pt, draw=black, fill=black},
  label/.style={font=\footnotesize}
}

\begin{scope}[xshift=-3cm]
  \draw[thick] (0,-1.8) .. controls (1.2,-0.9) and (1.2,0.9) .. (0,1.8)
               .. controls (-1.2,0.9) and (-1.2,-0.9) .. (0,-1.8);

  \coordinate (N) at (0,1.8);
  \coordinate (S) at (0,-1.8);
  \node[conepoint] at (N) {};
  \node[conepoint] at (S) {};

  \node[label] at ($(N)+(0,0.3)$) {$\mu_{p}$};
  \node[label] at ($(S)+(0,-0.3)$) {$\mu_{q}$};

  \node[font=\small] at (0,-2.5) {Football $\mathbb{P}^1(p,q)$};
\end{scope}

\begin{scope}[xshift=3cm]
  \draw[thick]
    (0,1.8)                
    .. controls (1.3,1.0) and (1.6,-1.0) .. (0,-1.0)  
    .. controls (-1.6,-1.0) and (-1.3,1.0) .. (0,1.8); 

  \coordinate (C) at (0,1.8);
  \node[conepoint] at (C) {};

  \node[label] at ($(C)+(0,0.3)$) {$\mu_{p}$};

  \node[font=\small] at (0,-2.5) {Teardrop $\mathbb{P}^1(p,1)$};
\end{scope}

\end{tikzpicture}
\end{center}
By \Cref{cor:InertiaDM-Smooth}, 
\begin{equation}
    \inertiaDM X\simeq \inertia X\simeq X\cup (\inertia(\mathsf{BC}_p\sqcup \mathsf{BC}_q))\simeq X \cup ([\mathsf{C}_p/\mathsf{C}_p]\sqcup [\mathsf{C}_q/\mathsf{C}_q]) \simeq X \sqcup   \bigsqcup^{p-1}\mathsf{BC}_p \sqcup \bigsqcup^{q-1}\mathsf{BC}_q
\end{equation}
Consequently, by \cref{thm:HKR-HH}, we have
$$\mathsf{HH}_0(X)\simeq H^0(X, \mathcal{O}_X)\oplus H^1(X, \mathbb L_X) \oplus H^0(\mathsf{BC}_p, \mathcal{O}_{\mathsf{BC}_p})^{p-1}\oplus H^0(\mathsf{BC}_q, \mathcal{O}_{\mathsf{BC}_q})^{q-1} \simeq k^{p+q},$$
and $\mathsf{HH}_i(X)=0$ for any $i\neq 0$ since $H^1(X,\mathcal{O}_X)=0$ and $H^0(X, \mathbb{L}_X)=0$.

Similarly, one can compute the Hochschild cohomology using \cref{cor:HochschildCohomology}: since $\omega_X|_{\mathsf {BC}_p}$ is $\chi_{\zeta_p}$, the 1-dimensional representation of $C_p$ given by $\zeta_{p}$, we have:
$$\mathsf{HH}^*(X) \simeq H^*(X, \mathcal{O}_X)\oplus H^*(X, \mathbb T_X[-1]) \oplus H^*(\mathsf{BC}_p, \chi_{\zeta_p^{-1}})^{p-1}\oplus H^*(\mathsf{BC}_q,  \chi_{\zeta_q^{-1}})^{q-1}.$$
As we are in characteristic zero, the last two summands vanish and $H^*(X, \mathcal{O}_X)\simeq H^*(\mathbb{P}^1, \mathcal{O})$. Hence,
\begin{align*}
        \mathsf{HH}^0(X)&\simeq H^0(X, \mathcal{O}_X)\simeq k\\
        \mathsf{HH}^1(X)&\simeq H^1(X, \mathcal{O}_X)\oplus H^0(X, \mathbb T_X)\simeq k\\
        \mathsf{HH}^2(X)&\simeq  H^1(X, \mathbb T_X) =0 \\
        \mathsf{HH}^3(X)& =0.
\end{align*}

\end{eg}

\begin{eg}[Weighted projective line]
    \label{eg:WeightedProjLine}
    A natural generalization of \Cref{eg:Football_Teardrop} is the so-called \textit{weighted projective line}, introduced by Geigle and Lenzing in \cite{Geigle-Lenzing_Weighted_Projective_Lines}. Given a collection of points $\underline{x}\coloneqq(x_1, \cdots, x_t)$ with $x_i\in \mathbb{P}^1$, and a collection of weights $\underline{p}\coloneqq(p_1, \cdots, p_t)$ with $p_i\in \mathbb{Z}_{\geq 2}$, the weighted projective line $X\coloneqq\mathbb{X}(\underline{x}, \underline{p})$, defined in \cite{Geigle-Lenzing_Weighted_Projective_Lines}, is obtained by attaching weight $p_i$ to the point $x_i$; we depict it as follows:

\begin{center}
\begin{tikzpicture}
  \draw[-] (-1,0) -- (6,0);

  \foreach \x/\above/\below in {
    0/{$x_1$}/{$\mu_{p_1}$},
    1/{$x_2$}/{$\mu_{p_2}$},
    5/{$x_t$}/{$\mu_{p_t}$}
  }{
    \fill (\x,0) circle (2pt);
    \node[above] at (\x,0) {\above};
    \node[below] at (\x,0) {\below};
  }
 \node[above] at (3,0) {$\cdots$};
\end{tikzpicture}

\vspace{0.5em}
{Weighted projective line}
\end{center}
As in \Cref{eg:Football_Teardrop}, by \Cref{cor:InertiaDM-Smooth}, we have 
\begin{equation}
     \inertiaDM X\simeq X \sqcup  \bigsqcup_{i=1}^t \bigsqcup^{p_i-1}\mathsf{BC}_{p_i}.
\end{equation}
Therefore, 
$$\mathsf{HH}_0(X)=H^0(X, \mathcal{O}_X)\oplus H^1(X, \mathbb L_X) \oplus \bigoplus_{i=1}^t H^0(\mathsf{BC}_{p_i}, \mathcal{O}_{\mathsf{BC}_{p_i}})^{\oplus (p_i-1)}=k^{(\sum_{i}p_i)-t+2},$$
and $\mathsf{HH}_i(X)=0$ for any $i\neq 0$.

Similar to \cref{eg:WeightedProjLine}, the Hochschild cohomology $X$ can be computed using  \cref{cor:HochschildCohomology} as follows:
\begin{equation}
\label{eqn:HHWPL}
    \mathsf{HH}^*(X) \simeq H^*(X, \mathcal{O}_X)\oplus H^*(X, \mathbb T_X[-1])=
    \begin{cases}
        k\oplus k^2[-1] &\quad \text{ if } t=1\\
        k \oplus k[-1]&\quad \text{ if } t=2\\
        k&\quad \text{ if } t=3\\
        k \oplus k^{t-3}[-2]&\quad \text{ if } t\geq 4\\
    \end{cases}
\end{equation}
\end{eg}
The computation \eqref{eqn:HHWPL} was previously obtained by Happel \cite{Happel-HHPiecewiseHereditaryAlg} using piecewise hereditary algebras. Our geometric method is apparently different, and can be easily adapted beyond the case of piecewise hereditary algebras, e.g.~to compute the Hochschild homology and cohomology of stacky curves (see \cite{Schremmer-HHofWPL-Masterthesis} for an indirect approach using \cite{Arinkin_Caldararu_Hablicsek}), or more generally, to root stacks, as we now turn to.

\begin{eg}[Root stacks]
\Cref{eg:Football_Teardrop} and \Cref{eg:WeightedProjLine} are one-dimensional instances of a general construction called \textit{root stack}. Root stacks are DM stacks carrying universal roots of Cartier divisors  (and more generally log structures) on schemes. These objects have been first introduced by Cadman \cite{cadman2007using}, and have been much studied since; see \cite{talpo2018infinite} and references therein for an overview of the subject. For concreteness, let us restrict to the case of the root stack of a scheme $X$  equipped with a single smooth Cartier divisor $D$. This is the same as the datum of a line bundle $\cO_X(D)$ on $X$ and a section $\sigma$, and is therefore classified by a morphism 
$$
X \to [\mathbb{A}^1/\mathbb{G}_m]
$$
The $n$-th root stack of the pair $(X,D)$ is denoted  $\sqrt[n]{(X, D)}$ and    is defined by the pullback square
\[ \xymatrix{\sqrt[n]{(X, D)} \ar[r] \ar[d]_-\pi & [\mathbb{A}^1/\mathbb{G}_m] \ar[d]^{(-)^n} \\
X \ar[r] & [\mathbb{A}^1/\mathbb{G}_m], }
\]
where the right vertical arrow is given by $z\mapsto z^n$. 
The stack $\sqrt[n]{(X, D)}$ can also be defined via a universal property. Indeed, the root stack is universal among stacks over $X$ carrying an $n$-th root of  $\cO(D)$ and of the section $\sigma$: we denote these universal roots by 
$$
\cO(D_n) \quad \text{and} \quad \sqrt[n]{\sigma} \in \cO(D_n)
$$
The zero-locus of $\sqrt[n]{\sigma}$ is the stacky divisor $D_n$, which fits in the following diagram
$$
\xymatrix{
D_n \ar[r] \ar[dr] & \widetilde{D_n} \ar[r] \ar[d] & \sqrt[n]{(X, D)} \ar[d] \\
& D \ar[r] & X
}
$$
The  square on the right is a fiber product; the divisor $D_n$
 is the reduction of $\widetilde{D_n}$, which is the pull-back of $D$ to $\sqrt[n]{(X, D)}$; the map $D_n\to D$ is a $\mu_n$-gerbe. If the line bundle $\cO_X(D)$ admits an $n$-th root, then $\sqrt[n]{(X, D)}$ can be presented as a global $\mu_n$-quotient but this fails in general.    The teardrop $\mathbb{P}^1(p,1)$ is equivalent to the root stack $\sqrt[p]{(X, D)}$ when $X=\mathbb{P}^1$ and $D$ is equal to a point. 
 
The inertia stack of $\sqrt[n]{(X, D)}$ can be computed as
\begin{equation}
\label{eqn:InertiaOfRootStack}
    \inertiaDM  \sqrt[n]{(X, D)}  \simeq \sqrt[n]{(X, D)} \sqcup \big ( \bigsqcup_{i=1}^{n-1} D_n \big ).
\end{equation}
Thanks to \Cref{thm:HKR_DM}, we obtain a decomposition
\begin{equation}
\label{eq:LoopOfRootStack}
    \loopstack  \sqrt[n]{(X, D)}   \simeq  \mathsf T[-1]   \sqrt[n]{(X, D)} 
  \sqcup \big ( \bigsqcup_{i=1}^{n-1} \mathsf T[-1]  D_n \big ).
\end{equation}

\begin{prop}
\label{prop:HHRootStack}
    Let $D$ be a smooth divisor on a smooth scheme $X$. Let $n$ be a positive integer. We have canonical isomorphisms:
    \begin{enumerate}
        \item $\mathsf \Gamma( \mathsf T[-1]   \sqrt[n]{(X, D)} , \mathcal{O}) \simeq \mathsf \Gamma(\mathsf T[-1]X , \mathcal{O})$.
        \item  $\mathsf \Gamma( \mathsf T[-1] D_n , \mathcal{O}) \simeq \mathsf \Gamma(\mathsf T[-1] D, \mathcal{O} )$.
    \end{enumerate}
\end{prop}
\begin{proof}

By the functoriality of the (shifted) tangent, $\sqrt[n]{(X, D)}\to X$ induces a morphism $\pi\colon \mathsf T[-1]  \sqrt[n]{(X, D)}\to \mathsf T[-1]  X$, thus a natural morphism
  \begin{equation}
\label{rootroot2}
  \mathcal{O}_{\mathsf T[-1]X}\to  \pi_*( \mathcal{O}_{ \mathsf T[-1]   \sqrt[n]{(X, D)} }) .
  \end{equation}
  To prove (1), it suffices to show that \eqref{rootroot2} is an isomorphism.
We will give two proofs.

Our first proof is via local computation: it suffices to check that \eqref{rootroot2} is an isomorphism \'etale locally on $X$. Hence we can assume that the pair $(X,D)$ is  of the form
$$
(X, D)= (\mathbb{A}^1 \times S, \{0 \} \times S ),
$$
where $S$ is a smooth variety of dimension $\dim(X)-1$. In this case, the root stack   
$\sqrt[n]{(X, D)}$ is isomorphic to a stack of the form
$$
\sqrt[n]{(\mathbb{A}^1, \{0\})} \times S \simeq [\mathbb{A}^1/\mu_n] \times S 
$$
where $\mu_n$ acts by multiplication of roots of unity.

As $\mathsf T[-1] ([\mathbb{A}^1/\mu_n]\times S) \simeq \mathsf T[-1] [\mathbb{A}^1/\mu_n]\times  \mathsf T[-1]  S$, we are reduced to the case where $$(X, D)= (\mathbb{A}^1, \{0\}) \ .$$ 
Note that
$$
\mathsf T[-1] [\mathbb{A}^1/\mu_n] \simeq 
[\mathsf T[-1] \mathbb{A}^1 / \mu_n]
$$
where $\mu_n$ acts diagonally.
Let $t$ be the variable of $\mathbb{A}^1$ and $z$ be the variable of $\mathbb{A}^1\coloneqq\mathbb{A}^1/\mu_n$; $z=t^n$. Then we conclude by the following computation:
\begin{align*}
    \mathsf\Gamma([\mathsf T[-1] \mathbb{A}^1 / \mu_n], \mathcal{O}) & \simeq \mathsf\Gamma(\mathbb{A}^1, \Sym(\mathbb{L}_{\mathbb{A}^1}[1]))^{\mu_n}\\
    & \simeq \mathsf\Gamma(\mathbb{A}^1, \mathcal{O})^{\mu_n} \oplus \mathsf\Gamma(\mathbb{A}^1, \mathbb{L}_{\mathbb{A}^1}[1])^{\mu_n} \\
    & \simeq k[t^n]\oplus k[t^n]\mathsf{d}(t^n)\\
    & \simeq k[z]\oplus k[z]\mathsf{d}z\\
    & \simeq \mathsf\Gamma(\mathbb{A}^1, \mathcal{O})  \oplus \mathsf\Gamma(\mathbb{A}^1, \mathbb{L}_{\mathbb{A}^1}[1]) \\
    & \simeq \mathsf\Gamma(\mathsf T[-1] \mathbb{A}^1 , \mathcal{O}).
\end{align*}


Now we  give an alternative proof  that (\ref{rootroot2}) is an equivalence. By the definition of the root stack, and the fact that the shifted tangent preserves fiber products, we obtain a pull-back square
\[ \xymatrix{\mathsf T[-1] \sqrt[n]{(X, D)} \ar[r] \ar[d]_-\pi & \mathsf T[-1]  [\mathbb{A}^1/\mathbb{G}_m] \ar[d]^{p} \\
\mathsf T[-1]X \ar[r] & \mathsf T[-1] [\mathbb{A}^1/\mathbb{G}_m] }
\]
By base change,   
it is sufficient to show that the push-forward along the  map 
$$
p: \mathsf T[-1] [\mathbb{A}^1/\mathbb{G}_m] \to \mathsf T[-1] [\mathbb{A}^1/\mathbb{G}_m]
$$
sends the structure sheaf to the structure sheaf.

It is not difficult to describe explicitly the stack $\mathsf T[-1] [\mathbb{A}^1/\mathbb{G}_m]$.  Indeed, if $Y$ is a scheme with an action of an algebraic  group $G$, with Lie algebra $\mathfrak{g}$, it is always possible to construct a global atlas for $T[-1][Y/G]$. Namely, let $\mathsf T[-1]_G Y$ be the following fiber product 
\[ \xymatrix{\mathsf T[-1]_G Y \ar[r] \ar[d] & \mathsf Y  \ar[d]  \\
Y \times \mathfrak{g} \ar[r] &  \mathsf T Y }
\]
where the right vertical arrow is the zero section, and the bottom arrow is given by the infinitesimal action. Then $\mathsf T[-1]_G Y$ carries a natural $G$-action, and we have an equivalence\footnote{This  is a  small variation on the well-known construction of the atlas  of  the loop space of a global quotient stack which is well documented in the literature, see  for instance Proposition 2.1.8 in \cite{chen2020equivariant}; the details of the construction in the shifted tangent case will be spelled out in the forthcoming \cite{Rychlewiczinpreparation}.} 
$$
[\mathsf T[-1]_G Y / G] \simeq \mathsf T[-1][Y/G]
$$

It is easy to work out this construction in the case of the multiplicative action of $\mathbb{G}_m$ on $\mathbb{A}^1$. 
Consider the variety 
$$
C = \{ xy = 0 \} \subset \mathbb{A}^1_x \times \mathbb{A}^1_y 
$$
and equip it with a $\mathbb{G}_m$-action which scales the $x$-direction with weight $1$, and acts trivially on the $y$-coordinate. One can see that 
$$
\mathsf T[-1]_{\mathbb{G}_m} \mathbb{A}^1 \simeq C \quad \text{and} \quad \mathsf T[-1][\mathbb{A}^1/\mathbb{G}_m] \simeq  [C/\mathbb{G}_m]
$$
In particular, let 
$$
\iota: \mathsf T[-1] B \mathbb{G}_m \simeq \mathbb{A}^1 \times \mathsf B \mathbb{G}_m \to \mathsf T[-1] [\mathbb{A}^1/\mathbb{G}_m]
$$
be the inclusion. Then the map 
$$
\iota^*: \mathcal{O}(\mathsf T[-1] [\mathbb{A}^1/\mathbb{G}_m]) \to \mathcal{O}(\mathsf T[-1] \mathsf B \mathbb{G}_m)
$$
is an equivalence.  
By applying again base change, we  reduce to prove that the derived push-forward along the map
$$
q=p|_{ \mathsf T[-1] \mathsf B \mathbb{G}_m }:  \mathsf T[-1] \mathsf B \mathbb{G}_m 
\simeq \mathbb{A}^1 \times \mathsf B \mathbb{G}_m 
\to \mathsf T[-1] \mathsf B \mathbb{G}_m \simeq \mathbb{A}^1 \times \mathsf B \mathbb{G}_m
$$
sends the structure sheaf to the structure sheaf. Note that $q$ is the map induced at the level shifted tangent bundles  by the  map 
$$
(-)^n: \mathsf B \mathbb{G}_m \to \mathsf B \mathbb{G}_m
$$
which is formally \'etale. Thus its differential, i.e. the induced map between the (shifted) tangents, is an equivalence. That is, $q$ is an equivalence when restricted to  the linear factor 
$$
\mathbb{A}^1 \simeq \mathsf T[-1]_* \mathsf B \mathbb{G}_m \subset \mathsf T[-1] \mathsf B \mathbb{G}_m 
\simeq \mathbb{A}^1 \times \mathsf B \mathbb{G}_m 
$$
This implies in particular that, as claimed, there is an equivalence 
$$
q_*( \mathcal{O}_{(\mathsf T[-1] \mathsf B \mathbb{G}_m )} ) \simeq \mathcal{O}_{(\mathsf T[-1] \mathsf B \mathbb{G}_m )}
$$
which is what we needed to show. 

To show (2), it suffices to notice that 
$$
\mathsf T[-1] D_n \to \mathsf T[-1] D
$$
is a $\mu_n$-gerbe hence the derived push-forwards sends the structure sheaf to structure sheaf. 
\end{proof}

Combining \eqref{eq:LoopOfRootStack} and  \Cref{prop:HHRootStack}, we obtain the following consequence:
\begin{cor}
\label{cor:HHRootStack}
Let $D$ be a smooth divisor in a smooth scheme $X$. Let $n$ be a positive integer. 
We have a canonical isomorphism 
\begin{equation}
    \mathsf{HH}_*( \sqrt[n]{(X, D)} ) \simeq H^*(X, \Sym(\mathbb{L}_X[1]))\oplus H^*(D, \Sym(\mathbb{L}_D[1]))^{\oplus n-1}.
\end{equation}
In other words, we have a natural isomorphism of Hochschild homology groups that is compatible with HKR decompositions:
\begin{equation}
\label{eqn:HHRootStack}
    \mathsf{HH}_*( \sqrt[n]{(X, D)} ) \simeq \mathsf{HH}_*( X ) \oplus  \bigoplus_{i=1}^{n-1} \mathsf{HH}_*( D ).
\end{equation}

\end{cor}
\begin{rem}
    The  decomposition \eqref{eqn:HHRootStack} of the Hochschild homology in \Cref{cor:HHRootStack}, regardless of the compatibility with HKR decompositions, was already known through non-commutative methods. Indeed, Hochschild homology being additive with respect to semi-orthogonal decompositions, \eqref{eqn:HHRootStack} follows from the natural semi-orthogonal decomposition of the category of perfect complexes on the root stack $\sqrt[n]{(X, D)}$:
    \begin{equation}
    \label{eqn:SOD}
        \Perf (\sqrt[n]{(X, D)}) \simeq \langle\Perf(X), \underbrace{ \Perf(D), \cdots, \Perf(D)}_{n-1 \text{ copies } } \rangle .
    \end{equation}
    See \cite{ishii2015special}, \cite{bergh2016geometricity}, \cite{bodzenta2024root} and \cite{scherotzke2020parabolic} for details.
\end{rem}

Note that computation of Hochschild cohomology is often much more challenging than that of Hochschild cohomology, since it is not functorial and not additive with respect to semi-orthogonal decompositions. Nevertheless, our main theorem also allows us to compute the Hochschild cohomology of root stacks. We stress that the following result is \textit{not} a consequence of \eqref{eqn:SOD}.

\begin{prop}[Hochschild cohomology of root stacks]
\label{prop:HHcohRootStack}
    Let $D$ be a smooth divisor on a smooth scheme $X$. Let $n$ be a positive integer. 
    \begin{equation}
    \label{eqn:HHcoh-1}
        \mathsf{HH}^*(\sqrt[n]{(X, D)}) \simeq \mathsf\Gamma(\sqrt[n]{(X, D)}, \Sym(\mathbb{T}_{\sqrt[n]{(X, D)}}[-1])) \simeq\mathsf \Gamma(\mathsf T^*[1]\sqrt[n]{(X, D)}, \mathcal{O}).
    \end{equation}
    Moreover, if $n=2$, 
    \begin{equation}
    \label{eqn:HHcoh-2}
        \mathsf{HH}^*(\sqrt{(X, D)}) \simeq \mathsf{Fib} (\mathsf\Gamma(X, \Sym(\mathbb{T}_X[-1]))  \to  \mathsf\Gamma(D, \Sym(\mathbb{T}_D(D)[-1])[-1])).
    \end{equation}
\end{prop}
\begin{proof}
    By \Cref{Hochcoh} and \eqref{eqn:InertiaOfRootStack}, we have a decomposition:
    \begin{equation}
        \mathsf{HH}^*(\sqrt[n]{(X, D)}) \simeq \mathsf\Gamma(\sqrt[n]{(X, D)}, \Sym(\mathbb{T}_{\sqrt[n]{(X, D)}}[-1])) \oplus \mathsf\Gamma(D_n, \Sym(\mathbb{T}_{D_n}[-1])\otimes \mathsf{N}[-1]),
    \end{equation}
    where $\mathsf N$ is the normal bundle of $D_n$ in $\sqrt[n]{(X, D)}$.
    
    To show \eqref{eqn:HHcoh-1}, it suffices to prove the vanishing of the sumand $\mathsf\Gamma(D_n, \Sym(\mathbb{T}_{D_n}[-1])\otimes \mathsf{N}[-1])$, for which it is enough to show that 
    $f_*(\Sym(\mathbb{T}_{D_n}[-1])\otimes \mathsf{N})=0$ on $D$, where $f\colon D_n\to D$ is the natural $\mu_n$-gerbe map. Since this vanishing can be checked \'etale locally on $D$, we are reduced to the case where the line bundle $\mathcal{O}_X(D)$ has an $n$-th root $L$. In this case, let $\pi\colon \widetilde{X}\to X$ be the $n$-fold cyclic cover of $X$ branched along $D$ and let $R$ be the ramification divisor, we have  $\sqrt[n]{(X, D)}\simeq [\widetilde{X}/\mu_n]$, $D_n\simeq [R/\mu_n]$ and $\pi$ restricts to an isomorphism $\phi\colon R\to D$.
    Therefore
    \begin{equation*}
        f_*(\Sym(\mathbb{T}_{D_n}[-1])\otimes \mathsf{N}) \simeq \phi_*(\Sym(\mathbb{T}_{R}[-1])\otimes \mathsf{N}_{R/\widetilde{X}})^{\mu_n}\simeq \Sym(\mathbb{T}_{D}[-1])\otimes (\phi_*\mathsf{N}_{R/\widetilde{X}})^{\mu_n}.
    \end{equation*}
    However, $\phi_*\mathsf{N}_{R/\widetilde{X}}\simeq L|_D$ and the $\mu_n$-action is by multiplication by roots of unity. Consequently, $\phi_*\mathsf{N}_{R/\widetilde{X}}^{\mu_n}=0$.

    Having proved \eqref{eqn:HHcoh-1}, in order to show \eqref{eqn:HHcoh-2}, it suffices to establish that when $n=2$, for any non-negative integer $p$, the following fiber sequence in $\QCoh(X)$ (it is in fact a short exact sequence of coherent sheaves on $X$) :
    \begin{equation}
    \label{eqn:SESTangentDoubleCover}
        0\to \pi_*\left(\bigwedge^p \mathbb{T}_{\sqrt{(X, D)}}\right) \to \bigwedge^p \mathbb{T}_X\to i_*\left(\bigwedge^{p-1} \mathbb{T}_D\right)\otimes \mathcal{O}_X(D)\to 0.
    \end{equation}
    where $i\colon D\to X$ is the closed immersion.
    Since all the morphisms in \eqref{eqn:SESTangentDoubleCover} are naturally defined, it suffices to check the exactness \'etale locally, hence we are reduced to the case where where the line bundle $\mathcal{O}_X(D)$ has an $n$-th root $L$. Let $\pi\colon \widetilde{X}\to X$ be the associated double cover and $\phi\colon R \to D$ be the isomorphism as before. A local computation yields for any $p\geq 1$ the following short exact sequence of coherent sheaves on $\widetilde{X}$:
    \begin{equation}
    \label{eqn:SESforOmega}
        0\to \pi^*\Omega^p_X \to \Omega^p_{\widetilde{X}} \to i'_*\Omega^{p-1}_R\otimes \mathcal{O}_{\widetilde{X}}(-R)\to 0 ,
    \end{equation}
    where $i'\colon R\to \widetilde{X}$ is the closed immersion.
    Applying the functor of derived dual to \eqref{eqn:SESforOmega} and using the Grothendieck--Serre duality, we get the following short exact sequence on $\widetilde{X}$
     \begin{equation}
         \label{eqn:SESforTangent}
        0\to \bigwedge^p \mathbb{T}_{\widetilde{X}} \to \pi^*\left(\bigwedge^p \mathbb{T}_X\right) \to i'_*\left(\bigwedge^{p-1} \mathbb{T}_R\right)\otimes \mathcal{O}_{\widetilde{X}}(2R)\to 0.
    \end{equation}
    Then \eqref{eqn:SESTangentDoubleCover} can be deduced from \eqref{eqn:SESforTangent} by applying $\pi_*$ and taking $\mu_2$-invariants.
\end{proof}

\end{eg}

\subsection{Beyond global quotients (II) : quotients by algebraic groups}
\label{subsec:QuotientByAlgGroups}
Although most DM stacks are not global quotients by finite groups, in practice, by working with quotient stacks by linear algebraic groups, we can already deal with a fairly large class of DM stacks. For instance, the weighted projective line $\mathbb{P}^1(p,q)$ in \Cref{eg:WeightedProjLine} is equivalent to the quotient stack $[\mathbb{A}^2/\mathbb{G}_m]$ where the 2-dimensional representation of $\mathbb{G}_m$ is of weight $p, q$. More generally, in \cite[Theorem 2.18]{Edidin-Hassett-Kresch-Vistoli}, it is shown that any smooth DM stack with trivial generic stabilizer is a quotient stack of the form $[Y/G]$ with $G$ a linear algebraic group acting on an algebraic space $Y$. Motivated by this consideration, we compute the inertia stack as well as the Hochschild homology of derived DM stacks that are quotient stacks by linear algebraic groups.

\begin{prop}
\label{prop:QuotientByAlgebraicGroup}
    Let $Y$ be a finitely presented derived \DM stack equipped with an action of a linear algebraic group $G$. Assume that the quotient stack $[Y/G]$ is a separated derived \DM stack. Then there are only finitely many conjugacy classes in $G$ with non-empty fixed loci and they are semi-simple; we denote this finite set by $\mathscr{C}$.   
    Then we have an isomorphism
    \begin{equation}
       \inertiaDM [Y/G] \simeq \bigsqcup_{[g] \in \mathscr{C}} [Y^g/Z(g)]\ ,
    \end{equation}
    where $Y^g$ is the derived fixed locus and $Z(g)$ is the centralizer of $g$ in $G$.\\
    Consequently, 
    \begin{equation}
        \loopstack [Y/G] \simeq \bigsqcup_{[g] \in \mathscr{C}} \mathsf T[-1][Y^{g}/Z(g)]\ .
    \end{equation}
\end{prop}

\begin{proof}
Since $X\coloneqq [Y/G]$ is a separated \DM stack and $G$ is affine, the morphism $Y\times G\to Y\times Y$ is a finite morphism. 
Therefore, any $[g]\in \mathscr{C}$ must be of finite order, hence semi-simple (as we are in characteristic zero).
By \Cref{thm:basics_inertia}, the classical truncation of $\inertiaDM X$ is nothing but $\inertia [\trunc(Y)/G]\simeq [\inertia_G\trunc(Y)/G]$, where $\inertia_G\trunc(Y)=\trunc(Y\times_{Y\times Y} (Y\times G))$ is the classical universal stabilizer (a.k.a.~the inertia scheme) which is finite over $Y$. 
By \cite[Lemma 2.10 and Proposition 2.11]{Edidin_Jarvis_Kimura} the set $\mathscr{C}$ is finite and consists of conjugacy classes of semi-simple elements. 
 
For any $[g]\in \mathscr{C}$, i.e.~$Y^g$ is non-empty. Let $r$ be the order of $g$.
In the commutative diagram,
\begin{equation}
\label{Diag:FixedLocus}
\begin{gathered}
\xymatrix{ Y^g  \ar[r] \ar[d]& \mathbf{Map}( \mathsf B \langle g \rangle,  [Y/ \langle g \rangle ]) \ar[d] \ar[r] &  \mathbf{Map}(\mathsf B \langle g \rangle,  [Y/ G  ]) \ar[d]\ \\
\Spec(k)  \ar[r]^-{\id} & \mathbf{Map}(  \mathsf B \langle g \rangle , \mathsf B\langle g \rangle ) \ar[r] & \mathbf{Map}(\mathsf B \langle g \rangle , \mathsf B G )}
\end{gathered}
\end{equation}
the left square is a pull-back square by definition, and the right square is a pull-back square since it is obtained by applying  
$\mathbf{Map}(\mathsf B \langle g \rangle,  -)$ to the following pull-back square:
\begin{equation*}
\begin{gathered}
\xymatrix{ [Y/ \langle g \rangle ] \ar[d] \ar[r] &    [Y/ G]    \ar[d]  \\
 \mathsf B\langle g \rangle \ar[r] &  \mathsf BG }
\end{gathered}
\end{equation*}
Note that the multiplication map $Z(g)\times \langle g\rangle\to G$ is a morphism of group by the definition of centralizer. This induces a morphism $\mathsf BZ(g)\to \mathbf{Map}(\mathsf B \langle g \rangle , \mathsf B G )$ through which the composition of the bottom row of \eqref{Diag:FixedLocus} factors. As a result, we have a pull-back square:
\begin{equation}
\label{diag:GlobalQuotientEtaleChart}
    \begin{gathered}
\xymatrix{ 
[Y^g/Z(g)]  \ar[r] \ar[d] &  \mathbf{Map}(\mathsf B \langle g \rangle,  [Y/ G ]) \simeq \inertiarth [Y/G] \ar[d]\ \\
\mathsf BZ(g)  \ar[r] & \mathbf{Map}(\mathsf B \langle g \rangle , \mathsf B G )
}
\end{gathered}
\end{equation}
We claim that the bottom arrow (hence also the top arrow) is \'etale. Indeed, 
\begin{equation*}
    \mathbb{L}_{\mathsf BZ(g)}\simeq \Lie(Z(g))^\vee[-1]\ ,
\end{equation*}
as the adjoint $Z(g)$-module, and
\begin{equation*}
    \mathbb{L}_{\mathbf{Map}(\mathsf B \langle g \rangle , \mathsf B G )}|_{\mathsf BZ(g)}\simeq \mathsf \Gamma (\mathsf B\langle g \rangle, \Lie(G)^\vee[-1])\simeq (\Lie(G)^g)^\vee[-1]
\end{equation*}
as $\mathsf B\langle g \rangle$ has no higher cohomology (we are in characteristic zero). Here $g$ acts on $\Lie(G)$ by conjugation.
Now the canonical isomorphism of $Z(g)$-modules $$\Lie(G)^g\simeq \Lie(Z(g))$$ identifies the above two cotangent complexes and we conclude that the top morphism in \eqref{diag:GlobalQuotientEtaleChart} is \'etale. 

When $[g]$ runs through the finite set $\mathscr{C}$, we get an \'etale morphism 
\begin{equation}
\label{eqn:InertiaOfQuotient}
    \bigsqcup_{[g]\in \mathscr{C}} [Y^g/Z(g)] \to \inertiaDM [Y/G].
\end{equation}
To check that it is an equivalence, it suffices to check that it induces an isomorphism on the classical truncation.
To this end, it is enough to observe that that the following morphism is an isomorphism (see also \cite[Lemma 2.3]{Kinjo_Multiplicative_dimensional_reduction}):
\begin{align*}
        \bigsqcup_{[g]\in \mathscr{C}} G\times^{Z(g)} \trunc(Y)^g &\to \inertia_G\trunc(Y)\\
        (h,y)&\mapsto (hgh^{-1}, hy)
\end{align*}
where $Z(g)$ acts on $G\times Y^g$ via $t.(h,y)\coloneqq(ht^{-1}, ty)$. 
The last assertion in the statement follows from \Cref{thm:HKR_DM}.
\end{proof}

\begin{cor}
\label{cor:HHofQuotientByAlgGp}
    Let $Y$ be a finitely presented derived algebraic space equipped with an action of a linear algebraic group $G$. Assume that the quotient stack $[Y/G]$ is a separated \DM stack. Then there is an isomorphism of graded algebras 
    \begin{equation*}
       \bigoplus_{i \in \mathbb{Z}} \HH_{-i}([Y/G])\simeq \bigoplus_{i\in \mathbb{Z}}\bigoplus_{[g]\in \mathscr{C}} \bigoplus_{q-p=i}H^q([Y^g/Z(g)], \Omega^p_{[Y^g/Z(g)]})\ ,
    \end{equation*}
     where $\Omega^p\coloneqq\bigwedge^p\mathbb{L}$ stands for the $p$-th term of the \emph{derived} de Rham complex, and $Y^g$ is the \emph{derived} fixed locus.
\end{cor}
\begin{proof}
It follows from the combination of \Cref{thm:HKR-HH} and \Cref{prop:QuotientByAlgebraicGroup}.
\end{proof}



\appendix

\section{Recollection on derived fixed points}
\label{Appendix:FixedPoints}

We collect in this appendix some basic facts about fixed points for group actions in the setting of derived algebraic geometry.
All the results are standard. See \cite[Appendix A]{AKLPR}.

\subsection{Weil restrictions}

Let $f \colon X \to Y$ be a morphism of derived stacks.
Fiber product along $f$ induces a functor
\[ f^\ast \colon (\dSt_k)_{/Y} \longrightarrow (\dSt_k)_{/X} \ , \]
which admits a left adjoint $f_!$ (given by composition with $f$).
Furthermore, since in an $\infty$-topos colimits are universal, $f^\ast$ admits a right adjoint $f_\ast$ as well.
Unraveling the definition, we see that for every $Z \to X$ and every $S \to Y$ one has
\[ \Map_{/Y}(S, f_\ast(Z)) \simeq \Map_{/X}(X \times_Y S, Z) \ . \]
Sometimes, $f_\ast(Z)$ is referred to as the \emph{Weil restriction of $Z$ along $f$} (see for instance \cite[\S 19.1.2]{SAG}).
In particular, one obtains the following:

\begin{prop}\label{prop:pushforward_as_mapping_stacks}
    Let $f \colon X \to Y$ be a morphism in $\dSt_k$.
    For every $Z \to X$, $f_\ast(Z) \in (\dSt_k)_{/Y}$ fits in the following pullback square:
    \[ \begin{tikzcd}
        f_\ast(Z) \arrow{r} \arrow{d} & \bfMap_{/Y}(X,Z) \arrow{d} \\
        Y \arrow{r} & \bfMap_{/Y}(X,X) \ ,
    \end{tikzcd} \]
    where $\bfMap_{/Y}$ denotes the internal mapping stack in $(\dSt_k)_{/Y}$, and where the bottom map corresponds to the identity map of $X \simeq X \times_Y Y$.
\end{prop}

\begin{proof}
    It follows unraveling the definitions.
    See also \cite[Proposition 19.1.2.2 \& Construction 19.1.2.3]{SAG}.
\end{proof}

In the special case where $Y = \Spec(k)$, we introduce the following special notation:

\begin{defin}
    Let $X \in \dSt_k$ be a derived stack.
    For $Z \in (\dSt_k)_{/X}$, we write
    \[ \mathbf{Sect}_X(Z) \coloneqq f_\ast(Z) \ , \]
    and we refer to $\mathbf{Sect}_X(Z)$ as the \emph{stack of sections of $Z$ over $X$}.
\end{defin}

\subsection{Groups and actions}

Let now $G \in \mathsf{Mon}_{\mathbb E_1}^{\mathsf{gp}}(\dSt_k)$.
Recall from \cite[Proposition 4.1.2.10]{HA}, that there is an equivalence
\[ \mathsf{Mon}_{\mathbb E_1}(\dSt_k) \simeq 1\textrm{-}\mathsf{Seg}_\ast(\dSt_k) \ , \]
where the right-hand side denotes the full subcategory of $\Fun(\mathbf \Delta\op, \dSt_k)$ spanned by simplicial objects satisfying the pointed $1$-Segal condition.
Concretely, $G$ is sent to the diagram
\begin{equation}\label{eq:groups_to_1_Segal}
    \begin{tikzcd}
        G^\bullet \coloneqq \cdots \arrow[shift left = 9pt]{r} \arrow[shift left = 3pt]{r} \arrow[shift right = 9pt]{r} \arrow[shift right = 3pt]{r} & G \times G \arrow{r} \arrow[shift left = 6pt]{r} \arrow[shift right = 6pt]{r} \arrow[shift left = 6pt]{l} \arrow{l} \arrow[shift right = 6pt]{l} & G \arrow[shift left = 3pt]{l} \arrow[shift right=3pt]{l} \arrow[shift left=3pt]{r} \arrow[shift right = 3pt]{r} & \Spec(k) \arrow{l}
    \end{tikzcd}
\end{equation}
See also \cite[\S II.1.1]{Porta_HDR} for a review of this equivalence.
We denote the colimit of \eqref{eq:groups_to_1_Segal} computed in $\dSt_k$ by $\mathsf BG$, and we refer to the canonical map $\Spec(k) \to \mathsf BG$ as the \emph{atlas}.
Since $G$ was taken to be a group and not just a monoid, $G^\bullet$ is a \emph{groupoid object} in $\dSt_k$ and not just a pointed $1$-Segal object.
In particular, since all groupoids in an $\infty$-topos are effective, we see that $G^\bullet$ is identified with the \v{C}ech nerve of the atlas map.
This construction yields an equivalence \cite[Theorem 5.2.6.15]{HA}
\[ \mathsf{Mon}_{\mathbb E_1}^{\mathsf{gp}}(\dSt_k) \simeq \dSt_\ast^{\geqslant 1} \ , \]
where we set $\dSt_\ast \coloneqq (\dSt_k)_{\Spec(k)/}$.

\medskip

We define the $\infty$-category of \emph{derived $G$-stacks} (or \emph{derived stacks equipped with a $G$-action}, as the $\infty$-category
\[ G\textrm{-}\dSt_k \coloneqq (\dSt_k)_{/\mathsf BG} \ . \]
Given a derived $G$-stack $\cX \to \mathsf BG$, write
\[ X \coloneqq \cX \times_{\mathsf BG} \Spec(k) \ , \]
where $\Spec(k) \to \mathsf BG$ is the atlas, and we refer to this as the \emph{derived stack underlying $\cX$}.

\begin{rem}
    Let $\check{\cC}^\bullet(G;X)$ be the \v{C}ech nerve of the canonical map $X \to \cX$.
    Then $\check{\cC}^\bullet(G;X)$ is a groupoid by \cite[Proposition 6.1.2.11]{HTT}.
    Furthermore:
    \begin{enumerate}\itemsep=0.2cm
        \item the geometric realization $|\check{\cC}_\bullet(G;X)|$ is canonically equivalent to $\cX$;
        \item the canonical morphism
        \[ \check{\cC}^\bullet(G;X) \longrightarrow G^\bullet \]
        satisfies the relative left $1$-Segal condition (see \cite[Definition II.1.24]{Porta_HDR}).
    \end{enumerate}
    In other words, $\check{\cC}^\bullet(G;X)$ equips $X$ with the structure of a left representation of $G$ (in the sense e.g.\ of \cite[Definition 4.2.2.2]{HA}).
    Combining property (1) with the fact that groupoids are effective in any $\infty$-topos, we deduce that vice-versa every left $G$-representation arises in this way.
\end{rem}

\begin{notation}
    If $X$ carries the structure of a left $G$-representation, encoded in a relative left $1$-Segal object $\mathsf R^\bullet(G;X) \to G^\bullet$, we write
    \[ [X/G] \coloneqq |\mathsf R^\bullet(G;X) | \in (\dSt_k)_{/\mathsf BG} \]
    for its geometric realization.
    The previous remark implies then that $\mathsf R^\bullet(G;X) \simeq \check{\cC}^\bullet(G;[X/G])$, and vice-versa that every $\cX \in (\dSt_k)_{/\mathsf BG}$ can be uniquely presented in this form.
\end{notation}

\begin{defin}
    Let $[X/G] \in G\textrm{-}\dSt_k$.
    Its derived fixed locus is defined by
    \[ X^G \coloneqq \mathbf{Sect}_{\mathsf BG}([X/G]) \ . \]
\end{defin}

We can give a more explicit formula for $X^G$ in terms of \v{C}ech descent over $\mathsf BG$.

\begin{lem}\label{lem:fixed_points_as_a_limit}
    Let $[X/G] \in G\textrm{-}\dSt_k$.
    Then
    \[ X^G \simeq \lim_{n \in \mathbf \Delta} \bfMap(G^n, X) \ . \]
\end{lem}

\begin{proof}
    Since $\dSt_k$ is an $\infty$-topos, we see that
    \[ (\dSt_k)_{/\mathsf BG} \simeq \lim_{[n] \in \mathbf \Delta} (\dSt_k)_{/G^n} \ . \]
    Write $p \colon \mathsf BG \to \Spec(k)$ and $p_n \colon G^n \to \Spec(k)$ for the structural maps.
    By \cite[Appendix B]{Porta_Yu_Higher_analytic}, we can therefore write
    \[ p_\ast([X/G]) \simeq \lim_{[n] \in \mathbf \Delta} p_{n,\ast}( G^n \times_{\mathsf BG} [X/G] ) \ .  \]
    Since $\check{\cC}^\bullet(G;[X/G]) \simeq G^\bullet \times_{\mathsf BG} [X/G] $, it follows that
    \[ G^n \times_{\mathsf BG} [X/G] \simeq G^n \times X \ . \]
    Then the formula for $p_{n,\ast}$ given in \cref{prop:pushforward_as_mapping_stacks} immediately implies that
    \[ p_{n,\ast}(G^n \times X) \simeq \bfMap(G^n, X) \ , \]
    whence the conclusion.
\end{proof}

We now provide a formula for the cotangent complex of $X^G$.

\begin{construction}
    Let $[X/G] \in G\textrm{-}\dSt_k$.
    We denote by $j \colon X^G \times \mathsf BG \to [X/G]$ the composition
    \[ X^G \times \mathsf BG \longrightarrow \bfMap(\mathsf BG, [X/G]) \times \mathsf BG \stackrel{\ev}{\longrightarrow} [X/G] \ , \]
    where the first map is induced by the upper horizontal morphism in the cartesian square provided by \cref{prop:pushforward_as_mapping_stacks}, and the second map is the evaluation.
    Notice that this is a morphism in $G\textrm{-}\dSt_k$.
    Pulling back along the atlas $\Spec(k) \to \mathsf BG$, we obtain a canonical morphism
    \[ \widetilde{\jmath} \colon X^G \longrightarrow X \ . \]
\end{construction}

\begin{lem}\label{lem:fixed_points_cotangent_complex}
    Write $\pi \colon X^G \times \mathsf BG \to X^G$ for the projection on the first factor.
    Assume that:
    \begin{enumerate}\itemsep=0.2cm
        \item $\mathsf BG$ is $\otimes$-universal and categorically perfect;

        \item $\mathsf BG$ admits a global cotangent complex and is infinitesimally cohesive;
        
        \item $[X/G]$ admits a global cotangent complex and is infinitesimally cohesive.
    \end{enumerate}
    Then $X^G$ admits a global cotangent complex given by the formula
    \[ \mathbb L_{X^G} \simeq \pi_+( j^\ast( \mathbb L_{[X/G]} ) ) \ . \]
    Furthermore, if in addition $G$ is a reductive algebraic group and the tor-amplitude of $\mathbb L_{[X/G]}$ is contained within $[a,b]$, then the same applies to $\mathbb L_{X^G}$.
\end{lem}

\begin{proof}
    The assumptions allow us to invoke \cref{prop:cotangent_complex_mapping_stack} to deduce that both $\bfMap(\mathsf BG, [X/G])$ and $\bfMap(\mathsf BG, \mathsf BG)$ admit a global cotangent complex.
    Thus, the same applies to the morphism $\bfMap(\mathsf BG, [X/G]) \to \bfMap(\mathsf BG, \mathsf BG)$.
    The first half of the lemma now follows from the base-change supplied by \cref{lem:adjointability_I}.
    The second half, is a consequence of reductivity, as in this case $\pi_+$ is $t$-exact.
\end{proof}

\begin{cor}\label{cor:fixed_points_smooth_or_reductive}
    Let $G$ be a reductive group and let $[X/G] \in G\textrm{-}\dSt_k$.
    Assume that $X$ is a smooth (resp.\ derived lci) higher Artin stack.
    Then $X^G$ is a smooth (resp.\ derived lci) higher Artin stack.
\end{cor}

\begin{proof}
    The fact that $X^G$ is again a higher Artin stack follows combining \cref{prop:pushforward_as_mapping_stacks} with \cite[Proposition 4.3.4 \& Theorem 5.11]{Halpern_Leistner_Preygel_Categorical_properness}.
    The second half of the conclusion now is implied directly by \cref{lem:fixed_points_cotangent_complex}.
\end{proof}

\begin{rem}
    After unraveling the definitions, we see that \cref{cor:fixed_points_smooth_or_reductive} improves the classical result of Graber and Pandharipande \cite[Proposition 1]{Graber_Pandharipande}.
    More specifically, if one assumes, in their notation, that the perfect obstruction theory of $X$ arises from a $G$-equivariant derived lci enhancement $\widetilde{G}$, then it follows from \cref{cor:fixed_points_smooth_or_reductive} that $\widetilde{X}^G$ is a derived lci enhancement of $X^G$, and from the formula supplied by \cref{lem:fixed_points_cotangent_complex} one verifies that the induced perfect obstruction theory on $X^G$ coincides with the one provided by \cite[Proposition 1]{Graber_Pandharipande}.
\end{rem}


\bibliographystyle{amsplain}
\bibliography{References.bib}

\end{document}